\DeclareMathOperator{\arccosh}{arccosh}
\newtheorem{definition}{Definition}[section]
\newtheorem{theorem}[definition]{Theorem}
\newtheorem{lemma}[definition]{Lemma}
\newtheorem{proposition}[definition]{Proposition}
\newtheorem{corollary}[definition]{Corollary}
\newtheorem{remark}[definition]{Remark}
\newtheorem{assumption}[definition]{Assumption}
\newtheorem{notation}[definition]{Notation}
\theoremstyle{definition}
\title{Anomalous Recurrence Properties of Markov Chains on Manifolds of Negative Curvature}
\author{John Armstrong\footnote{King's College London} \: and Tim King\footnote{This work was supported by the Engineering and Physical Sciences Research Council [EP/L015234/1]. The EPSRC Centre for Doctoral Training in Geometry and Number Theory (The London School of Geometry and Number Theory), University College London. The second author is also a member of King's College London and thanks the same for its support.}}
\begin{document}

\maketitle

\begin{abstract}
We present a recurrence-transience classification for discrete-time Markov chains on manifolds with negative curvature. Our classification depends only on geometric quantities associated to the increments of the chain, defined via the Riemannian exponential map. We deduce that there exist Markov chains on a large class of such manifolds which are both recurrent and have zero average drift at every point. We give an explicit example of such a chain on hyperbolic space of arbitrary dimension, and also on a stochastically incomplete manifold. We also prove that such recurrent chains cannot be uniformly elliptic, in contrast with the Euclidean case.
\end{abstract}

\section{Introduction}

It is a classical result \cite{ichihara1982curvature} that Brownian motion in hyperbolic space is transient in dimensions two and higher, in contrast to the Euclidean case \cite{kakutani1944131}, where it is recurrent in dimension two. In this paper, we study more general random walks on negatively curved manifolds. We focus our attention on cases where the process respects the geometry of the manifold. Specifically, we consider discrete-time Markov processes which have martingale-like properties. To define a martingale on a manifold, one needs some geometric structure. For our purposes, we will be interested in the processes where, in the chart induced by the Riemannian exponential map, each increment has zero mean. Such processes are called zero-drift processes.

One might anticipate that the qualitative long-term behaviour of a process, such as recurrence, will be determined by its drift properties alone. However, in Euclidean space this is known to be false. In \cite{georgiou2016anomalous}, the authors give examples of recurrent zero-drift chains in $\mathbb{R}^d$ for arbitrarily large fixed $d$, the increments of which have a finite covariance matrix at every point. They further obtain a recurrence-transience classification result of Lamperti type \cite{lamperti1960criteria}, requiring only local information obtained from the aforementioned covariance matrices. Moreover, they give examples of recurrent chains which are uniformly elliptic, meaning that for any fixed direction, there is a probability of at least $\epsilon$ that the chain will move a distance at least $\epsilon$ in that direction (see Section \ref{section:assumptions} for a precise definition).

Existing results on the recurrence and transience of Brownian motion on a manifold (see e.g. \cite[Theorem 4.4.12]{hsu2002stochastic} or \cite[Theorem 1.1]{shiozawa2017escape}) suggest that the class of recurrent chains in the negative curvature case is likely to be qualitatively different from that in the Euclidean case. A somewhat striking manifestation of this is the existence of \textit{stochastically incomplete} manifolds, where Brownian motion is not merely transient but explodes in finite time \cite{grigor1999analytic}. The qualitative difference motivates our consideration of what examples of the type found in \cite{georgiou2016anomalous} exist in the negative curvature case.

As in \cite{georgiou2016anomalous} we find a Lamperti-type criterion which, in certain situations, allows the use of local information derived from the increments (although not necessarily the covariance matrices) to decide whether a given manifold-valued Markov chain is recurrent. As a consequence, we give an example of a zero-drift, recurrent Markov chain on a stochastically incomplete manifold. In contrast to the Euclidean case, we deduce from our criterion that recurrent zero-drift walks cannot be uniformly elliptic. We also explain quantitatively the extent to which uniform ellipticity must fail, in terms of the asymptotic behaviour of the curvature of the manifold, if a zero-drift chain is to be recurrent. Another contrast we observe is that in Euclidean space it is possible to give a simple recurrence criterion using the growth of quantities calculated from the covariance matrices. We give an example (Proposition \ref{prop:stillnotrecurrent}) to show that the corresponding results do not hold in hyperbolic space for any polynomial growth condition.

The paper is constructed as follows. Section 2 gives a precise description of our model and states a recurrence-transience criterion for constant curvature manifolds. Section 3 proves this result, and Section \ref{section:nonconstantcurvature} explains how the result may still be applied even if the curvature is not constant. Section \ref{section:nonconfinement} gives a sufficient local condition for a chain to not be trapped in a finite region. Section \ref{section:eucvshyp} compares the Euclidean and hyperbolic cases in more detail, and Section \ref{section:examples} gives some examples.

\section{Model and Main Results}
\label{section:assumptions}
Throughout this paper, $M$ denotes a fixed $d$-dimensional Riemannian manifold and $X=(X_n)_{n \in \mathbb{N}}$ denotes a discrete-time time-homogeneous Markov chain with state space $M$ and underlying probability space $\Omega$. Measurability is defined via the Borel sigma algebra on $M$. We make the following assumptions on $M$; the reader unfamiliar with the differential geometric concepts below may consult (e.g.) \cite{carmo1992riemannian}.
\begin{assumption}
\label{assumptionnegativecurvature}
$M$ is complete, simply connected and of everywhere nonpositive sectional curvature.
\end{assumption}
To define the sectional curvature of $M$ at a point $p$, it is necessary to choose a plane $\Pi$ in the tangent space $T_p M$. Throughout, bounds on sectional curvature are assumed to hold for all possible choices of $p$ and $\Pi$. As explained in \cite[Lemma 2.1.4]{jost2012nonpositive}, Assumption \ref{assumptionnegativecurvature} implies that for every point $p \in M$, the exponential map $\exp_p: T_pM \rightarrow M$ is a diffeomorphism. Assumption \ref{assumptionnegativecurvature} also implies that if $p,q$ are distinct points in $M$, then there is a unique geodesic segment in $M$ joining $p$ and $q$. We define the distance $\text{Dist}_M(p,q)$ to be the Riemannian length of this geodesic segment.

We make the following assumptions on the chain. 

\begin{assumption}
\label{assumptiondtotmoments}
There exists $p>2$ and $B \in \mathbb{R}$ such that \[\mathbb{E}[\text{Dist}_M(X_{n},X_{n+1})^p \mid X_n=x] \leq B\] for all $x \in M$ and for some (equivalently for all) $n \in \mathbb{N}$.
\end{assumption}
\begin{assumption}
\label{assumptionnotconfined}
$\limsup_{n \rightarrow \infty} \text{Dist}_M (X_{n},x) = \infty$ almost surely for some (equivalently for all) $x \in M$.
\end{assumption}

One motivation behind these assumptions is that they disallow certain uninteresting cases. For example, without Assumption \ref{assumptiondtotmoments} we could give the chain a probability of $10^{-6}$ (say) of moving directly to some fixed point $p$ on every step, which would trivially give point-recurrence at $p$. The non-confinement assumption \ref{assumptionnotconfined}, despite being global in nature, is easier to check in practice than it might appear. In the Euclidean case, Proposition 2.1 of \cite{georgiou2016anomalous} gives some local conditions which imply Assumption \ref{assumptionnotconfined}. We give similar local criteria for non-confinement in Section \ref{section:nonconfinement}.

From now on, we assume that an origin $O \in M$ has been chosen. We define the \textit{radial distance process} for $X$ (with respect to $O$) to be
\begin{equation*}
    R_n := \text{Dist}_M(O,X_n)
\end{equation*}
We will show that, given our assumptions, $X$ must behave in one of two (a priori non-exhaustive) ways.
\begin{definition} 
Let $O \in M$ be a point. A Markov chain $X$ in $M$ is called: \\
(i) \textit{O-recurrent} if there is some constant $r_0$ such that $\liminf_{n \rightarrow \infty} R_n \leq r_0$ a.s. \\
(ii) \textit{O-transient} if $\lim R_n = \infty$ a.s.
\end{definition}


It is immediate from the triangle inequality that if a walk is $O$-transient for some $O$ then it is $O$-transient for every $O$. The usual definition of recurrence on manifolds requires that, if $N$ is \textit{any} open set in $M$, then it is almost surely the case that $X_n \in N$ infinitely often \cite{grigor1999analytic}. This is therefore a stronger condition than that the walk be $O$-recurrent for every $O \in M$. We are predominantly interested in the weaker condition because it allows us to avoid technicalities concerning irreducibility (note also that the state space $M$ is uncountable). Nevertheless, in the appendix we outline how, in applications, recurrence in the usual sense may be established.

Since the exponential map is a diffeomorphism at every point, we may define
\begin{equation}
    \label{eq:vdef}
    v_n := \exp^{-1}_{X_n}(X_{n+1})
\end{equation}
Under Assumption \ref{assumptionnegativecurvature}, the tangent bundle $TM:=\sqcup_{p \in M} T_p M$ is diffeomorphic to $M \times \mathbb{R}^d$, and so $(X_n,v_n)_{n \in \mathbb{N}}$ is a Markov chain with state space $TM$, with the property that $X_{n+1} = \exp_{X_n}(v_n)$ for all $n$. The process $(X_n,v_n)_{n \in \mathbb{N}}$ has appeared in the literature under the name \textit{geodesic random walk}. The benefit of introducing the geodesic random walk is that, even though $M$ is not Euclidean, we can make use of Euclidean techniques using the fact that $\mathbb{R}^d$ is a vector space, together with the diffeomorphism $TM \simeq M \times \mathbb{R}^d$. For example, Jorgensen \cite{jorgensen1975central} proves an invariance principle for the geodesic random walk by rescaling both time and the $v_n$ in an appropriate manner. More recently Kraaij, Redig, and Versendaal have considered the large deviations of such walks \cite{kraaij2019classical}.

\begin{definition}
A chain $X$ on $M$ is called \textit{zero drift} if
\[ \mathbb{E}[\exp^{-1}_{X_n}(X_{n+1}) \mid \mathcal{F}_n]=0 \]
almost surely for all $n \in \mathbb{N}$, where the conditional expectation is defined using the vector space structure of $T_{X_n}M$, and $\mathcal{F}_n = \sigma(X_1,X_2,\dots,X_n)$.
\end{definition}

 Zero drift chains on $M$ are closely related to the concept of martingales on $M$, and indeed the two are equivalent when $M=\mathbb{R}^n$. To say what a martingale is in the more general case, one must define the notion of conditional expectation on a nonlinear space. Different methods for doing this appear in the literature -- see e.g. \cite{sturm2002nonlinear} for a development of the theory of martingales on general negatively curved spaces. The theory is more involved than in the Euclidean case. For example, the tower law $\mathbb{E}[X_{n+m} \mid \mathcal{F}_n]=X_n$ for all $m \in \mathbb{N}$ does not automatically hold. Our notion of zero-drift chains has been considered before, albeit phrased in terms of barycentres \cite{emery1991barycentre}.

For a point $x \in M$, let $\langle \cdot,\cdot \rangle_x$ denote the inner product induced on $T_xM$. Recall that Assumption \ref{assumptionnegativecurvature} implies that, provided $x \neq O$, there is a unique unit speed geodesic $\gamma: \mathbb{R} \rightarrow M$ going through $O$ such that $\gamma(0)=x$. Consequently, there is a distinguished vector $e_{\text{rad}}(x) \in T_x M$ such that $\exp_x(t e_{\text{rad}})=\gamma(t)$ for all $t$. Since $\gamma$ has unit speed, $\langle e_{\text{rad}}, e_{\text{rad}} \rangle_x=1$. Define the random variables $d_{\text{rad}}^n$ and $d_{\text{tot}}^n$ by
\begin{equation}
    \label{eq:dtotdef}
    d_{\text{tot}}^n := \langle v_n, v_n \rangle_{X_n} = \text{Dist}_M(X_n,X_{n+1})
\end{equation}
\begin{equation}
    \label{eq:draddef}
    d_{\text{rad}}^n :=
    \begin{cases} 
    -\langle v_n, e_{\text{rad}}(X_n) \rangle_{X_n} & \text{ if } X_n \neq O \\
    d_{\text{tot}}^n & \text{ if } X_n=O
    \end{cases}
\end{equation}

Figure 1 shows the geometric interpretation of the objects defined above. Informally, one should consider $v_n$ to point towards the origin if $d_{\text{rad}}^n$ is negative, although the statements $d_{\text{rad}}^n<0$ and $\text{Dist}_M(X_{n+1},O) < \text{Dist}_M(X_n,O)$ are not equivalent.

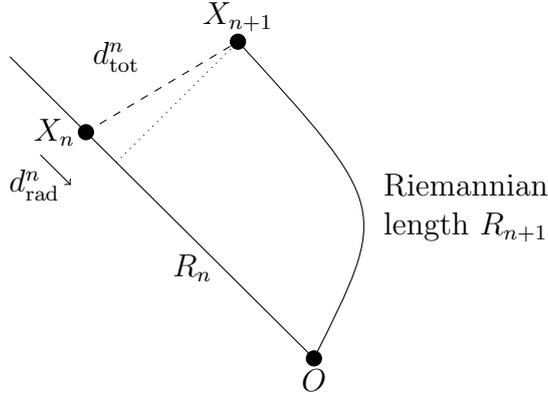
\begin{figure}[ht]
\label{figure:exponential}
\begin{center}
\begin{tikzpicture}
\draw (-4,4) -- (0,0);
\filldraw (0,0) circle (0.1cm) node[anchor = north] {$O$};
\filldraw (-3,3) circle (0.1cm) node[anchor = east] {$X_{n}$};
\filldraw (-1,4.2) circle (0.1cm) node[anchor = south] {$X_{n+1}$};
\draw[dashed] (-3,3) -- (-1,4.2);
\draw[dotted] (-2.6,2.6) -- (-1,4.2);
\draw[->] (-3.6,2.7) -- (-3.2,2.3) node[anchor = east]{$d_{\text{rad}}^n$};
\draw (0,0) .. controls (1,2)  .. (-1,4.2); 
\node[align=left] at (2,2) {Riemannian \\ length $R_{n+1}$};
\node[] at (-2.6,4) {$d_{\text{tot}}^n$};
\node[] at (-1.6,1.2) {$R_n$};
\end{tikzpicture}
\end{center}
\caption{Schematic showing $d_{\text{rad}}$ and $d_{\text{tot}}$ in the chart induced by the exponential map at $X_n$. In this example, $d_{\text{rad}}$ has negative sign. See Proposition \ref{prop:incrementexact} for a calculation of $R_{n+1}$.}
\end{figure}

\begin{notation}
\label{notation:markov} Throughout, we write expressions such as $\mathbb{E}_x[d_{\text{rad}}]$ to mean $\mathbb{E}[d^n_{rad} \mid X_n = x]$. This makes sense because, by the Markov and time-homogeneous properties of $X$, the latter expression depends only on $x$, not on $n$. We also write, for example, $\Delta R_n$ to mean $R_{n+1}-R_{n}$. We do not abbreviate expressions such as $\mathbb{E}[\Delta R_n \mid R_n=r]$ further because $R_n$ is not, in general, Markov. We also use $(\phi^n)_{n \in \mathbb{N}}$ as shorthand for the real-valued random variables defined by
\begin{equation}
    \label{eq:phidef}
    \phi^n = \begin{cases} 
    \frac{d^n_{\text{rad}}}{d^n_{\text{tot}}} & \text{ if } d^n_{\text{tot}} > 0 \\ 0 & \text{ if } d^n_{\text{tot}} = 0. \end{cases}
\end{equation}
Again, we sometimes omit the letter $n$ and just write $\phi$ where appropriate.
\end{notation}

Theorem \ref{thm:mainresultA}, stated below, is the most basic version of our main result; a recurrence-transience criterion for constant curvature manifolds. Although our main interest is in zero-drift chains, this result does not require the chain to be zero-drift. We stress that, in applications, this result may still be useful even if $M$ does not have constant curvature. See Section \ref{section:nonconstantcurvature}, and in particular Theorem \ref{thm:mainresultB}, for an analogue of Theorem \ref{thm:mainresultA} for pinched curvature manifolds.

\begin{theorem}
\label{thm:mainresultA}
Let $M$ be a manifold (with origin $O$) of constant curvature $-k^2$ for some $k>0$. Let $(X_n: n \in \mathbb{N})$ be a Markov chain in $M$. Assume that \ref{assumptiondtotmoments} and \ref{assumptionnotconfined} both hold. For $i=1,2$, let
\begin{equation*} 
\begin{split}
    S(r):= \{ x \in M : \text{Dist}_M(O,x)=r \} \\
\underline{\nu}_i(r)=\inf_{x \in S(r)} \frac{1}{k^i} \mathbb{E}_x[\log^i(\cosh(k d_{\text{tot}})+\phi \sinh(k d_{\text{tot}}))] \\
\overline{\nu}_i(r)=\sup_{x \in S(r)} \frac{1}{k^i} \mathbb{E}_x[\log^i(\cosh(k d_{\text{tot}})+\phi \sinh(k d_{\text{tot}}))] \\
\end{split}
\end{equation*}
where $\phi$ is as defined in Equation \eqref{eq:phidef}. Then \\
(i) Suppose that
\[ \limsup_{r \rightarrow \infty} \overline{\nu_2}(r)<\infty, \text{ and } \liminf(2r \underline{\nu}_1(r)-\overline{\nu}_2(r))>0  \]
Then $R_n \rightarrow \infty$ almost surely and the chain is $O$-transient. \\
(ii) Suppose instead that $\liminf_{r \rightarrow \infty} \underline{\nu_2}(r)>0$ and that there exist $r_0 \geq 0$, $\theta>0$ such that
\[ 2r\overline{\nu_1}(r) \leq \left(1+\frac{1-\theta}{\log r}\right)\underline{\nu_2}(r) \hspace{10pt} \forall r \geq r_0 \]
then there exists $b \geq 0$ such that $\liminf_{n \rightarrow \infty} R_n \leq b$ almost surely
and the chain is $O$-recurrent.
\end{theorem}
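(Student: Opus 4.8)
The plan is to reduce the long-run behaviour of $X$ to a one-dimensional Lamperti-type analysis of the (non-Markov) radial process $R_n$, driven by the exact description of $R_{n+1}$ given by the constant-curvature law of cosines (Proposition~\ref{prop:incrementexact}):
\[ \cosh(k R_{n+1}) = \cosh(k R_n)\cosh(k d_{\text{tot}}^n) + \phi^n\,\sinh(k R_n)\sinh(k d_{\text{tot}}^n). \]
Setting $Y_n := \tfrac1k\log\!\left(\cosh(k d_{\text{tot}}^n)+\phi^n\sinh(k d_{\text{tot}}^n)\right)$, a direct computation identifies the quantities in the statement as the first two conditional moments of $Y$: namely $\underline\nu_1(r)=\inf_{x\in S(r)}\mathbb{E}_x[Y]$ and $\overline\nu_1(r)=\sup_{x\in S(r)}\mathbb{E}_x[Y]$, with $\underline\nu_2,\overline\nu_2$ the corresponding extrema of $\mathbb{E}_x[Y^2]$. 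Because $|\phi^n|\le 1$ gives $e^{-k d_{\text{tot}}^n}\le\cosh(k d_{\text{tot}}^n)+\phi^n\sinh(k d_{\text{tot}}^n)\le e^{k d_{\text{tot}}^n}$, one has $|Y_n|\le d_{\text{tot}}^n$, so Assumption~\ref{assumptiondtotmoments} passes to a uniform bound on $\mathbb{E}_x[|Y|^p]$ with the same $p>2$.

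The first substantive step is to show that $\Delta R_n$ and $Y_n$ agree up to negligible error for large $R_n$. Factoring the identity above as
\[ \cosh(k R_{n+1}) = \tfrac12 e^{k(R_n+Y_n)} + \tfrac12 e^{-k R_n}\!\left(\cosh(k d_{\text{tot}}^n)-\phi^n\sinh(k d_{\text{tot}}^n)\right) \]
and inverting $\cosh$, I would write $\Delta R_n = Y_n+\xi_n$, where $|\xi_n|\lesssim e^{-2k R_n}e^{2k d_{\text{tot}}^n}$ when $d_{\text{tot}}^n$ is small relative to $R_n$. Since this exponential estimate is useless in expectation once $d_{\text{tot}}^n$ is large, I would pair it with the deterministic bounds $|\Delta R_n|\le d_{\text{tot}}^n$ and $|Y_n|\le d_{\text{tot}}^n$, split the expectation according to whether $d_{\text{tot}}^n$ exceeds $R_n/2$, and bound the large-increment tail by Chebyshev using the $p$-th moment. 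This yields $\mathbb{E}[\xi_n\mid\mathcal F_n]=O(R_n^{-(p-1)})$ and likewise for the contribution of $\xi_n$ to the second moment, so that the conditional drift and second moment of $R_n$ equal $\mathbb{E}_{X_n}[Y]$ and $\mathbb{E}_{X_n}[Y^2]$ respectively, up to terms that are $o(R_n^{-1}(\log R_n)^{-1})$; these are exactly controlled by the margin $p>2$. I expect this error analysis to be the main obstacle, because the surviving Lamperti signal in part~(ii) is extremely thin and the errors must be shown to be of strictly smaller order.

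Given these estimates, both conclusions follow from Lyapunov arguments for $R_n$ off a compact set. For transience~(i), test with $f(r)=r^{-\alpha}$ for small $\alpha>0$; the second-order expansion gives
\[ \mathbb{E}[f(R_{n+1})-f(R_n)\mid\mathcal F_n]\approx -\tfrac{\alpha}{2}R_n^{-\alpha-2}\bigl(2R_n\mu_1-(\alpha+1)\mu_2\bigr), \]
where $\mu_1,\mu_2$ are the conditional moments of $\Delta R_n$. The hypotheses $\limsup\overline\nu_2<\infty$ and $\liminf(2r\underline\nu_1-\overline\nu_2)>0$ give $2R_n\mu_1-\mu_2\ge\varepsilon$ with $\mu_2$ bounded, so for $\alpha$ small enough $f(R_n)$ is a positive supermartingale for large $R_n$; it converges almost surely, and non-confinement (Assumption~\ref{assumptionnotconfined}) forces its limit to be $0$, i.e.\ $R_n\to\infty$.

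For recurrence~(ii), test with $g(r)=(\log r)^{\theta'}$ for some $0<\theta'<\theta$; the analogous expansion yields a drift proportional to
\[ \tfrac{\theta'(\log R_n)^{\theta'-2}}{2R_n^2}\Bigl((\log R_n)(2R_n\mu_1-\mu_2)+(\theta'-1)\mu_2\Bigr). \]
The hypothesis $2r\overline\nu_1\le\bigl(1+\tfrac{1-\theta}{\log r}\bigr)\underline\nu_2$ forces $(\log R_n)(2R_n\mu_1-\mu_2)\le(1-\theta)\mu_2$, so the bracket is at most $(\theta'-\theta)\mu_2\le-(\theta-\theta')\underline\nu_2$, which is strictly negative once $\liminf\underline\nu_2>0$ is invoked; this negative signal, of order $(\log R_n)^{\theta'-2}R_n^{-2}$, dominates both the increment error and the Taylor remainder (the latter split at $|\Delta R_n|\le R_n/2$ and estimated beyond via the $p>2$ moment). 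Hence $g(R_n)$ is a positive supermartingale off a compact set; were $R_n$ to stay above a level $b$ forever it would converge to a finite limit, contradicting non-confinement, so $R_n\le b$ infinitely often and, iterating by the strong Markov property, $\liminf_n R_n\le b$ almost surely.
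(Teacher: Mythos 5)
Your proposal's technical core coincides with the paper's own proof. The reduction of $\Delta R_n$ to $Y_n=\tfrac1k\log(\cosh(kd_{\text{tot}})+\phi\sinh(kd_{\text{tot}}))$, the factorisation of the law of cosines, the truncation at the event $\{d_{\text{tot}}\le R_n/2\}$, and the $p$-th moment bound on the tail are exactly the content of the paper's Proposition~\ref{prop:incrementapproximation}, with the same error rates $O(R^{1-p})$ and $O(R^{2-p})$. Where you diverge is at the final step: the paper does not re-prove the Lamperti criterion; it quotes it as a black box (Theorem~\ref{lyapunov}, from Menshikov--Popov--Wade) and merely checks that the hypotheses on $\nu$ survive perturbation by $Cr^{i-p}$, using $\liminf\underline{\nu}_2>0$ so that the slack $\theta'\underline{\nu}_2/\log r$ dominates $r^{2-p}$. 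You instead re-derive the criterion with the Lyapunov functions $r^{-\alpha}$ and $(\log r)^{\theta'}$, $0<\theta'<\theta$. Your route is more self-contained; the paper's is shorter and outsources the delicate Taylor-remainder and truncation estimates for the test functions. One imprecision to note: your claim that both moment errors are $o(R_n^{-1}(\log R_n)^{-1})$ is false for the second moment when $p<3$ (it is only $O(R^{2-p})$); what actually saves the argument is that, inside the Lyapunov drift, both error contributions are $O((\log R)^{\theta'-1}R^{-p})$, which is dominated by the signal of order $(\log R)^{\theta'-2}R^{-2}$ precisely because $p>2$.

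There is one genuine gap, in your transience argument. From ``$f(R_n)=R_n^{-\alpha}$ is a positive supermartingale for large $R_n$'' you conclude ``it converges almost surely''; this does not follow, since the supermartingale property fails on a compact set which the walk could a priori visit infinitely often, in which case $f(R_n)$ may oscillate. The standard repair (and the substance of the cited theorem) is: stop the process at the entry time $\sigma_A$ of $\{r\le A\}$, so that $f(R_{n\wedge\sigma_A})$ is a genuine nonnegative supermartingale; the maximal inequality then gives the hitting estimate $\mathbb{P}_x[\sigma_A<\infty]\le (A/x)^{\alpha}$; combining this with Assumption~\ref{assumptionnotconfined} (the walk exceeds levels $x_j=Ae^{j}$ at a.s.\ finite times) and Borel--Cantelli shows that almost surely the walk visits $\{r\le A\}$ only finitely often; only then, on the tail where the walk stays above $A$, does supermartingale convergence plus non-confinement force $f(R_n)\to 0$, i.e.\ $R_n\to\infty$. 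Your recurrence argument does not suffer from this problem: there, the event ``$R_n>b$ for all $n\ge m$'' already puts you on the stopped supermartingale, and the contradiction with $\limsup_n R_n=\infty$ directly kills that event for every $m$, which is all that $\liminf_n R_n\le b$ requires.
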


Recall from \cite{georgiou2016anomalous} that a chain $(X_n)_{n \in \mathbb{N}}$ is called \textit{uniformly elliptic} if, for some $\epsilon>0$,
\begin{equation}
\label{eq:uniformlyelliptic}
    \mathbb{P}[\langle \exp^{-1}_{X_n}(X_{n+1}), w \rangle \geq \epsilon] \geq \epsilon
\end{equation}
for all unit vectors $w \in T_{X_n}M$. In Euclidean space, zero-drift recurrent uniformly elliptic chains exist. By contrast, in Section \ref{section:eucvshyp}, we derive the following consequence of Theorem \ref{thm:mainresultA}. 

\begin{theorem}
\label{corol:uniformlyelliptic}
Let $M$ satisfy Assumption \ref{assumptionnegativecurvature} and assume in addition that there exists $k>0$ such that the sectional curvature of $M$ is at most $-k^2$ at every point. Let $X$ be a Markov chain on $M$ satisfying Assumptions \ref{assumptiondtotmoments} and \ref{assumptionnotconfined}. If $X$ is uniformly elliptic and of zero drift, then $X$ is transient.
\end{theorem}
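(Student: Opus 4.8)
The plan is to derive Theorem~\ref{corol:uniformlyelliptic} as a consequence of the transience half (part (i)) of Theorem~\ref{thm:mainresultA}. Since $M$ has sectional curvature at most $-k^2$ everywhere, the natural strategy is to compare $M$ with the model space of constant curvature $-k^2$. The quantities $\underline{\nu}_1$ and $\overline{\nu}_2$ appearing in Theorem~\ref{thm:mainresultA} are built from $\mathbb{E}_x[\log^i(\cosh(k d_{\text{tot}}) + \phi \sinh(k d_{\text{tot}}))]$, so first I would invoke Theorem~\ref{thm:mainresultB} (the pinched-curvature analogue promised in Section~\ref{section:nonconstantcurvature}) to reduce to verifying the transience hypothesis of part~(i) with $k$ playing the role of the upper curvature bound. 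The heart of the matter is therefore to show that a zero-drift, uniformly elliptic chain forces
\[ \liminf_{r \to \infty}\bigl(2r\,\underline{\nu}_1(r) - \overline{\nu}_2(r)\bigr) > 0, \qquad \limsup_{r\to\infty}\overline{\nu}_2(r) < \infty. \]

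**The key estimates** come from analysing the function $g(t) := \log(\cosh(kt) + \phi\sinh(kt))$ as a function of the increment. First I would record that $\overline{\nu}_2(r)$ stays bounded: since $|\phi| \le 1$ we have $0 \le \cosh(kt) + \phi\sinh(kt) \le e^{k|t|}$, so $\log^2(\cosh(k d_{\text{tot}}) + \phi\sinh(k d_{\text{tot}})) \le k^2 d_{\text{tot}}^2$, and Assumption~\ref{assumptiondtotmoments} (with $p>2$) bounds $\mathbb{E}_x[d_{\text{tot}}^2]$ uniformly; dividing by $k^2$ gives a uniform bound on $\overline{\nu}_2(r)$, in particular the $\limsup$ is finite. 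The crux is the lower bound on $\underline{\nu}_1(r)$. I would Taylor-expand $g$ in the increment: writing $c = kd_{\text{tot}}$, one has $\log(\cosh c + \phi \sinh c) = \phi c + \tfrac{1}{2}(1-\phi^2)c^2 + O(c^3)$. Because $d_{\text{rad}} = \phi\, d_{\text{tot}}$, the leading term is $k\,d_{\text{rad}}$, whose conditional expectation relates to the drift; under the zero-drift hypothesis the first-order contribution must vanish in the appropriate sense, leaving the second-order term $\tfrac{1}{2}(1-\phi^2)k^2 d_{\text{tot}}^2 = \tfrac12 k^2(d_{\text{tot}}^2 - d_{\text{rad}}^2)$, which is nonnegative and measures the transverse (non-radial) part of the increment.

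**Uniform ellipticity enters** precisely to bound this transverse part away from zero. The defining inequality \eqref{eq:uniformlyelliptic}, applied to a unit vector $w$ in $T_{X_n}M$ orthogonal to $e_{\text{rad}}(X_n)$, guarantees that with probability at least $\epsilon$ the increment has a transverse component of size at least $\epsilon$, so that $\mathbb{E}_x[d_{\text{tot}}^2 - d_{\text{rad}}^2] \ge \epsilon^3$ uniformly in $x$ (away from $O$). Consequently $\underline{\nu}_1(r) \ge \tfrac12 \epsilon^3 + o(1)$ is bounded below by a positive constant, independent of $r$. Multiplying by $2r$ then makes $2r\,\underline{\nu}_1(r) \to \infty$, so it dominates the bounded quantity $\overline{\nu}_2(r)$ and the $\liminf$ is $+\infty > 0$. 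Both hypotheses of part~(i) hold, and Theorem~\ref{thm:mainresultA} (via its pinched analogue) yields transience.

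**The main obstacle** I anticipate is making the Taylor expansion of $g$ rigorous as a statement about conditional expectations rather than pointwise in $t$: the cubic and higher remainder terms must be controlled uniformly so that the claimed lower bound $\underline{\nu}_1(r) \ge \tfrac12\epsilon^3 + o(1)$ genuinely holds, and here the extra integrability margin $p>2$ in Assumption~\ref{assumptiondtotmoments} is what lets me dominate the error terms $O(d_{\text{tot}}^3)$ by the second moment uniformly. A secondary subtlety is the careful bookkeeping of the zero-drift condition: the drift is defined via the exponential chart, so I must verify that $\mathbb{E}_x[d_{\text{rad}}] = -\mathbb{E}_x[\langle v_n, e_{\text{rad}}\rangle]$ is exactly the radial component of the (vanishing) drift vector, ensuring the first-order term contributes nothing to the lower bound. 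Once these uniform estimates are in place, the deduction from Theorem~\ref{thm:mainresultA} is immediate.
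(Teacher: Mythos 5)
Your outline matches the paper's route at the top level: reduce to the transience criterion (Theorem~\ref{thm:mainresultB}(i)), use zero drift to kill $\mathbb{E}_x[d_{\text{rad}}]$, and use uniform ellipticity in the transverse directions to get $\mathbb{E}_x[d_{\text{tot}}^2 - d_{\text{rad}}^2] \geq \epsilon^3$ uniformly --- this last step is exactly the paper's proof of Theorem~\ref{corol:uniformlyelliptic}, which then invokes the paper's Theorem~\ref{thm:ellipticimpliestransient}. However, the step you flag as ``the main obstacle'' --- converting the transverse second moment into a positive lower bound on $\underline{\nu}_1(r)$ --- genuinely fails as you propose to carry it out. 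The Taylor remainder $R(c) := \log(\cosh c + \phi \sinh c) - \phi c - \tfrac12(1-\phi^2)c^2$ is $O(c^3)$ only near $c=0$. Since $\log(\cosh c + \phi\sinh c)$ grows only \emph{linearly} as $c \to \infty$ (it is $c + \log\tfrac{1+\phi}{2} + o(1)$ for $\phi > -1$), for large $c$ one has $R(c) \sim -\tfrac12(1-\phi^2)c^2$: the remainder is negative and of exactly the same quadratic order as the main term you retain. Consequently no moment bound alone can make $\mathbb{E}_x[R(k d_{\text{tot}})]$ small compared with $\tfrac{k^2}{2}\mathbb{E}_x[(1-\phi^2)d_{\text{tot}}^2]$: if the transverse variance is carried by large increments, the two nearly cancel (the true contribution of a purely transverse jump of size $T$ is $\log\cosh(kT) \approx kT$, not $\tfrac12 k^2 T^2$). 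Your appeal to the margin $p>2$ does not repair this: $p$ may be smaller than $3$, so $\mathbb{E}_x[d_{\text{tot}}^3]$ need not be finite, and interpolating only yields a \emph{bounded} remainder, not one that is $o(1)$ relative to $\epsilon^3$.

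The fix is what the paper does in Lemma~\ref{lemma:fgrowthestimate} and Theorem~\ref{thm:ellipticimpliestransient}: replace the Taylor expansion by the global pointwise inequality $F(k,d_{\text{rad}},d_{\text{tot}}) \geq d_{\text{rad}} + J_{min}(k,d_{\text{tot}})\,(d_{\text{tot}}^2 - d_{\text{rad}}^2)$, in which the coefficient $J_{min}$ is positive for all increment sizes but decays like $1/(2d_{\text{tot}})$, correctly reflecting the linear rather than quadratic growth of the logarithm for large jumps. One then truncates on the event $\{d_{\text{tot}} < A\}$, where $J_{min}$ is bounded below by a constant of order $1/A$, and uses H\"{o}lder's inequality, Markov's inequality and Assumption~\ref{assumptiondtotmoments} to show $\mathbb{E}_x[d_{\text{tot}}^2 1_{d_{\text{tot}} > A}] \leq \epsilon/2$ for $A$ large, so that at least half of the ellipticity lower bound survives the truncation. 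This gives a uniform positive lower bound on $\inf_{x \in S(r)}\frac{1}{k}\mathbb{E}_x[\log(\cosh(kd_{\text{tot}})+\phi\sinh(kd_{\text{tot}}))]$, and transience follows from Theorem~\ref{thm:mainresultB}(i) exactly as you intend. So your skeleton is the same as the paper's, but the truncation-plus-tail-estimate step is unavoidable and is missing from your argument; without it the claimed bound $\underline{\nu}_1(r) \geq \tfrac12\epsilon^3 + o(1)$ is not justified.
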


\section{Geometric Estimates and Proof of Theorem \ref{thm:mainresultA}}
\label{section:geometricestimates}
Let $M$ be a manifold with origin $O \in M$. Let $(X_n: n \in \mathbb{N})$ be a Markov chain in $M$, and let $\mathcal{F}_n=\sigma(X_1,\dots,X_n)$. Then $R_n$ is adapted to $\mathcal{F}_n$. Our strategy for proving Theorem \ref{thm:mainresultA} is to estimate $\mathbb{E}[(\Delta R_n)^i \: | \: \mathcal{F}_n]$ for $i=1,2$, and then use the following Lamperti-type result, found in \cite[Chapter 3, pp.114-115]{menshikov2016non}. (Recall from Section \ref{section:assumptions} that $R_n:=\text{Dist}_M(O,X_n)$ and that $\Delta R_n := R_{n+1}-R_{n}$.)

\begin{theorem} \label{lyapunov} Let $(R_n: n \in \mathbb{N})$ be a stochastic process adapted to some filtration $(\mathcal{F}_n: n \in \mathbb{N})$ taking values in $\mathbb{R}_{\geq 0}$. Assume that $\limsup_{n \rightarrow \infty} R_n = \infty$ a.s. and that there exist $p>2$, $B>0$ such that $\mathbb{E}[(\Delta R_n)^p \mid \mathcal{F}_n] \leq B$ a.s. for all $n$. Suppose that we are given, for $i=1$ and $i=2$, Borel functions $\underline{\mu}_i(r), \overline{\mu}_i(r): \mathbb{R}_{\geq 0} \rightarrow \mathbb{R}$ such that
\begin{equation}
\label{eq:mubounds}
    \underline{\mu}_i(R_n) \leq \mathbb{E}[(\Delta R_n)^i \: | \: \mathcal{F}_n] \leq \overline{\mu}_i(R_n)
\end{equation}
almost surely for all $n$. Then\\
(i) `Transience': Suppose that
\[ \limsup_{r \rightarrow \infty} \overline{\mu_2}(r)<\infty, \text{ and } \liminf_{r \rightarrow \infty} (2r \underline{\mu}_1(r)-\overline{\mu}_2(r))>0  \]
Then $R_n \rightarrow \infty$ almost surely. \\
(ii) `Recurrence': Suppose instead that $\liminf_{r \rightarrow \infty} \underline{\mu_2}(r)>0$ and that there exist $r_0 \geq 0$, $\eta>0$ such that
\[ 2r\overline{\mu_1}(r) \leq \left(1+\frac{1-\eta}{\log r}\right)\underline{\mu_2}(r) \hspace{10pt} \forall r \geq r_0 \]
then there exists $b \geq 0$ such that $\liminf_{n \rightarrow \infty} R_n \leq b$ almost surely.
\end{theorem}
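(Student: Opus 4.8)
The plan is to use the Lyapunov (Foster-type) function method: for each part I would construct a real-valued test function $f$ such that $f(R_n)$, suitably stopped, is a nonnegative supermartingale whose behaviour at infinity forces the desired dichotomy. The engine in both cases is a second-order Taylor expansion
\[ f(R_{n+1}) - f(R_n) = f'(R_n)\,\Delta R_n + \tfrac12 f''(R_n)(\Delta R_n)^2 + \mathcal{R}_n, \]
whose conditional expectation I would compare against the hypotheses \eqref{eq:mubounds} on $\mathbb{E}[\Delta R_n \mid \mathcal{F}_n]$ and $\mathbb{E}[(\Delta R_n)^2 \mid \mathcal{F}_n]$, writing $\mu_1, \mu_2$ for these conditional moments.

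For transience (part i), I would take $f(r) = r^{-\delta}$ with $\delta > 0$ small. Then $f' = -\delta r^{-\delta-1}$ and $f'' = \delta(\delta+1)r^{-\delta-2}$, so the leading contribution to $\mathbb{E}[\Delta f(R_n)\mid\mathcal{F}_n]$ is
\[ \tfrac12\delta r^{-\delta-2}\bigl[-(2r\mu_1 - \mu_2) + \delta\mu_2\bigr]. \]
The hypothesis $\liminf(2r\underline{\mu}_1 - \overline{\mu}_2) > 0$ together with the uniform bound $\overline{\mu}_2 \leq C$ coming from $\limsup\overline{\mu}_2<\infty$ makes the bracket strictly negative once $\delta$ is small enough, so $f(R_n)$ is a supermartingale outside a bounded set. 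Since $f$ is bounded, positive and decreasing to $0$, an optional-stopping argument bounds the probability of return to $[0,K]$ from a large starting radius by $f(R_0)/\inf_{[0,K]}f$, which tends to $0$; combined with $\limsup R_n = \infty$ and a Borel--Cantelli / tail argument this upgrades to $R_n\to\infty$ a.s.

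For recurrence (part ii), the relevant choice is $f(r) = (\log r)^{\eta}$ for $r$ beyond a fixed level, calibrated exactly to the logarithmic correction in the hypothesis. A direct computation gives leading term
\[ \frac{\eta(\log r)^{\eta-2}}{2r^2}\Bigl[(\log r)(2r\mu_1-\mu_2) + (\eta-1)\mu_2\Bigr], \]
and the assumed inequality $2r\overline{\mu}_1 \le (1+\tfrac{1-\eta}{\log r})\underline{\mu}_2$ forces $(\log r)(2r\mu_1-\mu_2)\le(1-\eta)\mu_2$, so the bracket is $\le 0$ and $f(R_n)$ is again a supermartingale outside a bounded set; here $\liminf\underline{\mu}_2>0$ is what guarantees the second-order term does not vanish and so dominates the error. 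Since $f$ is now increasing with $f(\infty)=\infty$, stopping at the first entry into $[0,r_0]$ yields a nonnegative supermartingale converging a.s.; on the event that the process never enters $[0,r_0]$, convergence of $f(R_n)$ contradicts $\limsup R_n=\infty$. Thus the process enters $[0,r_0]$ a.s., and re-running the argument from each such entry, using that $\limsup R_n=\infty$ is a tail property holding almost surely, produces infinitely many returns and hence $\liminf R_n \le b$ with $b=r_0$.

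The main obstacle in both parts is controlling the remainder $\mathcal{R}_n$ when the increments $\Delta R_n$ are unbounded, since only a finite $p$-th moment with $p>2$ — not a third moment — is available. The hard part will be to handle this by truncating at $|\Delta R_n| \le R_n/2$: on this event Taylor's theorem gives a remainder comparable to $|f'''(\xi)|(\Delta R_n)^2$, which is lower order than the retained $f''$ term, while the contribution of the complementary event is estimated by Markov's inequality as $\lesssim \mathbb{P}(|\Delta R_n| > R_n/2) \lesssim B\,R_n^{-p}$. Because $p>2$, this tail term decays faster than the $r^{-\delta-2}$ term of part (i) and the $(\log r)^{\eta-2}r^{-2}$ term of part (ii), so it can be absorbed into the strictly-signed main term; verifying this absorption precisely, and uniformly in the starting point, is the technical heart of the argument.
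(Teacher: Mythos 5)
The paper does not prove this theorem at all: it is imported verbatim from Menshikov--Popov--Wade \cite[Chapter 3, pp.~114--115]{menshikov2016non}, so your proposal is really a reconstruction of the proof in that reference, and your overall strategy (Lyapunov functions $r^{-\delta}$ and powers of $\log r$, supermartingale property outside a bounded set, optional stopping, truncation of increments at $R_n/2$ to exploit the $p$-th moment bound with $p>2$) is exactly the right one, and part (i) is essentially correct as sketched, modulo capping $f(r)=r^{-\delta}$ near the origin so that it is bounded (which your tail bound by $\mathbb{P}(|\Delta R_n|>R_n/2)\lesssim BR_n^{-p}$ implicitly requires) and noting that absorption of that tail needs $\delta<p-2$, which is consistent with choosing $\delta$ small.

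There is, however, a genuine gap in part (ii): your Lyapunov exponent is miscalibrated. With $f(r)=(\log r)^{\eta}$ your own computation gives the bracket $(\log r)(2r\mu_1-\mu_2)+(\eta-1)\mu_2$, and the hypothesis $2r\overline{\mu}_1\le\bigl(1+\tfrac{1-\eta}{\log r}\bigr)\underline{\mu}_2$ only yields bracket $\le 0$, with equality possible for all large $r$ (take $2r\mu_1=(1+\tfrac{1-\eta}{\log r})\mu_2$ exactly). A main term that may vanish identically cannot absorb the Taylor remainder and the truncation error, which are genuinely nonzero and of no fixed sign; so the claim that ``$\liminf\underline{\mu}_2>0$ guarantees the second-order term dominates the error'' fails precisely where the absorption is needed, and the supermartingale property cannot be concluded. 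The repair is standard: assume w.l.o.g.\ $\eta<1$ and take $f(r)=(\log r)^{\alpha}$ with $0<\alpha<\eta$, so the bracket is at most $(\alpha-\eta)\underline{\mu}_2(r)\le-(\eta-\alpha)c<0$ for large $r$ by $\liminf\underline{\mu}_2>0$, giving a main term of order $-(\log r)^{\alpha-2}r^{-2}$ which dominates the errors. Relatedly, your tail estimate is not valid in part (ii) as stated, since there $f$ is unbounded and $\Delta f$ need not be small on the event $\{|\Delta R_n|>R_n/2\}$; instead use concavity of $(\log r)^{\alpha}$ to get $\Delta f\le f'(R_n)\Delta R_n$ on $\{\Delta R_n>0\}$, whence the tail contribution is $O\bigl((\log r)^{\alpha-1}r^{-p}\bigr)$, and this (like the third-order Taylor term) is indeed $o\bigl((\log r)^{\alpha-2}r^{-2}\bigr)$ because $p>2$, completing the absorption.
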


\begin{remark}
    Since $X$ is Markov we have\begin{equation*}
    \mathbb{E}[\Delta R_n \mid \mathcal{F}_n]=\mathbb{E}[\Delta R_n \mid X_n] \leq \sup_{x \in S(R_n)} \mathbb{E}[\Delta R_n \mid X_n = x]
\end{equation*}
and therefore it suffices for  $\underline{\mu}$ and $\overline{\mu}$ to satisfy
\begin{equation}
\label{eq:mucondition}
\begin{split}
     \underline{\mu}_i(r) \leq \inf_{x \in S(r)} \mathbb{E}_x[(\Delta R)^i]   \\
      \overline{\mu}_i(r) \geq \sup_{x \in S(r)} \mathbb{E}_x[(\Delta R)^i]
\end{split}
\end{equation}
where we recall that $S(r)= \{ x \in M : \text{Dist}_M(O,x)=r \}$.
\end{remark}

We give an exact expression for the radial increment in terms of $d_{\text{tot}}$, $d_{\text{rad}}$, and the current location of the chain. We defer the proof to the appendix, the strategy being to select an appropriate model of hyperbolic space (we use the Lorentz model), and then proceed by direct calculation. 
\begin{proposition}
\label{prop:incrementexact}
Let $M$ be a manifold of constant curvature $-k^2$. Choose an origin $O$ in $M$, and take a point $X_n \in M$, such that $\text{Dist}_M(O,X_n)=R$. Choose a vector $v \in T_{X_n} M$ of length $d_{\text{tot}}$ and radial length $d_{\text{rad}}$, and let $X_{n+1}=\exp_{X_n}(v)$. Then
\begin{equation}
    \text{Dist}_M(O,X_{n+1})=\frac{1}{k}\arccosh\left( \cosh{kR}\cosh{kd_{\text{tot}}}+\phi \sinh{kR}\sinh{kd_{\text{tot}}} \right)
\end{equation}
\end{proposition}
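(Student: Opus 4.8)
The plan is to prove the radial increment formula by computing directly in the Lorentz (hyperboloid) model of the constant-curvature space $M$, exploiting the fact that the quantity in question, $\text{Dist}_M(O,X_{n+1})$, depends only on the pair of points $O$ and $X_{n+1}$, together with the relative geometry encoded by $R$, $d_{\text{tot}}$, and $\phi$. First I would set up the model: realize $M$ as the upper sheet of the hyperboloid $\{ \langle z,z\rangle_L = -1/k^2 \}$ in Minkowski space $\mathbb{R}^{1,d}$ with the Lorentzian form $\langle \cdot,\cdot\rangle_L$, and recall the standard distance formula $\cosh(k\,\text{Dist}_M(p,q)) = -k^2 \langle p, q\rangle_L$. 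I would place the origin $O$ at the apex of the hyperboloid, which makes its coordinates maximally simple, and then use the homogeneity of the model (the isometry group acts transitively, and the stabilizer of $O$ acts transitively on the unit sphere in $T_O M$) to reduce to a convenient two-dimensional slice containing $O$, $X_n$, and the radial direction.

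The key computational step is to track the geodesic $t \mapsto \exp_{X_n}(tv)$ in the Lorentz model. Geodesics on the hyperboloid through a point $p$ with unit initial velocity $u \in T_p M$ (where $\langle u,u\rangle_L = 1$ and $\langle p,u\rangle_L = 0$) are given explicitly by $\gamma(t) = \cosh(kt)\,p + \frac{1}{k}\sinh(kt)\,u$. Writing $v = d_{\text{tot}}\, u$ for a unit vector $u$, I would evaluate $X_{n+1} = \gamma(d_{\text{tot}})$ and then compute $-k^2\langle O, X_{n+1}\rangle_L$, which yields $\cosh(k R_{n+1})$. This splits into two inner products: $-k^2\langle O, p\rangle_L = \cosh(kR)$ (since $\text{Dist}_M(O,X_n) = R$), and a term $-k\langle O, u\rangle_L$ which must be identified with the radial data. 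The decomposition $\cosh(kR_{n+1}) = \cosh(kR)\cosh(k d_{\text{tot}}) + (\text{cross term})\sinh(k d_{\text{tot}})$ then emerges directly from the $\cosh$ and $\sinh$ coefficients.

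The main obstacle, and the step requiring care, is verifying that the cross term equals exactly $\phi\sinh(kR)$, i.e. that $-k\langle O, u\rangle_L = \phi \cosh(kR) \cdot (\text{correct normalization})$ resolves into $\phi \sinh(kR)$ after accounting for $\cosh(k d_{\text{tot}})$. Concretely I must relate $\langle O, u\rangle_L$ to $\langle v, e_{\text{rad}}(X_n)\rangle_{X_n}$, recalling that $\phi = d_{\text{rad}}/d_{\text{tot}} = -\langle v, e_{\text{rad}}(X_n)\rangle_{X_n}/d_{\text{tot}} = -\langle u, e_{\text{rad}}(X_n)\rangle_{X_n}$. The subtlety is that $e_{\text{rad}}(X_n)$ is the unit tangent vector at $X_n$ pointing along the geodesic \emph{away} from $O$, so I must carefully express $e_{\text{rad}}(X_n)$ in Lorentz coordinates (it is determined by $O$ and $p = X_n$ via the requirement that $\exp_{X_n}(t\,e_{\text{rad}})$ traces the $O$-through-$X_n$ geodesic) and then confirm the sign and the factor $\sinh(kR)$ arising from the projection of $O$ onto the $e_{\text{rad}}$ direction. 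Once the identity $-k\langle O,u\rangle_L = \phi\sinh(kR)$ is established, applying $\text{Dist}_M(O,X_{n+1}) = \frac{1}{k}\arccosh(-k^2\langle O,X_{n+1}\rangle_L)$ completes the proof. The degenerate case $X_n = O$ (where $e_{\text{rad}}$ is undefined and $\phi$ uses the convention $d_{\text{rad}} = d_{\text{tot}}$, so $\phi = 1$) should be checked separately, but there $R = 0$ and the formula collapses to $R_{n+1} = d_{\text{tot}}$, which is immediate.
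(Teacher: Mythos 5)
Your proposal is correct and is essentially the paper's own proof (Appendix A): the paper likewise works in the Lorentz model, applies an isometry so that $O=(\tfrac{1}{k},0,\dots,0)$ and $X_n$ lies on a coordinate geodesic, expands $X_{n+1}=\exp_{X_n}(v)$ via the explicit formula \eqref{lorentzexp}, and reads off $R_{n+1}$ from $\cosh(kR_{n+1})=-k^2B(O,X_{n+1})$ using \eqref{lorentzdistance}. The only difference is presentational: the paper evaluates $B(O,X_{n+1})$ in explicit coordinates (writing $e_{\text{rad}}$ and $v$ componentwise), whereas you split it by bilinearity and establish the invariant identity $-kB(O,u)=\phi\sinh(kR)$ — which is correct and follows exactly as you indicate, since $O=\cosh(kR)X_n+\tfrac{\sinh(kR)}{k}e_{\text{rad}}(X_n)$ (the paper's $e_{\text{rad}}$ points toward $O$) and $B(X_n,u)=0$ with $\phi=-\langle u,e_{\text{rad}}\rangle$; your separate treatment of the degenerate case $X_n=O$ is also consistent with the paper's convention $d_{\text{rad}}=d_{\text{tot}}$ there.
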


\begin{proposition} \label{prop:incrementapproximation}
Let $M$ be a manifold of constant curvature $-k^2$. Choose an origin $O$ in $M$, and let $x \in M$ be a point of distance $R_x$ from $O$. Then under Assumption \ref{assumptiondtotmoments} we have
\begin{equation}
\label{eq:incapp1}
        \mathbb{E}_x[\Delta R]=\frac{1}{k}\mathbb{E}_x[\log(\cosh k d_{\text{tot}}+\phi \sinh k d_{\text{tot}})]+O(R_x^{1-p})
\end{equation}
\begin{equation}
\label{eq:incapp2}
        \mathbb{E}_x[(\Delta R)^2]=\frac{1}{k^2}\mathbb{E}_x[\log^2(\cosh k d_{\text{tot}}+\phi \sinh k d_{\text{tot}})]+O(R_x^{2-p})
\end{equation}
where the implicit constants in the remainder terms depend only on $k$ and $B$.
\end{proposition}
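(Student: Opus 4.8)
The plan is to start from the exact formula in Proposition \ref{prop:incrementexact}, which gives
\[
\Delta R = \frac{1}{k}\arccosh\!\left(\cosh kR \cosh kd_{\text{tot}} + \phi \sinh kR \sinh kd_{\text{tot}}\right) - R,
\]
and to show that for large $R_x$ this is well-approximated by the $R$-independent quantity $\tfrac{1}{k}\log(\cosh kd_{\text{tot}} + \phi \sinh kd_{\text{tot}})$, with the error controlled in $L^1$ and $L^2$ by the moment bound of Assumption \ref{assumptiondtotmoments}. The heuristic is that $\arccosh(y) \approx \log(2y)$ for large $y$, so when $R$ is large the leading $\cosh kR \sim \tfrac12 e^{kR}$ dominates and $\arccosh(\cdots) \approx kR + \log(\cosh kd_{\text{tot}} + \phi \sinh kd_{\text{tot}})$, whence subtracting $kR$ and dividing by $k$ yields the claimed main term. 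The whole argument is thus a careful quantitative version of this asymptotic, uniform in the random variable $d_{\text{tot}}$.

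First I would write $A := \cosh kd_{\text{tot}} + \phi \sinh kd_{\text{tot}}$ and observe that, since $|\phi|\le 1$, one has $A \ge e^{-kd_{\text{tot}}} \ge \cosh kd_{\text{tot}} - \sinh kd_{\text{tot}} > 0$ and $A \le e^{kd_{\text{tot}}}$; this two-sided control of $A$ is what keeps $\log A$ integrable. Using $\cosh kR\, A + (\text{lower order}) = \cosh kR\,\cosh kd_{\text{tot}} + \phi \sinh kR \sinh kd_{\text{tot}}$, I would rewrite the argument of $\arccosh$ exactly as $\cosh kR \cdot A - \phi(\cosh kR - \sinh kR)\sinh kd_{\text{tot}}$, so that the argument equals $\cosh kR\,A\,(1 + \varepsilon)$ where $\varepsilon$ is an $O(e^{-2kR})$ correction multiplied by a factor bounded by a polynomial in $d_{\text{tot}}$ (via $\sinh kd_{\text{tot}}/A$, which is bounded). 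Then applying the expansion $\tfrac1k\arccosh(y) = \tfrac1k\log(2y) + O(y^{-2})$ for $y\ge 1$ gives
\[
\Delta R = \frac{1}{k}\log\!\big(2\cosh kR\big) + \frac{1}{k}\log A - R + \frac{1}{k}\log(1+\varepsilon) + O\!\big((\cosh kR\, A)^{-2}\big).
\]
Since $\tfrac1k\log(2\cosh kR) - R = \tfrac1k\log(1 + e^{-2kR}) = O(e^{-2kR})$, the main term $\tfrac1k\log A$ emerges and the remaining pieces are all exponentially small in $R$ times factors that are at worst polynomial in $d_{\text{tot}}$.

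The heart of the matter is to pass from this pointwise-in-$\omega$ estimate to the stated $O(R_x^{1-p})$ and $O(R_x^{2-p})$ bounds after taking $\mathbb{E}_x[\cdot]$ and $\mathbb{E}_x[(\cdot)^2]$. Here I would not try to extract an exponentially small error in $R_x$ uniformly, because the error terms carry polynomial factors of $d_{\text{tot}}$ that are not bounded; instead the strategy is to split on the event $\{d_{\text{tot}} \le R_x/2\}$ and its complement. On the bulk event the exponential smallness in $R$ beats the polynomial factors in $d_{\text{tot}}$, giving an error far smaller than any power of $R_x$; on the tail event $\{d_{\text{tot}} > R_x/2\}$ one cannot rely on smallness of the correction, but Assumption \ref{assumptiondtotmoments} gives $\mathbb{P}_x(d_{\text{tot}} > R_x/2) \le B(R_x/2)^{-p}$ and, more usefully, Hölder/Markov estimates of the form $\mathbb{E}_x[d_{\text{tot}}^q\,\mathbf{1}\{d_{\text{tot}}>R_x/2\}] = O(R_x^{q-p})$ for $q \in \{1,2\}$. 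Since on this tail the discrepancy between $\Delta R$ and $\tfrac1k\log A$ is itself $O(d_{\text{tot}})$ (both quantities are bounded by a constant times $d_{\text{tot}}$, using the triangle inequality for $\Delta R$ and $|\tfrac1k\log A|\le d_{\text{tot}}$), its first moment over the tail is $O(R_x^{1-p})$ and its contribution to the second moment, via $\mathbb{E}_x[(\Delta R - \tfrac1k\log A)^2\mathbf{1}\{d_{\text{tot}}>R_x/2\}] = O(R_x^{2-p})$, matches \eqref{eq:incapp2}. I expect this tail bookkeeping — in particular getting the second-moment error to come out as $R_x^{2-p}$ rather than something weaker, and verifying that the implicit constants depend only on $k$ and $B$ — to be the main technical obstacle, since it requires carefully tracking which $L^q$ norm of $d_{\text{tot}}$ each error term consumes and confirming that the budget $p>2$ from Assumption \ref{assumptiondtotmoments} suffices.
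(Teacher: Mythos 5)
Your proposal follows the same route as the paper's proof: start from the exact formula of Proposition \ref{prop:incrementexact}, expand $\arccosh(y)=\log(2y)+O(y^{-2})$ to identify $\tfrac1k\log A$ as the main term, and split the expectation on $E=\{d_{\text{tot}}\le R_x/2\}$ versus its complement, using $|\Delta R|\le d_{\text{tot}}$, $|\tfrac1k\log A|\le d_{\text{tot}}$ and Assumption \ref{assumptiondtotmoments} on the tail. However, two steps need repair. First, your parenthetical claim that $\sinh(kd_{\text{tot}})/A$ is bounded (and the related assertion that the error carries factors ``at worst polynomial in $d_{\text{tot}}$'') is false: when $\phi=-1$ one has $A=e^{-kd_{\text{tot}}}$, so $\sinh(kd_{\text{tot}})/A\sim\tfrac12 e^{2kd_{\text{tot}}}$. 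The worst-case factor is \emph{exponential} in $d_{\text{tot}}$, and the bulk estimate survives only because the cutoff $R_x/2$ is small enough that $e^{-2kR_x}e^{2kd_{\text{tot}}}\le e^{-kR_x}$ on $E$ --- this is exactly the paper's pointwise bound $|\Delta R-\tfrac1k\log(c+\phi s)|\le\tfrac5k e^{2k(d_{\text{tot}}-R_x)}$. So the conclusion stands, but your justification ``exponential in $R$ beats polynomial in $d_{\text{tot}}$'' must be replaced by this exponent bookkeeping; with a factor like $e^{5kd_{\text{tot}}}$ the same cutoff would fail, so the check is not optional.

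Second, for \eqref{eq:incapp2} the quantity to bound is $\mathbb{E}_x[(\Delta R)^2-\tfrac{1}{k^2}\log^2 A]$, and controlling $\mathbb{E}_x[(\Delta R-\tfrac1k\log A)^2]$ alone does not suffice: writing $a=\Delta R$, $b=\tfrac1k\log A$, the generic route via Cauchy--Schwarz, $|\mathbb{E}[a^2-b^2]|\le\|a-b\|_2\|a+b\|_2$, only yields $O(R_x^{1-p/2})$, which is strictly weaker than $O(R_x^{2-p})$ when $p>2$. The paper instead uses the identity $a^2-b^2=(a-b)^2+2b(a-b)$ and bounds the cross term pointwise via $|b|\le d_{\text{tot}}$, split on the same event: on $E$ the cross term is $O(e^{-kR_x}d_{\text{tot}})$ with $\mathbb{E}_x[d_{\text{tot}}]$ bounded by Lyapunov's inequality, and on $E^c$ it is at most $2d_{\text{tot}}^2$, whose truncated expectation is $O(R_x^{2-p})$ by the same Markov-type estimate you describe. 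You correctly flag this second-moment bookkeeping as the main obstacle, but the proposal does not carry it out; once the cross term is handled as above, your outline coincides with the paper's proof.
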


\begin{proof}
Let $\alpha=\cosh{kR_x}$, $\beta=\sinh{kR_x}$, $c=\cosh{k d_{\text{tot}}}$ and $s=\sinh{k d_{\text{tot}}}$. 

Using Proposition \ref{prop:incrementexact}, followed by some algebraic manipulation, we find that
\small
\begin{align*}
    &\left| \Delta R-\frac{1}{k}\log{(c+\phi s)} \right| =\left| \frac{1}{k} \arccosh{(\alpha c + \phi \beta s)}-R_x-\frac{1}{k}\log{(c+\phi s)} \right| \\
    &= \left|\frac{1}{k} \log(2(\alpha c + \phi \beta s))  + \frac{1}{k} \log\left(\frac{1+\sqrt{1-(\alpha c + \phi \beta s)^{-2}}}{2}\right) - R_x - \frac{1}{k} \log(c+\phi s)\right| \\
    &= \left|\frac{1}{k} \log \left( \frac{2(\alpha c + \phi \beta s) \times e^{-kR_x}}{c+\phi s} \right) + \frac{1}{k}  \log\left(\frac{1+\sqrt{1-(\alpha c + \phi \beta s)^{-2}}}{2}\right)\right| \\
    &= \left|\frac{1}{k} \log \left( \frac{c(1+e^{-2kR_x}) + \phi s (1-e^{-2kR_x}) }{c+\phi s} \right) + \frac{1}{k}  \log\left(\frac{1+\sqrt{1-(\alpha c + \phi \beta s)^{-2}}}{2}\right)\right| \\
    &=  \left| \frac{1}{k} \log\left(1+\frac{c-\phi s}{c+\phi s} e^{-2kR_x}\right)+\frac{1}{k}\log\left(\frac{1+\sqrt{1-(\alpha c + \phi \beta s)^{-2}}}{2}\right)\right| \\
    &\leq  \left| \frac{1}{k} \log\left(1+\frac{c-\phi s}{c+\phi s} e^{-2kR_x}\right) \right| +\left| \frac{1}{k}\log\left(\frac{1+\sqrt{1-(\alpha c + \phi \beta s)^{-2}}}{2}\right) \right| \\
\end{align*}
Note that, since $\alpha, \beta, c, s \geq 0$ and $\phi \in [-1,1]$, we have
\[ e^{-kd_{\text{tot}}} = c-s \leq c \pm \phi s \leq c+s=e^{kd_{\text{tot}}} \: \: (\dagger)\]
In particular, $c \pm \phi s \geq 0$. Further,
\[ \alpha c + \phi \beta s \geq \alpha c - \beta s = \cosh(k R_x - k d_{\text{tot}}) \geq \max\left(\frac{1}{2} \exp(|kR_x - kd_{\text{tot}}|), \: 1 \right). \: \: (\ddagger) \] 

One can verify that if $u \geq 0$ then $|\log(1+u)| \leq u$ and that if $u \geq 1$ then 
\[
\left\lvert \log  \left( \frac{1+\sqrt{1-u^{-2}}}{2}  \right) \right\rvert \leq \frac{1}{u^2}.
\]
It follows that
\begin{align*}
    \left| \Delta R-\frac{1}{k}\log{(c+\phi s)} \right| &\leq \frac{1}{k} \frac{c-\phi s}{c+\phi s} e^{-2kR_x} + \frac{1}{k} \frac{1}{(\alpha c + \phi \beta s)^2} \\
    &\leq \frac{1}{k} e^{2k(d_{\text{tot}}-R_x)} + \frac{4}{k} e^{2k(d_{\text{tot}}-R_x)} \: \: \: \text{(by} \dagger \text{and} \ddagger\text{)}\\
    &= \frac{5}{k} e^{2k(d_{\text{tot}}-R_x)} \hspace{10pt} (*)
\end{align*}
Let $E$ be the event that $d_{\text{tot}} \leq R/2$, and $E^c$ its complement. Then
\begin{equation*}
\begin{split}
\left|\mathbb{E}_x\left[ \Delta R -\frac{1}{k}\log{(c+\phi s)} \right] \right| \leq \left|\mathbb{E}_x\left[ \left( \Delta R -\frac{1}{k}\log{(c+\phi s)} \right) 1_E \right] \right| + \\
\left|\mathbb{E}_x\left[ \left( \Delta R -\frac{1}{k}\log{(c+\phi s)} \right) 1_{E^c}\right] \right| \\
:= Q_1 + Q_2
\end{split}
\end{equation*}
By $(*)$,
\begin{align*}
Q_1 &\leq \mathbb{E}\left[  \frac{5}{k} e^{2k(d_{\text{tot}}-R_x)} 1_E \right] \\
&\leq \mathbb{E}\left[\frac{5}{k}e^{-kR_x} 1_E\right] \\
&\leq \frac{5}{k}e^{-kR_x}
\end{align*}
To bound $Q_2$, we use Assumption \ref{assumptiondtotmoments}:
\begin{align*}
    Q_2 \leq \mathbb{E}_x[2 d_{\text{tot}} 1_{E^c}] = \mathbb{E}_x[2 d_{\text{tot}}^p d_{\text{tot}}^{1-p} 1_{E^c}] &\leq \left(\frac{R_x}{2}\right)^{1-p} \mathbb{E}_x[2d_{tot}^p 1_{E^c}] \\
    &\leq 2^{p} R_x^{1-p} B
\end{align*}
Combining these bounds establishes \eqref{eq:incapp1}. For \eqref{eq:incapp2}, we note from the elementary observation that $a^2-b^2 = (a-b)^2 + 2b(a-b)$ that
{\small
\begin{equation*}
\begin{split}
\left|\mathbb{E}_x\left[ (\Delta R)^2 -\left(\frac{1}{k}\log{(c+\phi s)}\right)^2 \right] \right| \\ 
\leq \mathbb{E}_x\left[\left| \left( \Delta R -\frac{1}{k}\log{(c+\phi s)} \right)^2  1_E - \frac{2}{k} \log(c+\phi s) \left( \Delta R -\frac{1}{k}\log{(c+\phi s)} \right) 1_E \right| \right] \\
+\mathbb{E}_x\left[\left| \left( \Delta R -\frac{1}{k}\log{(c+\phi s)} \right)^2  1_{E^c} - \frac{2}{k} \log(c+\phi s) \left( \Delta R -\frac{1}{k}\log{(c+\phi s)} \right) 1_{E^c} \right| \right] \\
:= Q_3 + Q_4
\end{split}
\end{equation*}
}
Using $(*)$, we find that
\begin{align*}
    Q_3 &\leq \mathbb{E}_x\left[\left(\left(\frac{5}{k}e^{2k(d_{\text{tot}}-R_x)}\right)^2+ 2d_{\text{tot}} \times \left(\frac{5}{k}e^{2k(d_{\text{tot}}-R_x)}\right)\right) 1_E \right] \\
    &\leq \mathbb{E}_x\left[ \frac{25}{k^2} e^{-2kR_x} + \frac{5}{k} e^{-kR_x} d_{\text{tot}}\right]
\end{align*}
which is of the required form by Assumption \ref{assumptiondtotmoments}. Finally,
\begin{align*}
Q_4 &\leq \mathbb{E}_x[8 \: d_{\text{tot}}^2 1_{E^c} ] \\
       &=\mathbb{E}_x[8 \: d_{\text{tot}}^p \: d_{\text{tot}}^{2-p}  1_{E^c}] \\
       &\leq 2^{p+1}R^{2-p} \: \mathbb{E}_x[d_{\text{tot}}^{p} 1_{E^c}] \\  
       &\leq 2^{p+1} B \: R^{2-p}
\end{align*}
which gives \eqref{eq:incapp2}.

\end{proof}

\begin{proof}[Proof of Theorem \ref{thm:mainresultA}]
Proposition \ref{prop:incrementapproximation} tells us that there exists a constant $C>0$, independent of $x \in M$, such that, for $i=1,2$,
\begin{equation*}
   \mathbb{E}_x\left[\frac{1}{k^i} \log^i(c+\phi s)\right] - Cr^{i-p} \leq \mathbb{E}_x[(\Delta R)^i] \leq \mathbb{E}_x\left[\frac{1}{k^i}\log^i(c+\phi s)\right] + Cr^{i-p}
\end{equation*}
Taking suprema and infima over $S(r)$, we find that if we let
\begin{align*}
    \underline{\mu}_i(r) &= \underline{\nu}_i(r) - Cr^{i-p} \\
    \overline{\mu}_r(r) &= \overline{\nu}_i(r) + Cr^{i-p}
\end{align*}
then condition \eqref{eq:mucondition} is satisfied. It suffices, therefore, to check that if the assumptions in (i) (respectively (ii)) hold for $\nu$, then they also hold for $\mu$. For (i),
\begin{align*}
    \liminf_{r \rightarrow \infty}(2r \underline{\mu}_1 - \overline{\mu}_2) &= \liminf_{r \rightarrow \infty}(2r \underline{\nu}_1 - \overline{\nu}_2-3Cr^{2-p}) \\
    &\geq \liminf_{r \rightarrow \infty} (2r \underline{\nu}_1 - \overline{\nu}_2) + \liminf_{r \rightarrow \infty} 3Cr^{2-p} \\
    & \geq 0
\end{align*}
Assume that (ii) holds for $\nu$ for some constants $r_0 \geq 0$ and $\theta>0$, where $\theta=2\theta'$. Then
{ \small
\begin{align*}
    2r \overline{\mu}_1 &- \left(1+\frac{1-\theta'}{\log r} \right)\underline{\mu}_2 = 2r\overline{\nu}_1 - \left(1+\frac{1-\theta'}{\log r} \right) \underline{\nu}_2+C\left(3+\frac{1-\theta'}{\log r} \right)r^{2-p} \\
    &=\left(2r\overline{\nu}_1 -\left(1+\frac{1-\theta}{\log r}  \right) \underline{\nu}_2 \right) - \frac{\theta' \underline{\nu}_2}{\log r} + C\left(3+\frac{1-\theta'}{\log r} \right)r^{2-p}  \: \: (*)
\end{align*}
} 
\noindent By assumption, $\liminf_{r \rightarrow \infty} \underline{\nu_2}>0$, and therefore  $\frac{\theta' \underline{\nu}_2}{\log r}$ decays more slowly than $C\left(3+\frac{1-\theta'}{\log r} \right)r^{2-p}$. It follows that $(*)$ will be negative for all sufficiently large $r$, as required.
\end{proof}

\section{Generalising to Non-Constant Curvature}
\label{section:nonconstantcurvature}

Given a manifold $M$ with non-constant sectional curvature, we can sometimes reduce to the constant-curvature case using the following consequence of the Rauch comparison theorem.

\begin{theorem} \label{thm:rauch}
Let $M$ and $M'$ be complete and simply connected Riemannian manifolds of everywhere nonpositive sectional curvature. Suppose that for all points $p \in M, p' \in M'$ and planes $\Pi \subset T_pM$, $\Pi' \subset T_{p'}M'$, the sectional curvature satisfies $K(p',\Pi') \geq K(p,\Pi)$. Let $p \in M$, $p' \in M'$ and fix a linear isometry $i: T_pM \rightarrow T_{p'}M'$. Given a curve $c: [0,1] \rightarrow M$, define a corresponding curve $c': [0,1] \rightarrow M'$ by $c'(s)=\exp_{p'} \circ i \circ \exp^{-1}_p(c(s))$. Then $\text{Length}(c) \geq \text{Length}(c')$. 
\end{theorem}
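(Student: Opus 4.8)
The plan is to show that the single map $\Phi := \exp_{p'} \circ i \circ \exp_p^{-1} \colon M \to M'$ is length-nonincreasing; since $c' = \Phi \circ c$ by definition, the claimed inequality is then immediate. First I would record that $\Phi$ is a well-defined diffeomorphism: by the Cartan--Hadamard theorem (already invoked via \cite[Lemma 2.1.4]{jost2012nonpositive}) the completeness, simple connectedness, and nonpositivity of curvature guarantee that both $\exp_p$ and $\exp_{p'}$ are diffeomorphisms, so $\exp_p^{-1}$ is globally defined and $\Phi$ is smooth. Because $\mathrm{Length}(c') = \int_0^1 \lvert d\Phi_{c(s)}(\dot c(s)) \rvert \, ds$, it suffices to establish the pointwise contraction estimate $\lvert d\Phi_q(w) \rvert \le \lvert w \rvert$ for every $q \in M$ and $w \in T_q M$; integrating this over $s$ delivers $\mathrm{Length}(c') \le \mathrm{Length}(c)$.

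Next I would compute $d\Phi_q$ by the chain rule. Fix $q \in M$, set $v := \exp_p^{-1}(q) \in T_p M$ and $v' := i(v) \in T_{p'} M'$, and consider the geodesics $\gamma(t) := \exp_p(tv)$ and $\gamma'(t) := \exp_{p'}(tv')$, which have equal speed $\lvert v \rvert = \lvert v' \rvert$ since $i$ is a linear isometry. Using that $i$ is linear (so $di = i$) and that $d(\exp_p^{-1})_q = (d(\exp_p)_v)^{-1}$, we get $d\Phi_q = d(\exp_{p'})_{v'} \circ i \circ (d(\exp_p)_v)^{-1}$. I would then invoke the standard identification of the derivative of the exponential map with a Jacobi field (see \cite[Ch.~5]{carmo1992riemannian}): writing $w = d(\exp_p)_v(u)$ with $u \in T_p M$, one has $w = J_u(1)$, where $J_u$ is the Jacobi field along $\gamma$ with $J_u(0) = 0$ and $\tfrac{D}{dt} J_u(0) = u$, while $d\Phi_q(w) = \widetilde J(1)$, where $\widetilde J$ is the Jacobi field along $\gamma'$ with $\widetilde J(0) = 0$ and $\tfrac{D}{dt} \widetilde J(0) = i(u)$.

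With this reduction in place, the heart of the argument is the Rauch comparison theorem \cite[Ch.~10]{carmo1992riemannian}. The matching hypotheses hold: $\lvert i(u) \rvert = \lvert u \rvert$, and the tangential-component condition $\langle i(u), v' \rangle = \langle u, v \rangle$ holds since $v' = i(v)$ and $i$ preserves inner products. The curvature hypothesis $K(p', \Pi') \ge K(p, \Pi)$ makes $M'$ the manifold of \emph{larger} sectional curvature, so Rauch yields the bound $\lvert \widetilde J(t) \rvert \le \lvert J_u(t) \rvert$ for all $t \in [0,1]$, on the side condition that $\gamma$ have no conjugate points on $(0,1]$ --- which is automatic because $M$ is Cartan--Hadamard. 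Taking $t = 1$ gives $\lvert d\Phi_q(w) \rvert \le \lvert w \rvert$, as required.

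I expect the main obstacle to be purely bookkeeping around Rauch: orienting the inequality correctly (the more curved manifold $M'$ carries the \emph{smaller} Jacobi field) and dispatching the technical hypotheses. In particular, the cleanest textbook form of Rauch is stated for Jacobi fields normal to the geodesic, so to handle a general $u$ I would split $u = u^\top + u^\perp$ into components parallel and orthogonal to $v$. The isometry $i$ respects this splitting since it preserves $\langle \cdot, v \rangle$; the tangential Jacobi field is linear in $t$ and curvature-independent, hence has identical norm in $M$ and $M'$; and the normal parts, being themselves Jacobi fields vanishing at $0$ and pointwise orthogonal to the tangential part (because $t \mapsto \langle J_u(t), \dot\gamma(t) \rangle$ is affine and vanishes at $0$), are compared directly by the normal Rauch theorem. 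Reassembling via $\lvert J \rvert^2 = \lvert J^\top \rvert^2 + \lvert J^\perp \rvert^2$ then yields the desired norm inequality.
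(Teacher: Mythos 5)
Your proposal is correct and is essentially the paper's own argument: the paper notes that under these hypotheses both exponential maps are global diffeomorphisms (Cartan--Hadamard) and then cites Chapter~10, Proposition~2.5 of do Carmo, whose standard proof is exactly the Jacobi-field/Rauch computation you carry out in full. The only quibble is that the no-conjugate-point hypothesis in Rauch is required of the geodesic in the \emph{larger}-curvature manifold $M'$, not of $\gamma$ in $M$; this is immaterial here since $M'$ is also Cartan--Hadamard and hence free of conjugate points.
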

\begin{proof}
Under these circumstances, the exponential maps $TM \rightarrow M$, $TM' \rightarrow M'$ are diffeomorphisms. The statement therefore follows from Chapter 10, Proposition 2.5 of \cite{carmo1992riemannian}. 
\end{proof}

\begin{proposition}
\label{prop:rauchconsequence}
Let $M$ be a complete simply connected manifold whose sectional curvature is negative and of magnitude at least $k^2>0$ at every point. Then, in the notation of Proposition \ref{prop:incrementapproximation},
\begin{equation*}
        \mathbb{E}_x[\Delta R] \geq 
        \frac{1}{k}\mathbb{E}_x[\log(\cosh k d_{\text{tot}}+\phi \sinh k d_{\text{tot}})]+O(R_x^{1-p})
\end{equation*}
Suppose, in addition, that the sectional curvature of $M$ has magnitude at most $K^2$ at every point. Then
\begin{equation*}
\begin{split}
        \mathbb{E}_x[(\Delta R)^2] \geq 
        \frac{1}{k^2}\mathbb{E}_x[\log^2(\cosh k d_{\text{tot}}+\phi \sinh k d_{\text{tot}})1_{(\Delta R) \geq 0)}] \: + \\
        \frac{1}{K^2}\mathbb{E}_x[\log^2(\cosh K d_{\text{tot}}+\phi \sinh K d_{\text{tot}})1_{(\Delta R) < 0)}]  
\end{split}
\end{equation*}
\end{proposition}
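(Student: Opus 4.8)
The plan is to reduce everything to the constant-curvature formula of Proposition \ref{prop:incrementexact} by transplanting the geodesics that control $R_{n+1}$ into the two model spaces and comparing lengths with Theorem \ref{thm:rauch}. Write $M_\kappa$ for the simply connected space of constant curvature $-\kappa^2$ with a fixed origin $O_\kappa$, and for the given data $R=\text{Dist}_M(O,X_n)$, $d_{\text{tot}}$, $\phi$ let
\[ R^{(\kappa)} := \tfrac{1}{\kappa}\arccosh(\cosh\kappa R \cosh\kappa d_{\text{tot}} + \phi \sinh\kappa R \sinh\kappa d_{\text{tot}}) \]
be the radial distance that Proposition \ref{prop:incrementexact} assigns to such an increment in $M_\kappa$. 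The geometric heart of the argument is the pointwise sandwich $R^{(k)}\le R_{n+1}\le R^{(K)}$.

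To obtain $R_{n+1}\ge R^{(k)}$ I would choose $x'\in M_k$ with $\text{Dist}_{M_k}(O_k,x')=R$ and a linear isometry $i\colon T_{X_n}M\to T_{x'}M_k$ with $i(e_{\text{rad}}(X_n))=e_{\text{rad}}(x')$, and set $G=\exp_{x'}\circ\, i\circ\exp_{X_n}^{-1}$. Since $e_{\text{rad}}(X_n)$ is the inward radial unit vector, $\exp_{X_n}^{-1}(O)=R\,e_{\text{rad}}(X_n)$, so $G(O)=O_k$; and because $i$ preserves both $d_{\text{tot}}$ and the radial length of $v$, hence $\phi$, Proposition \ref{prop:incrementexact} shows $G(X_{n+1})=\exp_{x'}(i(v))$ lies at distance $R^{(k)}$ from $O_k$. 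As $K_M\le-k^2$, Theorem \ref{thm:rauch} applies with $M'=M_k$ to the geodesic $c$ from $O$ to $X_{n+1}$, giving $R_{n+1}=\text{Length}(c)\ge\text{Length}(G\circ c)\ge\text{Dist}_{M_k}(O_k,G(X_{n+1}))=R^{(k)}$. The mirror-image construction — transplanting the model geodesic based at the distance-$R$ point of $M_K$ into $M$, which is now the space of greater curvature since $K_M\ge-K^2$ — yields $R_{n+1}\le R^{(K)}$.

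The first-moment bound then follows by taking $\mathbb{E}_x$ in $\Delta R\ge R^{(k)}-R$ and replacing $R^{(k)}-R$ by $\tfrac1k\log(\cosh k d_{\text{tot}}+\phi\sinh k d_{\text{tot}})$. The error incurred is exactly the pointwise estimate $(*)$ from the proof of Proposition \ref{prop:incrementapproximation}, and integrating it against the law of $d_{\text{tot}}$ under $\mathbb{E}_x$ via Assumption \ref{assumptiondtotmoments} reproduces the $O(R_x^{1-p})$ remainder verbatim.

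For the second moment I would first establish the clean pointwise bound
\[ (\Delta R)^2 \ge (R^{(k)}-R)^2\,1_{R^{(k)}\ge R} + (R^{(K)}-R)^2\,1_{R^{(K)}\le R}, \]
which follows from the sandwich: when $R^{(k)}\ge R$ one has $\Delta R\ge R^{(k)}-R\ge0$, when $R^{(K)}\le R$ one has $\Delta R\le R^{(K)}-R\le0$, and if $R^{(k)}<R<R^{(K)}$ the right-hand side vanishes. Taking $\mathbb{E}_x$ and replacing each $(R^{(\kappa)}-R)^2$ by $\tfrac1{\kappa^2}\log^2(\cosh\kappa d_{\text{tot}}+\phi\sinh\kappa d_{\text{tot}})$ through the identity $a^2-b^2=(a-b)^2+2b(a-b)$ together with the estimate $(*)$ — exactly as in the derivation of \eqref{eq:incapp2} — produces the two logarithmic terms. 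The step I expect to be the main obstacle is reconciling the indicators $1_{R^{(k)}\ge R}$, $1_{R^{(K)}\le R}$ arising here with the indicators $1_{\Delta R\ge0}$, $1_{\Delta R<0}$ in the statement: only the inclusions $\{R^{(k)}\ge R\}\subseteq\{\Delta R\ge0\}$ and $\{R^{(K)}\le R\}\subseteq\{\Delta R<0\}$ are automatic, so the two sets differ on the sign-crossing regions $\{R^{(k)}<R\le R^{(K)}\}$, where the true radial increment and the model radial increments disagree in sign and the comparison of squares is not available pointwise. The route I would take to control the contribution of these regions — showing the logarithmic surrogate is correspondingly small on them, again via the truncation $E=\{d_{\text{tot}}\le R/2\}$ and Assumption \ref{assumptiondtotmoments} — is the delicate part of the argument, and is where any residual lower-order remainder would enter.
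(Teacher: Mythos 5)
Your geometric core --- the sandwich $R^{(k)} \le R_{n+1} \le R^{(K)}$ obtained by transplanting through a linear isometry that matches radial directions and applying Theorem \ref{thm:rauch} in both directions --- and your first-moment argument are exactly the paper's proof (the paper phrases the transplant via triangles in the tangent spaces, but it is the same construction). For the second moment, your pointwise bound
\begin{equation*}
(\Delta R)^2 \;\ge\; \bigl(R^{(k)}-R\bigr)^2 1_{\{R^{(k)}\ge R\}} \;+\; \bigl(R^{(K)}-R\bigr)^2 1_{\{R^{(K)}\le R\}}
\end{equation*}
is also precisely what the paper proves: its elementary sign fact ($x\le y$ and $x\ge 0$ imply $x^2\le y^2$; $x\le y$ and $y\le 0$ imply $x^2\ge y^2$) is applicable exactly on your two events, where the \emph{comparison} increment has a definite sign. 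The paper then writes the conclusion with the indicators $1_{\Delta R\ge 0}$, $1_{\Delta R<0}$ with no further argument. So the step you single out as the main obstacle is not something you failed to see; it is a gap in the paper's own proof.

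The substantive point is that your proposed repair --- showing, via the truncation $E=\{d_{\text{tot}}\le R/2\}$ and Assumption \ref{assumptiondtotmoments}, that the logarithmic surrogate is small on the sign-crossing region --- cannot work, because with the indicators $1_{\Delta R\ge 0}$, $1_{\Delta R<0}$ the stated inequality is simply false. Take $M$ of constant curvature $-\Lambda^2$, set $k=1$ and $K=\Lambda$ large, and let the increments be deterministic with $d_{\text{tot}}=1$ and $\phi=-\tanh(\Lambda/2)$. Proposition \ref{prop:incrementexact} and the identity $\tanh(\Lambda/2)\sinh\Lambda=\cosh\Lambda-1$ give
\begin{equation*}
\cosh(\Lambda R_{n+1})-\cosh(\Lambda R_n)=(\cosh\Lambda-1)\,e^{-\Lambda R_n}>0,
\end{equation*}
so $\Delta R>0$ always, i.e.\ $1_{\Delta R\ge 0}\equiv 1$, yet $\Delta R=O(e^{-2\Lambda R_n})$ as $R_n\to\infty$; meanwhile the $k$-term of the claimed lower bound equals $\log^2\bigl(\cosh 1-\tanh(\Lambda/2)\sinh 1\bigr)\to\log^2(e^{-1})=1$ as $\Lambda\to\infty$. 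Hence for fixed large $\Lambda$ and large $R_x$ the left side is exponentially small while the claimed right side is of order one, and no $O(R_x^{2-p})$ remainder can absorb that discrepancy. (Your bound holds vacuously here: both of your indicators vanish, since $R^{(1)}<R_n<R^{(\Lambda)}=R_{n+1}$.) The honest resolution is to state the proposition with your indicators --- on the signs of the comparison increments, equivalently (up to the usual remainders) on the signs of the logarithms --- which is the form your argument fully proves, and to note that the hypothesis of Theorem \ref{thm:mainresultB}(ii), which quotes the indicators $1_{\Delta R\ge 0}$, $1_{\Delta R<0}$, would need to be adjusted to match.
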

\begin{proof}
Let $R$, $d_{\text{tot}}$ and $d_{\text{rad}}$ be fixed, and let $M'$ be a manifold of constant curvature $-k^2$. Let $O,p_n, p_{n+1} \in M$ and $O',p_n',p_{n+1}'$ such that $\text{dist}_M(O,p_n)=\text{dist}_{M'}(O',p_n')=R$, and such that $p_{n+1}, p_{n+1}'$ are both consistent with the specified values of $d_{\text{tot}}$ and $d_{\text{rad}}$. Then we claim that $\text{dist}_M(O,p_{n+1}) \geq \text{dist}_{M'}(O',p_{n+1}')$. Indeed, consider triangle $Op_{n}p_{n+1}$. By applying $\exp^{-1}_{p_n}$ we obtain a triangle $O''p_{n}''p_{n+1}'' \in T_{p_n} M$ such that $O''p_{n}''=OP$ and $p_{n}p_{n+1}=p''_{n}p_{n+1}''=d_{\text{tot}}$. Similarly, applying $\exp^{-1}_{p'}$ gives a triangle $O'''p_{n}'''p_{n+1}'''$. The exponential map preserves distances from $p_{n}''$ (or $p_{n}'''$), and so $O''p_{n}''=O'''p_{n}'''$ and $p_{n}''p_{n+1}''=p_{n}'''p_{n+1}'''=d_{\text{tot}}$. Also, because $d_{\text{rad}}$ is fixed, the angles $O''p_{n}''p_{n+1}''$ and $O'''p_{n}'''p_{n+1}'''$ are equal. It follows that there exists an isometry $i$ mapping $O''p_{n}''p_{n+1}''$ to $O'''p_{n}'''p_{n+1}'''$. Applying Theorem \ref{thm:rauch}, our claim follows. Now choose  $M'$ to be the manifold of constant curvature $-k^2$ and then the first part of the proposition follows from taking expectations, together with Proposition \ref{prop:incrementapproximation}. \\

For the second part, we may repeat the argument in the first part to obtain both a lower and an upper bound for $(\Delta R)$. Recall that for real numbers $x$ and $y$ such that $x \leq y$, we have $x^2 \leq y^2$ if $x \geq 0$ whilst $x^2 \geq y^2$ if $y \leq 0$. Therefore if we write $(\Delta R)^2 = (\Delta R)^2 1_{\Delta R \geq 0} + (\Delta R)^2 1_{\Delta R<0}$ we deduce a lower bound for both these terms, and hence obtain the result.
\end{proof}

These estimates may be used in conjunction with Theorem \ref{thm:mainresultA} to obtain recurrence-transience criteria for more general manifolds. For example

\begin{theorem}
\label{thm:mainresultB}
Let $M$ be a complete simply connected manifold whose sectional curvature is negative and of magnitude at least $k^2>0$ at every point. Let $X$ be a Markov chain on $M$. \\
(i) Suppose that
\begin{equation*}
    \lim_{r \rightarrow \infty} r \inf_{x \in S(r)}\mathbb{E}_x[\log(\cosh(kd_{\text{tot}})+ \phi \sinh(kd_{\text{tot}}))] = \infty
\end{equation*}
then the chain is transient. \\
(ii) Suppose instead  that the sectional curvature $\Pi$ satisfies $-K^2 \leq \Pi \leq -k^2$ at each point, that
\begin{equation*}
\begin{split}
        \lim_{r \rightarrow \infty} \inf_{x \in S(r)} \frac{1}{k}\mathbb{E}_x[\log^2(\cosh k d_{\text{tot}}+\phi \sinh k d_{\text{tot}})1_{(\Delta R) \geq 0)}] \: + \\
        \frac{1}{K}\mathbb{E}_x[\log^2(\cosh K d_{\text{tot}}+\phi \sinh K d_{\text{tot}})1_{(\Delta R) < 0)}] > 0  
\end{split}
\end{equation*}
and that
\begin{equation*}
    \lim_{r \rightarrow \infty} r \inf_{x \in S(r)}\mathbb{E}_x[\log(\cosh(kd_{\text{tot}})+ \frac{d_{\text{rad}}}{d_{\text{tot}}} \sinh(kd_{\text{tot}}))] = 0
\end{equation*}
then the chain is recurrent.
\end{theorem}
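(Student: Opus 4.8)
The plan is to deploy Proposition \ref{prop:rauchconsequence} to reduce both parts to the Lamperti-type dichotomy of Theorem \ref{lyapunov}, in exactly the way the proof of Theorem \ref{thm:mainresultA} reduces to it. Throughout I would assume, as in Theorem \ref{thm:mainresultA}, that Assumptions \ref{assumptiondtotmoments} and \ref{assumptionnotconfined} are in force: the latter supplies the hypothesis $\limsup_n R_n = \infty$ a.s.\ needed to invoke Theorem \ref{lyapunov}, and the former, together with the triangle inequality $|\Delta R| \le d_{\text{tot}}$, gives $\mathbb{E}[(\Delta R)^p \mid \mathcal{F}_n] \le \mathbb{E}[d_{\text{tot}}^p \mid \mathcal{F}_n] \le B$, so the $p$-th moment requirement of Theorem \ref{lyapunov} holds automatically. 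The work then consists of manufacturing admissible bounding functions $\underline{\mu}_i, \overline{\mu}_i$ satisfying \eqref{eq:mucondition} and checking that the stated hypotheses transfer to them.

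For part (i) I would first use $|\Delta R| \le d_{\text{tot}}$ and Jensen's inequality to get $\mathbb{E}_x[(\Delta R)^2] \le \mathbb{E}_x[d_{\text{tot}}^2] \le B^{2/p}$ uniformly in $x$, so one may take $\overline{\mu}_2 \equiv B^{2/p}$ and obtain $\limsup_r \overline{\mu}_2(r) < \infty$ for free. Next, the first inequality of Proposition \ref{prop:rauchconsequence} licenses
\[ \underline{\mu}_1(r) = \frac{1}{k}\inf_{x \in S(r)} \mathbb{E}_x[\log(\cosh k d_{\text{tot}} + \phi\sinh k d_{\text{tot}})] - C r^{1-p} \]
for a constant $C$ depending only on $k$ and $B$. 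The hypothesis of (i) says precisely that $r\inf_{x \in S(r)}\mathbb{E}_x[\log(\cosh k d_{\text{tot}}+\phi\sinh k d_{\text{tot}})] \to \infty$, whence $2r\underline{\mu}_1(r) \to \infty$ while $\overline{\mu}_2$ stays bounded; thus $\liminf_r(2r\underline{\mu}_1 - \overline{\mu}_2) = +\infty > 0$, and Theorem \ref{lyapunov}(i) yields $R_n \to \infty$ a.s., i.e.\ $O$-transience.

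Part (ii) is where the real work lies. The second hypothesis, fed through the second inequality of Proposition \ref{prop:rauchconsequence} (which uses the two-sided pinching $-K^2 \le \Pi \le -k^2$ and splits $\Delta R$ by sign), produces a lower bound $\underline{\mu}_2(r)$ for $\inf_{x \in S(r)}\mathbb{E}_x[(\Delta R)^2]$ with $\liminf_r \underline{\mu}_2(r) > 0$. The delicate ingredient is an \emph{upper} bound $\overline{\mu}_1$ on the drift. For this I would run the Rauch comparison of Theorem \ref{thm:rauch} in the opposite direction, comparing $M$ against the space form of constant curvature $-K^2$ (legitimate since $\Pi \ge -K^2$), which reverses the length inequality and gives $\mathbb{E}_x[\Delta R] \le \frac{1}{K}\mathbb{E}_x[\log(\cosh K d_{\text{tot}} + \phi\sinh K d_{\text{tot}})] + O(R_x^{1-p})$. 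One then checks the Lamperti inequality $2r\overline{\mu}_1(r) \le (1 + \frac{1-\eta}{\log r})\underline{\mu}_2(r)$ for large $r$: since $\underline{\mu}_2$ is bounded below by a positive constant, it suffices that $r\,\overline{\mu}_1(r) \to 0$, which is exactly what the third hypothesis (controlling the radial first moment) is designed to guarantee; the remainder contributes $O(r^{2-p}) \to 0$ because $p>2$. Applying Theorem \ref{lyapunov}(ii) then gives $\liminf_n R_n \le b$ a.s., i.e.\ $O$-recurrence.

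The main obstacle I anticipate is precisely this upper bound on the drift and its bookkeeping against the stated hypothesis. The comparison giving the \emph{lower} bound on $\mathbb{E}_x[\Delta R]$ naturally carries the constant $k$, whereas the matching \emph{upper} bound emerges from the $-K^2$ comparison and carries $K$; reconciling these constants (and the $\inf$ versus $\sup$ over $S(r)$) with the precise form in which the first-moment hypothesis is phrased will require care, as will confirming that the $O(R_x^{1-p})$ remainders do not spoil the Lamperti inequality after multiplication by $r$. I expect that the second-order (variance-like) nature of the radial first moment is what ultimately forces the $k$- and $K$-versions of that quantity to be comparable and negligibly small at the relevant scale, thereby closing the gap.
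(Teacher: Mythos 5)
The paper contains no free-standing proof of Theorem \ref{thm:mainresultB}: it is offered as an immediate consequence of Proposition \ref{prop:rauchconsequence} fed into the Lamperti dichotomy of Theorem \ref{lyapunov} (equivalently, the argument proving Theorem \ref{thm:mainresultA}), with Assumptions \ref{assumptiondtotmoments} and \ref{assumptionnotconfined} understood to be in force. That is exactly your strategy, and your part (i) is the intended argument and is complete: $\overline{\mu}_2 \equiv B^{2/p}$ via $|\Delta R| \le d_{\text{tot}}$ and Lyapunov's inequality, $\underline{\mu}_1(r) = \frac{1}{k}\inf_{x \in S(r)}\mathbb{E}_x[\log(\cosh kd_{\text{tot}}+\phi\sinh kd_{\text{tot}})] - Cr^{1-p}$, and the remainder is harmless since $p>2$.

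In part (ii), however, the obstacle you flag is not mere bookkeeping, and your proposed way of closing it fails. The recurrence half of Theorem \ref{lyapunov} needs an upper bound $\overline{\mu}_1$ on $\sup_{x \in S(r)}\mathbb{E}_x[\Delta R]$, and the comparison against constant curvature $-K^2$ gives this only for the $K$-quantity; the theorem's third hypothesis instead constrains the $k$-quantity, and through an infimum over $S(r)$ at that (an infimum says nothing about points of $S(r)$ with large outward drift, so it cannot feed the sup side of \eqref{eq:mucondition}). Your hope that the $k$- and $K$-quantities are forced to be comparable is incorrect under the stated hypotheses: writing $F(\kappa,d_{\text{rad}},d_{\text{tot}}) = \frac{1}{\kappa}\log(\cosh \kappa d_{\text{tot}}+\phi\sinh \kappa d_{\text{tot}})$, the function $F$ is nondecreasing in $\kappa$ and the gap $F(K,\cdot)-F(k,\cdot)$ is generically of order $d_{\text{tot}}$, not of order $d_{\text{tot}}^2-d_{\text{rad}}^2$. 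Concretely, fix $d_{\text{tot}}=1$, let $\phi=-1$ with probability $q$ and $\phi=0$ with probability $1-q$, and tune $q$ so that $\mathbb{E}[F(k,\cdot)]=0$; since $t\mapsto \log\cosh(t)/t$ is strictly increasing, $\mathbb{E}[F(K,\cdot)]=c>0$. Running this increment law on a manifold of constant curvature $-K^2$ (which satisfies the pinching $-K^2 \le \Pi \le -k^2$) yields a chain satisfying both displayed hypotheses of (ii) — even with the infimum strengthened to a supremum — which is nonetheless transient by part (i) applied with the constant $K$. Your comparability intuition is salvageable only for zero-drift chains with bounded increments, where Lemma \ref{lemma:fgrowthestimate} gives $\mathbb{E}_x[F(K,\cdot)] \le \frac{J_{max}(K,D)}{J_{min}(k,D)}\,\mathbb{E}_x[F(k,\cdot)]$ for $d_{\text{tot}}\le D$; but neither zero drift nor boundedness is assumed here, and the $p$-th moment bound alone cannot tame $J_{max}(K,d_{\text{tot}})\sim e^{2Kd_{\text{tot}}}/(4Kd_{\text{tot}}^2)$. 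The upshot: the statement should be read (or amended) so that the first-moment hypothesis is
\begin{equation*}
\lim_{r \rightarrow \infty} \; r \sup_{x \in S(r)}\mathbb{E}_x\left[\log\left(\cosh(Kd_{\text{tot}})+ \phi \sinh(Kd_{\text{tot}})\right)\right] = 0,
\end{equation*}
with the second-moment hypothesis carrying $1/k^2$ and $1/K^2$ as in Proposition \ref{prop:rauchconsequence}; under that reading your argument closes and coincides with the paper's intended proof, but as written your proposal does not (and cannot) establish the literal statement.
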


\section{A Criterion for Non-Confinement}
\label{section:nonconfinement}

Here we discuss sufficient conditions to have $\limsup{R_n}=\infty$ almost surely. As stated in Section \ref{section:assumptions}, in applications this is usually straightforward to prove. One such condition is Equation (3.10) of  \cite{menshikov2016non} which says that the radial process exits any interval $[0,r]$ in a finite number of steps with positive probability, depending only on $r$.

\begin{proposition}
\label{prop:localescape}
Suppose that for each $r \in \mathbb{R}_{\geq 0}$, there exist $a_r \in \mathbb{N}$ and $\delta_r>0$ such that, for all $n \geq 0$,
\[ \mathbb{P}[ \max_{n \leq m \leq n + a_r} R_m \geq r \mid \mathcal{F}_n] \geq \delta_r \text{ on } \{ R_n \leq r \} \]
Then Assumption \ref{assumptionnotconfined} holds.
\end{proposition}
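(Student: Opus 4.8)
The plan is to show directly that $\limsup_{n\to\infty} R_n = \infty$ almost surely, which is exactly Assumption \ref{assumptionnotconfined} read with respect to the chosen origin $O$. I would first reduce this to a statement about reaching individual levels. A real sequence has infinite $\limsup$ precisely when it is unbounded above, and
\[ \Big\{ \sup_m R_m = \infty \Big\} = \bigcap_{r \in \mathbb{N}} \big\{ \exists\, m : R_m \geq r \big\}, \]
so by countable intersection it suffices to prove that for each fixed $r$ the process almost surely reaches level $r$, that is, $\mathbb{P}(\exists\, m : R_m \geq r) = 1$.

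I would then fix $r$ and exploit the hypothesis through a blocking argument. Partition the time axis into blocks $[k a_r, (k+1)a_r]$ and let $C_k$ be the event $\{\max_{k a_r \le m \le (k+1)a_r} R_m \ge r\}$ that level $r$ is attained during block $k$. The event that level $r$ is never reached is contained in $\bigcap_k C_k^c$, so it is enough to show $\mathbb{P}\big(\bigcap_{k=0}^{K} C_k^c\big) \to 0$ as $K \to \infty$. The decisive observation is that the blocks $[k a_r, (k+1) a_r]$ for $0 \le k \le K-1$ cover $[0, K a_r]$; hence on $\bigcap_{k=0}^{K-1} C_k^c$ one has $R_m < r$ for every $m \le K a_r$, and in particular $R_{K a_r} \le r$. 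This is precisely the event $\{R_{K a_r} \le r\}$ on which the hypothesis of the proposition is valid, so it licenses the bound $\mathbb{P}(C_K^c \mid \mathcal{F}_{K a_r}) \le 1 - \delta_r$ there. Since $R$ is adapted, $\mathbf{1}_{\bigcap_{k=0}^{K-1} C_k^c}$ is $\mathcal{F}_{K a_r}$-measurable.

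Finally I would iterate using the tower property. Conditioning on $\mathcal{F}_{K a_r}$ and inserting the previous bound gives
\[ \mathbb{P}\Big( \bigcap_{k=0}^{K} C_k^c \Big) = \mathbb{E}\Big[ \mathbf{1}_{\bigcap_{k=0}^{K-1} C_k^c}\, \mathbb{P}(C_K^c \mid \mathcal{F}_{K a_r}) \Big] \le (1-\delta_r)\, \mathbb{P}\Big( \bigcap_{k=0}^{K-1} C_k^c \Big), \]
so by induction $\mathbb{P}\big(\bigcap_{k=0}^{K} C_k^c\big) \le (1-\delta_r)^{K+1} \to 0$. Thus the process reaches level $r$ almost surely, and intersecting over $r \in \mathbb{N}$ yields $\sup_m R_m = \infty$, equivalently $\limsup_n R_n = \infty$, almost surely.

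The only genuine subtlety—more a point to get right than a deep obstacle—is the bookkeeping in the second paragraph: one must check that the conditioning event $\bigcap_{k=0}^{K-1} C_k^c$ really does force $R_{K a_r} \le r$, since this is exactly what lets the hypothesis be invoked at the start of each successive block and makes the geometric decay go through. The argument is in essence a conditional Borel--Cantelli estimate and uses neither second-moment nor drift information.
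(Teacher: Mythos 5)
Your argument is correct, but note that the paper contains no proof of Proposition \ref{prop:localescape} at all: the hypothesis is identified with Equation (3.10) of \cite{menshikov2016non}, and the conclusion that Assumption \ref{assumptionnotconfined} holds is simply deferred to that reference. Your write-up therefore supplies what the paper outsources, and the route you take is essentially the standard proof of the cited fact: reduce $\limsup_n R_n = \infty$ to reaching each integer level $r$ (legitimate since every $R_n$ is finite, so unboundedness above and infinite $\limsup$ coincide), then partition time into blocks of length $a_r$ and show by the tower property that the probability no block attains level $r$ decays geometrically. The key checks are all in place: the blocks cover $[0,Ka_r]$, so the conditioning event forces $R_{Ka_r} \leq r$ and licenses the hypothesis at time $Ka_r$, and that conditioning event is $\mathcal{F}_{Ka_r}$-measurable by adaptedness. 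The only detail you leave implicit is the base case $K=0$, where the covering argument is vacuous and $R_0 \leq r$ need not hold; this is harmless, either because $C_0$ occurs automatically on $\{R_0 \geq r\}$, so that $\mathbb{P}(C_0^c \mid \mathcal{F}_0) \leq 1-\delta_r$ almost surely in every case, or because one can start the induction from the trivial bound $\mathbb{P}(C_0^c) \leq 1$ and still obtain $\mathbb{P}\bigl(\bigcap_{k=0}^K C_k^c\bigr) \leq (1-\delta_r)^{K} \to 0$.
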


We state another condition which applies to the class of chains satisfying $\mathbb{E}_x[d_{\text{rad}}]=0$ for all $x \in M$, and therefore to all zero-drift chains.
\begin{proposition}
\label{prop:radialnondegeneracycriterion}
Let $M$ be a manifold satisfying Assumption \ref{assumptionnegativecurvature}. Let $X$ be a Markov chain on $M$ satisfying Assumption \ref{assumptiondtotmoments}. Suppose that $\mathbb{E}_x[d_{rad}]=0$ and that there exists $\epsilon>0$ such that 
$\mathbb{E}_x[d_{rad}^2] \geq \epsilon$ for all $x \in M$. Then Assumption \ref{assumptionnotconfined} holds.
\end{proposition}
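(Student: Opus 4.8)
The plan is to verify the hypotheses of Proposition~\ref{prop:localescape}: to show that from any state with $R_n\le r$ the radial process exceeds $r$ within a bounded number of steps, with probability bounded below uniformly in the starting point. The geometric input I would use is the almost sure inequality $\Delta R_n \ge d_{\text{rad}}^n$, which expresses that in nonpositive curvature one recedes from $O$ at least as fast as the radial part of the increment dictates. I would obtain it from the triangle-comparison argument in the proof of Proposition~\ref{prop:rauchconsequence}, but with the comparison manifold $M'$ taken to be Euclidean $\mathbb{R}^d$; this is legitimate because $\mathbb{R}^d$ has sectional curvature $0\ge K(p,\Pi)$ everywhere, so Theorem~\ref{thm:rauch} applies and gives $R_{n+1}=\text{Dist}_M(O,X_{n+1})\ge \text{Dist}_{\mathbb{R}^d}(O',X_{n+1}')$. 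In $\mathbb{R}^d$ the comparison point satisfies $|x'+v'|\ge |x'|+\langle v',x'/|x'|\rangle = R_n+d_{\text{rad}}^n$ by a one-line computation, and the case $X_n=O$ is immediate. (Equivalently this is the convexity of $\text{Dist}_M(O,\cdot)$ combined with the first-variation identity $\tfrac{d}{dt}\big|_{t=0}\text{Dist}_M(O,\exp_{X_n}(tv_n))=d_{\text{rad}}^n$.)

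Fixing a block length $a$ and summing this inequality over $j=0,\dots,a-1$ gives $R_{n+a}\ge R_n+W$, where $W:=\sum_{j=0}^{a-1} d_{\text{rad}}^{n+j}$. Since $\mathbb{E}[d_{\text{rad}}^{m}\mid\mathcal{F}_m]=0$ the partial sums form a martingale, so, conditionally on $\mathcal{F}_n$, the variable $W$ has mean zero and, the cross terms vanishing, $\mathbb{E}[W^2\mid\mathcal{F}_n]=\sum_{j=0}^{a-1}\mathbb{E}[(d_{\text{rad}}^{n+j})^2\mid\mathcal{F}_n]\ge a\epsilon$. On the other hand $|d_{\text{rad}}|\le d_{\text{tot}}$, so Assumption~\ref{assumptiondtotmoments} together with the Burkholder--Davis--Gundy inequality and the power-mean inequality yields $\mathbb{E}[|W|^p\mid\mathcal{F}_n]\le C_p B\,a^{p/2}$ for a universal constant $C_p$. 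Normalising, $\tilde W:=W/\sqrt{a}$ has conditional mean zero, variance at least $\epsilon$, and $p$-th moment at most $C_pB$, all uniformly in $a$, in $n$, and in the starting point.

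The heart of the argument is a one-sided anti-concentration estimate: \emph{any} mean-zero random variable whose variance is at least $\epsilon$ and whose $L^p$-norm is bounded, for some fixed $p>2$, places mass at least $\delta_0$ strictly above a fixed positive level, with $\delta_0$ and the level depending only on $\epsilon$, $C_pB$ and $p$. I would prove this by bounding $\mathbb{E}|\tilde W|$ from below: Lyapunov's inequality (log-convexity of $r\mapsto\log\mathbb{E}|\tilde W|^r$) gives $\epsilon\le\mathbb{E}\tilde W^2\le(\mathbb{E}|\tilde W|)^{(p-2)/(p-1)}(\mathbb{E}|\tilde W|^p)^{1/(p-1)}$, hence $\mathbb{E}|\tilde W|\ge m$ for an explicit $m>0$; as $\mathbb{E}\tilde W=0$ forces $\mathbb{E}\tilde W^+=\tfrac12\mathbb{E}|\tilde W|\ge m/2$, splitting this expectation at the level $m/2$ and estimating the tail by Hölder against the $L^p$ bound gives $\mathbb{P}(\tilde W>m/2)\ge\delta_0>0$. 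This is the one place where the moment exponent $p>2$ is essential---the exponent $(p-2)/(p-1)$ degenerates at $p=2$---and I expect it to be the main obstacle, since it is exactly what prevents the outward fluctuations from being drowned by a heavy inward tail.

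Applying this lemma to the conditional law of $W$ given $\mathcal{F}_n$ shows that, almost surely on $\{R_n\le r\}$, one has $\mathbb{P}(\tilde W>m/2\mid\mathcal{F}_n)\ge\delta_0$. Given $r$, I would fix $a_r$ with $(m/2)\sqrt{a_r}>r$; then, using $R_n\ge 0$ and $R_{n+a_r}\ge R_n+W\ge W$,
\[
\mathbb{P}\Big[\max_{n\le k\le n+a_r}R_k\ge r\;\Big|\;\mathcal{F}_n\Big]\ge \mathbb{P}\big[W>r\mid\mathcal{F}_n\big]\ge \mathbb{P}\big[\tilde W>m/2\mid\mathcal{F}_n\big]\ge\delta_0
\]
on $\{R_n\le r\}$. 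This is precisely the hypothesis of Proposition~\ref{prop:localescape} with $a_r$ and $\delta_r:=\delta_0$, and that proposition then yields Assumption~\ref{assumptionnotconfined}, as required.
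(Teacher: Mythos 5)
Your proof is correct, and it shares with the paper only the geometric core: the almost sure inequality $\Delta R_n \ge d_{\text{rad}}^n$, which the paper establishes exactly as you propose (reduction to $\mathbb{R}^d$ via Theorem \ref{thm:rauch}; this is Proposition \ref{prop:radialsubmartingale}). After that the two arguments genuinely diverge. The paper never invokes Proposition \ref{prop:localescape}: it takes the Doob decomposition $R_n = L_n + A_n$, identifies the martingale part as $L_n = \sum_i d_{\text{rad}}^i$, and feeds this into Proposition \ref{prop:unboundednessofyn}, whose proof in turn rests on the sign-randomisation Lemma \ref{lemma:martingalezn} and on two results imported from the Euclidean paper \cite{georgiou2016anomalous} (their Proposition 2.1 to get $\limsup_n |Z_n| = \infty$, and their Theorem 2.2 to rule out $L_n \to -\infty$). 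You instead run a quantitative block argument: $R_{n+a} - R_n \ge W$ with $W$ a sum of $a$ martingale differences, conditional variance at least $a\epsilon$, matching $L^p$ bound of order $a^{p/2}$ by BDG/Rosenthal plus the power-mean inequality, then a one-sided anti-concentration lemma, and finally Proposition \ref{prop:localescape} with $a_r \asymp r^2$. Your route is self-contained (nothing is cited from \cite{georgiou2016anomalous}) and yields explicit escape times and a uniform escape probability; the paper's route is shorter on the page precisely because it delegates the probabilistic content to the cited Euclidean results and to the reusable general statement Proposition \ref{prop:unboundednessofyn} about submartingales. One small repair to your anti-concentration step: you must split $\mathbb{E}[\tilde W^+] \ge m/2$ at a level strictly below $m/2$, say $m/4$, since splitting at $m/2$ itself only yields $\mathbb{E}\bigl[\tilde W^+ 1_{\tilde W > m/2}\bigr] \ge 0$, which is useless; splitting at $m/4$ and applying H\"older against the $L^p$ bound gives $\mathbb{P}(\tilde W > m/4) \ge \delta_0 > 0$ with $\delta_0$ depending only on $m$, $C_p B$ and $p$, and you then choose $a_r$ so that $(m/4)\sqrt{a_r} > r$. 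This changes only constants and does not affect the structure of your argument.
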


The rest of this section is devoted to the proof of this result.

\begin{proposition}
\label{prop:radialsubmartingale}
Let $X_1, X_2, X_3, \dots$ be a Markov chain on $M$, where $M$ satisfies Assumption $\ref{assumptionnegativecurvature}$. Assume that $\mathbb{E}_x[d_{\text{rad}}] \geq 0$ for all $x \in M$. Then the radial process $R_n:=\text{Dist}_M(O,X_n)$ is a nonnegative submartingale.
\end{proposition}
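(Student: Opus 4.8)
The plan is to establish the pointwise almost-sure inequality $\Delta R_n \geq d_{\text{rad}}^n$, after which the submartingale property follows at once by conditioning and invoking the hypothesis $\mathbb{E}_x[d_{\text{rad}}] \geq 0$. The geometric engine is the convexity of the distance function on a manifold satisfying Assumption \ref{assumptionnegativecurvature}.

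First I would recall that, under Assumption \ref{assumptionnegativecurvature}, $M$ is a Hadamard manifold, so the distance function $r := \text{Dist}_M(O, \cdot)$ is a (globally) convex function on $M$, smooth on $M \setminus \{O\}$, whose gradient at any $x \neq O$ is the unit vector tangent to the geodesic through $O$ and $x$ pointing \emph{away} from $O$. In the notation of Section \ref{section:assumptions}, where $e_{\text{rad}}(x)$ points towards $O$, this means $\nabla r(x) = -e_{\text{rad}}(x)$, and hence the first variation formula gives $\langle \nabla r(x), v \rangle = -\langle e_{\text{rad}}(x), v \rangle = d_{\text{rad}}$ for every $v \in T_x M$. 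These are standard consequences of nonpositive curvature, and I would cite the relevant results in \cite{carmo1992riemannian} or \cite{jost2012nonpositive} rather than reprove them.

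Next, fix $n$ and work first on the event $\{X_n \neq O\}$. Setting $f(t) := r(\exp_{X_n}(t v_n))$ for $t \in [0,1]$ expresses $\Delta R_n = f(1) - f(0)$ as the increment of the composition of the convex function $r$ with a geodesic, hence $f$ is convex on $[0,1]$. Since $X_n \neq O$, the function $f$ is differentiable at $0$ with $f'(0) = \langle \nabla r(X_n), v_n \rangle = d_{\text{rad}}^n$, so the supporting-line inequality for convex functions yields $\Delta R_n = f(1) - f(0) \geq f'(0) = d_{\text{rad}}^n$; note that this argument is robust to the geodesic later passing through $O$, since convexity only requires the right derivative at the base point. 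On the complementary event $\{X_n = O\}$ the distance function fails to be differentiable, but there $R_n = 0$ and $\Delta R_n = R_{n+1} = d_{\text{tot}}^n = d_{\text{rad}}^n$ by the definition \eqref{eq:draddef}, so the inequality holds with equality. Thus $\Delta R_n \geq d_{\text{rad}}^n$ holds almost surely.

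Finally, taking conditional expectations and using Notation \ref{notation:markov} together with the Markov property gives $\mathbb{E}[\Delta R_n \mid \mathcal{F}_n] = \mathbb{E}_{X_n}[\Delta R] \geq \mathbb{E}_{X_n}[d_{\text{rad}}] \geq 0$, which is exactly the submartingale inequality; nonnegativity of $R_n$ is immediate from the definition, and integrability follows from the triangle inequality $R_{n+1} \leq R_n + d_{\text{tot}}^n$ together with the moment control on the increments (so that $\mathbb{E}[R_n] < \infty$ for each $n$). I expect the geometric step to be the only real obstacle: correctly identifying $f'(0)$ with $d_{\text{rad}}^n$ via the gradient of $r$, and handling the non-smoothness of $r$ at the origin. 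Everything downstream is a routine consequence of conditioning.
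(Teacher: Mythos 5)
Your proof is correct, and it reaches the paper's key pointwise inequality $\Delta R_n \geq d_{\text{rad}}^n$ by a genuinely different route. The paper invokes its comparison theorem (Theorem \ref{thm:rauch}) to reduce to the Euclidean case, where the cosine rule gives the explicit formula $R_{n+1} = R_n\sqrt{1+2d_{\text{rad}}^n/R_n+(d_{\text{tot}}^n)^2/R_n^2}$, and then uses $d_{\text{tot}}^n \geq |d_{\text{rad}}^n|$ to conclude $\Delta R_n \geq d_{\text{rad}}^n$ before taking expectations. You instead argue intrinsically: convexity of the distance function on a Hadamard manifold, the identification $\nabla r(x) = -e_{\text{rad}}(x)$ so that $f'(0) = d_{\text{rad}}^n$ for $f(t)=r(\exp_{X_n}(tv_n))$, and the supporting-line inequality for convex functions. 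Both derivations are sound; what differs is what they buy. The paper's route recycles machinery it needs anyway (Theorem \ref{thm:rauch} also drives Proposition \ref{prop:rauchconsequence}), so the marginal cost of the proof is two lines of algebra. Your route avoids the comparison map entirely, is coordinate-free, would extend verbatim to CAT(0) spaces (where convexity of the metric still holds but exponential-map comparisons are less natural), and is more careful on two points the paper glosses over: the degenerate case $X_n = O$, where the Euclidean formula above is ill-defined but your appeal to \eqref{eq:draddef} gives equality, and the integrability needed for the submartingale definition. On that last point, note that your appeal to ``moment control on the increments'' implicitly uses Assumption \ref{assumptiondtotmoments}, which is not among the proposition's stated hypotheses (though it is in force in the application, Proposition \ref{prop:radialnondegeneracycriterion}); the paper's proof is silent on integrability altogether, so this is a shared, minor gap rather than a defect of your argument.
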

\begin{proof}
By Theorem \ref{thm:rauch}, it is enough to prove this in the Euclidean case. Using the cosine rule for triangles in $\mathbb{R}^2$, one can show that
\begin{equation*}
    \text{Dist}_M(O,X_{n+1}) = R_n \sqrt{1+\frac{2 d^n_{\text{rad}}}{R_n}+\frac{(d^n_{\text{tot}})^2}{R_n^2}}
\end{equation*}
Since $d^n_{\text{tot}} \geq |d^n_{\text{rad}}|$, we deduce that
\[ R_{n+1}-R_{n} \geq d_{\text{rad}} \]
for all values of $R_n, d_{\text{rad}}, d_{\text{tot}}$, and the result follows upon taking expectations.
\end{proof}

Let $(Y_n)_{n \in \mathbb{N}}$ be a process in $\mathbb{R}^d$ adapted to $(\mathcal{F}_n)_{n \in \mathbb{N}}$, and suppose that $Y_0=0$. Recall (or see \cite{williams1991probability}) that $Y$ has a Doob decomposition
\begin{equation}
    \label{eq:Doobdecomposition}
     Y_n = L_n + A_n
\end{equation}
where $L$ is an $\mathcal{F}$-martingale and $A$ is a predictable process, given by
\[ A_n = \sum_{i=0}^{n-1} \mathbb{E}[X_{i+1}-X_{i} \mid \mathcal{F}_{i}] \]
If $Y$ is a submartingale then $A$ is nonnegative and increasing.

\begin{lemma}
\label{lemma:martingalezn}
Let $(Y_n)_{n \in \mathbb{N}}$ be a process in $\mathbb{R}^d$ adapted to $(\mathcal{F}_n)_{n \in \mathbb{N}}$. Let $(L_n)_{n \in \mathbb{N}}$ and $(A_n)_{n \in \mathbb{N}}$ be as in Equation \eqref{eq:Doobdecomposition}. Assume that $\mathbb{E}[|\Delta L_n|^p \mid \mathcal{F}_n] \leq B$ and $\mathbb{E}[|\Delta A_n|^p \mid \mathcal{F}_n] \leq B$ almost surely for some $p>2, B \in \mathbb{R}$. After enlarging $\Omega$ if necessary, consider the process $(Z_n)_{n \in \mathbb{N}}$ given by $Z_0=Y_0$ and 
\[ (\Delta Z_n)=(\Delta L_n) + \xi_n(\Delta A_n) \]
where the $\xi_n$ are equal to $\pm 1$ with equal probability, independently of each other, $L$ or $A$. Then
\begin{enumerate}[(i)]
\item $\mathbb{E}[|\Delta Z_n|^p \mid \mathcal{F}_n] \leq B'$ for some $B'$ depending only on $B,p$, and $d$,
\item $Z$ is an $\mathcal{F}$-martingale,
\item $\mathbb{E}[|\Delta Y_n|^2 \mid \mathcal{F}_n]=\mathbb{E}[|\Delta Z_n|^2 \mid \mathcal{F}_n].$
\end{enumerate}
\end{lemma}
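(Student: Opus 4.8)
The plan is to handle the three claims in turn, the unifying idea being that multiplying the predictable increment $\Delta A_n$ by an independent mean-zero sign destroys the drift coming from $A$ while leaving the conditional second moment unchanged, since $\xi_n^2=1$ and the two cross terms that arise vanish for complementary reasons. Before anything else I would pin down the filtration, since the phrase ``$\mathcal{F}$-martingale'' is slightly loose once $\Omega$ has been enlarged. Set $\mathcal{G}_n := \mathcal{F}_n \vee \sigma(\xi_0,\dots,\xi_{n-1})$. Then $Z$ is $\mathcal{G}$-adapted: writing $Z_n = Y_0 + \sum_{i<n}(\Delta L_i + \xi_i \Delta A_i)$, each $L_i$ is $\mathcal{F}_i$-measurable, each $\Delta A_i = \mathbb{E}[\Delta Y_i \mid \mathcal{F}_i]$ is $\mathcal{F}_i$-measurable by predictability, and each $\xi_i$ with $i<n$ lies in $\mathcal{G}_n$. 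I will use, as is implicit in the construction, that $\xi_n$ is independent of $\mathcal{G}_n$ (equivalently, of $\mathcal{F}_\infty$ and of $\xi_0,\dots,\xi_{n-1}$) and of $\Delta L_n$.

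For (i), since $|\xi_n|=1$ the triangle inequality gives $|\Delta Z_n| \leq |\Delta L_n| + |\Delta A_n|$, and the elementary convexity bound $(a+b)^p \leq 2^{p-1}(a^p+b^p)$ for $a,b\geq 0$ yields
\[
\mathbb{E}[|\Delta Z_n|^p \mid \mathcal{F}_n] \leq 2^{p-1}\bigl(\mathbb{E}[|\Delta L_n|^p\mid\mathcal{F}_n] + \mathbb{E}[|\Delta A_n|^p\mid\mathcal{F}_n]\bigr) \leq 2^p B,
\]
so $B' = 2^p B$ suffices (the allowed dependence on $d$ is not even needed here). For (ii), I would compute $\mathbb{E}[\Delta Z_n \mid \mathcal{G}_n] = \mathbb{E}[\Delta L_n \mid \mathcal{G}_n] + \Delta A_n\,\mathbb{E}[\xi_n \mid \mathcal{G}_n]$, pulling the $\mathcal{G}_n$-measurable factor $\Delta A_n$ out of the second term. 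That term vanishes because $\xi_n$ is independent of $\mathcal{G}_n$ with $\mathbb{E}[\xi_n]=0$; the first term equals $\mathbb{E}[\Delta L_n \mid \mathcal{F}_n]=0$, using that $\Delta L_n$ is conditionally independent of $\sigma(\xi_0,\dots,\xi_{n-1})$ given $\mathcal{F}_n$ and that $L$ is an $\mathcal{F}$-martingale.

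For (iii), expanding the squared norm and using $\xi_n^2=1$ gives $\mathbb{E}[|\Delta Z_n|^2\mid\mathcal{F}_n] = \mathbb{E}[|\Delta L_n|^2\mid\mathcal{F}_n] + 2\mathbb{E}[\xi_n\langle\Delta L_n,\Delta A_n\rangle\mid\mathcal{F}_n] + \mathbb{E}[|\Delta A_n|^2\mid\mathcal{F}_n]$, while $\mathbb{E}[|\Delta Y_n|^2\mid\mathcal{F}_n]$ has the identical first and third terms but cross term $2\mathbb{E}[\langle\Delta L_n,\Delta A_n\rangle\mid\mathcal{F}_n]$. The $Y$ cross term vanishes because $\Delta A_n$ is $\mathcal{F}_n$-measurable and $\mathbb{E}[\Delta L_n\mid\mathcal{F}_n]=0$; the $Z$ cross term vanishes because $\xi_n$ is independent of $(\mathcal{F}_n,\Delta L_n,\Delta A_n)$ with mean zero, so the sign averages out. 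Equating the two surviving sums proves the identity. The two second-moment expansions are routine; the only genuinely delicate point is the independence bookkeeping that makes both cross terms vanish and that legitimises conditioning $\xi_n$ out of each expression — in particular being explicit that $\xi_n$ is independent of $\Delta L_n$ (not merely of $\mathcal{F}_n$), which is what the hypothesis ``independently of $L$ or $A$'' is there to guarantee.
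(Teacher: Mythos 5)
Your proof is correct and takes essentially the same route as the paper's: the triangle-inequality/convexity bound for (i), killing the drift term via the independent mean-zero sign for (ii), and the matching cross-term cancellations (independence of $\xi_n$ for $Z$, $\mathcal{F}_n$-measurability of $\Delta A_n$ plus the martingale property of $L$ for $Y$) for (iii). The only differences are cosmetic: you treat the enlarged filtration $\mathcal{G}_n$ more carefully than the paper does, while omitting the paper's one-line integrability check $\mathbb{E}[|Z_n|] \leq \sum_{k \leq n}\mathbb{E}[|\Delta Z_k|] < \infty$ (immediate from your part (i) and Lyapunov's inequality), which is formally required for the martingale property in (ii).
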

\begin{proof}
\begin{enumerate}[(i)]
    \item This follows from the bounds on $\mathbb{E}[|\Delta L|^p]$, $\mathbb{E}[|\Delta A|^p]$, and the inequality $|x+y|^p \leq C_{d,p}(|x|^p + |y|^p)$
for vectors $x,y \in \mathbb{R}^d$, where $C$ is a constant.
\item We check that
\begin{equation*}
\mathbb{E}[\Delta Z_n \mid \mathcal{F}_n]= \mathbb{E}[\Delta L_n \mid \mathcal{F}_n] + (\Delta A_n) \mathbb{E}[ \xi_n \mid \mathcal{F}_n] = 0+0 = 0
\end{equation*}
And that by part (i) and Lyapunov's inequality, there is a constant $B''$ such that
\begin{equation*}
\mathbb{E}[|Z_n|] \leq \sum_{k=1}^n \mathbb{E}[|\Delta Z_n] \leq n B'' < \infty
\end{equation*}
\item We calculate \begin{align*}
    \mathbb{E}[|\Delta Z_n|^2 \mid \mathcal{F}_n]&=\mathbb{E}\left[ (\Delta L_n^i + \xi_n \Delta A_n^i)^2 \mid \mathcal{F}_n\right] \\
    &= \mathbb{E}\left[ (\Delta L^i_n)^2 + 2 \xi_n \Delta A_n^i \Delta L_n^i + (\Delta A^i_n)^2 \mid \mathcal{F}_n \right] \\
    &= \mathbb{E}[|\Delta L_n|^2 + |\Delta A_n|^2 \mid \mathcal{F}_n]
\end{align*}
and similarly $\mathbb{E}[(\Delta Y_n)^2 \mid \mathcal{F}_n]=\mathbb{E}[(\Delta L_n)^2 + (\Delta A_n)^2 \mid \mathcal{F}_n]$.
\end{enumerate}
\end{proof}

\begin{proposition}
\label{prop:unboundednessofyn}
Let $(Y_n)_{n \in \mathbb{N}}$ be an $\mathbb{R}$-valued submartingale. Let $(L_n)_{n \in \mathbb{N}}$ and $(A_n)_{n \in \mathbb{N}}$ be as in Equation \eqref{eq:Doobdecomposition}. Assume that
\begin{enumerate}[(i)]
    \item There exists $B \in \mathbb{R}$, $p>2$ such that $\mathbb{E}[|\Delta L_n|^p \mid \mathcal{F}_n] \leq B$ and $\mathbb{E}[|\Delta A_n|^p \mid \mathcal{F}_n] \leq B$ for all $n$.
    \item There exists $\epsilon>0$ such that $\mathbb{E}[|\Delta Y_n|^2 \mid \mathcal{F}_n] \geq \epsilon$ for all $n$
    \item $\mathbb{P}[\lim_{n \rightarrow \infty}{L_n} = -\infty]=0$
\end{enumerate}
Then $\mathbb{P}[\limsup_{n \rightarrow \infty}{|Y_n| = \infty}]=1$.
In particular, if $Y$ is bounded below a.s. then 
$\mathbb{P}[\limsup_{n \rightarrow \infty}{Y_n = \infty}]=1$.
\end{proposition}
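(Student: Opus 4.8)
The plan is to route everything through the auxiliary martingale $Z$ supplied by Lemma~\ref{lemma:martingalezn}. Writing the Doob decomposition \eqref{eq:Doobdecomposition} as $Y_n = L_n + A_n$, the submartingale hypothesis makes $A$ nonnegative, predictable and nondecreasing. Feeding $L$ and $A$ into Lemma~\ref{lemma:martingalezn} produces a martingale $Z$ whose increments are bounded in $L^p$ for the same $p>2$ (part (i)) and which satisfies $\mathbb{E}[(\Delta Z_n)^2 \mid \mathcal{F}_n] = \mathbb{E}[(\Delta Y_n)^2 \mid \mathcal{F}_n] \geq \epsilon$ for every $n$ (part (iii) together with hypothesis (ii)). In particular the conditional quadratic variation $\sum_n \mathbb{E}[(\Delta Z_n)^2 \mid \mathcal{F}_n]$ diverges surely.

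The analytic heart of the argument---and the step I expect to be the main obstacle---is the martingale oscillation principle: a real-valued martingale whose increments are bounded in $L^p$ for some $p>2$ and whose conditional quadratic variation diverges almost surely is almost surely unbounded, so that $\limsup_n |Z_n| = \infty$ almost surely. I would deduce this from the standard convergence dichotomy for martingales with suitably controlled increments, namely that almost every path either converges to a finite limit or oscillates with $\limsup = +\infty$ and $\liminf = -\infty$, and that the convergence set agrees up to a null set with $\{\sum_n \mathbb{E}[(\Delta Z_n)^2 \mid \mathcal{F}_n] < \infty\}$. It is exactly here that the $L^p$ bound with $p>2$ is used, to guarantee that divergence of the quadratic variation rules out convergence; since our quadratic variation diverges surely, the convergence alternative is excluded and unboundedness follows.

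It then remains to transfer unboundedness of $Z$ to unboundedness of $Y$, which I would do by conditioning on the value of $A_\infty := \lim_n A_n \in [0,\infty]$. On the event $\{A_\infty = \infty\}$ the martingale $Z$ is not needed: hypothesis (iii) states that $\{\lim_n L_n = -\infty\}$ is null, equivalently $\limsup_n L_n > -\infty$ almost surely, and since $A_n \to \infty$ monotonically we obtain $Y_n = L_n + A_n \to \infty$ along any subsequence on which $L_n$ stays bounded below, giving $\limsup_n Y_n = \infty$. On the event $\{A_\infty < \infty\}$ I would instead compare $Y$ and $Z$ directly: from $Z_n - Y_n = \sum_{i<n}(\xi_i - 1)\Delta A_i$ and the fact that $A_\infty < \infty$ forces $\Delta A_i \to 0$ and hence $\sum_i (\Delta A_i)^2 < \infty$, the sign-randomised series $\sum_i \xi_i \Delta A_i$ converges almost surely by $L^2$-martingale convergence; thus $Z_n - Y_n$ converges and is bounded on this event, so $\limsup_n |Z_n| = \infty$ passes to $\limsup_n |Y_n| = \infty$.

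Combining the two cases gives $\mathbb{P}[\limsup_n |Y_n| = \infty] = 1$. The closing assertion is then immediate: if $Y$ is bounded below, then $|Y_n|$ can be large only when $Y_n$ is large and positive, so $\limsup_n |Y_n| = \infty$ upgrades to $\limsup_n Y_n = \infty$.
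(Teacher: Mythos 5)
Your reduction to the auxiliary martingale $Z$ of Lemma~\ref{lemma:martingalezn} and your transfer of unboundedness from $Z$ back to $Y$ are both sound, and in fact mirror the paper's own argument: the paper proves the inclusion $\{\limsup_n|Y_n|=\infty\}\supseteq\{\limsup_n|Z_n|=\infty\}\cap\{\lim_n L_n\neq-\infty\}$ by exactly your two cases ($A$ bounded, where $|Y_n-Z_n|\leq 2\sup_m A_m$; $A$ unbounded, where hypothesis (iii) alone gives $\limsup_n Y_n=\infty$).

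The gap is in what you yourself flag as the analytic heart. The ``standard'' result you propose to invoke --- that for a martingale with $\mathbb{E}[|\Delta Z_n|^p\mid\mathcal{F}_n]\leq B'$, $p>2$, the convergence set agrees a.s.\ with $\{\sum_n\mathbb{E}[(\Delta Z_n)^2\mid\mathcal{F}_n]<\infty\}$, so that divergence of the conditional quadratic variation already rules out convergence --- is false. Take $q_n=n^{-1}(\log n)^{-2}$, $c_n=(B'/q_n)^{1/p}$, and independent increments $\Delta Z_n$ equal to $\pm c_n$ with probability $q_n/2$ each and $0$ otherwise. Then $\mathbb{E}[|\Delta Z_n|^p\mid\mathcal{F}_n]=B'$ for every $n$, and $\sum_n\mathbb{E}[(\Delta Z_n)^2\mid\mathcal{F}_n]=(B')^{2/p}\sum_n q_n^{1-2/p}=\infty$ since $1-2/p<1$; yet $\sum_n q_n<\infty$, so by Borel--Cantelli only finitely many increments are nonzero and $Z_n$ converges (in particular stays bounded) almost surely. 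The identification you want is a bounded-increment phenomenon (or needs $\mathbb{E}[\sup_n(\Delta Z_n)^2]<\infty$, as in Neveu); conditional $L^p$ bounds with $p>2$ do not supply it, and the usual three-series statements involve \emph{truncated} variances, which is where the discrepancy hides. So ``quadratic variation diverges'' is strictly weaker than what you actually have, namely the pointwise bound $\mathbb{E}[(\Delta Z_n)^2\mid\mathcal{F}_n]\geq\epsilon$ --- note this pointwise bound fails in the example above, as $\mathbb{E}[(\Delta Z_n)^2\mid\mathcal{F}_n]\to 0$. The pointwise bound is what must be used: for $\tau_K=\inf\{n:|Z_n|>K\}$, orthogonality of the stopped increments gives $\mathbb{E}[Z_{n\wedge\tau_K}^2]\geq\epsilon\,n\,\mathbb{P}[\tau_K=\infty]$, while $|Z_{n\wedge\tau_K}|\leq K+\max_{i<n}|\Delta Z_i|$ and Jensen give $\mathbb{E}[Z_{n\wedge\tau_K}^2]\leq 2K^2+2(nB')^{2/p}$; since $2/p<1$, letting $n\to\infty$ forces $\mathbb{P}[\tau_K=\infty]=0$ for every $K$, i.e.\ $\limsup_n|Z_n|=\infty$ a.s. This is precisely the content of Proposition 2.1 of \cite{georgiou2016anomalous}, which the paper simply cites at this point; either cite it as the paper does, or run the stopping argument above, but your route through the convergence dichotomy does not close.
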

\begin{proof}
Let $Z=(Z_n)_{n \in \mathbb{N}}$ be as in Lemma \ref{lemma:martingalezn}. We claim that
\begin{equation*}
\begin{split} 
\{ \omega \in \Omega: \limsup_{n \rightarrow \infty}{|Y_n| = \infty} \} \supseteq \{ \omega \in \Omega: \limsup_{n \rightarrow \infty} {|Z_n|} = \infty \} \\ \cap \: \{ \omega \in \Omega: \lim_{n \rightarrow \infty}{L_n} \neq -\infty \}. \: \: \: (*)
\end{split}
\end{equation*}
To see this, first suppose that $\omega \in \Omega$ is such that $A$ is bounded, say $A_n \leq C$ for all $n$. Then $Y_n \geq Z_n - 2 C$ for all $n$. On the other hand, if $A$ is unbounded, then, since $A$ is positive and increasing, $\lim_{n \rightarrow \infty} A_n = \infty$ and so $\limsup_{n \rightarrow \infty} A_n + L_n$ will be infinity provided that there exists $C \in \mathbb{R}$ such that $L_n > C$ infinitely often. This establishes $(*)$. By (iii), it therefore suffices to prove that $\limsup |Z_n| \rightarrow \infty$ almost surely. Lemma \ref{lemma:martingalezn}, combined with Proposition 2.1 in \cite{georgiou2016anomalous}, establishes this result.
\end{proof}

\begin{proof}[Proof of Proposition \ref{prop:radialnondegeneracycriterion}:]
Decompose the radial process as $R_n=L_n+A_n$. It follows from Proposition \ref{prop:radialsubmartingale} and the uniqueness of the Doob decomposition that $L_n = \sum_{i=1}^n d_{\text{rad}}^{i}$. It suffices to check that the assumptions of Proposition \ref{prop:unboundednessofyn} hold when $Y_n=R_n$.
\begin{enumerate}[(i)]
    \item Almost surely, $|\Delta L_n| = |d_{\text{rad}}^n| \leq d^n_{\text{tot}}$ and $|\Delta L_n+ \Delta A_n| \leq d_{\text{tot}}$ and hence $|\Delta A_n| \leq 2d_{\text{tot}}$. Now apply Assumption \ref{assumptiondtotmoments}.
    \item $\mathbb{E}[|\Delta R_n|^2 \mid \mathcal{F}_n] \geq \mathbb{E}[|\Delta L_n|^2 \mid \mathcal{F}_n] \geq \epsilon$
    \item Theorem 2.2 in \cite{georgiou2016anomalous} shows that almost surely there is a bounded neighbourhood of the origin $N$ such that $L_n \in N$ infinitely often.
\end{enumerate}
\end{proof}

In the case $M=\mathbb{R}^n$, our sufficient condition for non-confinement may be compared to the corresponding result in \cite{georgiou2016anomalous} (Proposition 2.1).  Our result applies to a wider class of processes (not just martingales), but at the price of being a little more restrictive - we require $\mathbb{E}[d_{\text{rad}}^2 \geq \epsilon]$ as opposed to $\mathbb{E}[d_{\text{tot}}^2 \geq \epsilon]$. For applications of these criteria to examples, see Section \ref{section:examples}.

\section{Comparison of the Euclidean and Hyperbolic Cases}
\label{section:eucvshyp}

We briefly recall some results about the Euclidean case, with $X$ a Markov chain on $M=\mathbb{R}^d$ satisfying Assumptions \ref{assumptiondtotmoments} and \ref{assumptionnotconfined}. If there exists $\epsilon>0$ such that $\mathbb{E}_x[d_{\text{rad}}] \geq \epsilon$ whenever $\text{Dist}_{\mathbb{R}^d}(x,O)$ is sufficiently large, then $X$ will be transient, whilst if $\mathbb{E}_x[d_{\text{rad}}] \leq -\epsilon$ whenever  $\text{Dist}_{\mathbb{R}^d}(x,O)$  is sufficiently large, then $X$ will be recurrent. In the case $\mathbb{E}_x[d_{\text{rad}}]=0$ for all $x \in M$, \cite{georgiou2016anomalous} gives a criterion in terms of the second moments; if $\mathbb{E}_x[d_{\text{rad}}^2] \rightarrow U$ and $\mathbb{E}_x[d_{\text{tot}}^2] \rightarrow V$ as $|x| \rightarrow \infty$ then $X$ is recurrent if $2U > V$ and transient if $2U<V$. (The boundary case $2U=V$ is also considered in \cite{georgiou2016anomalous} but we do not discuss it here).

If $M$ has constant curvature $-k^2$ then the important quantity is 
\[ 
F(k,d_{\text{rad}},d_{\text{tot}}) := \frac{1}{k} \log\left(\cosh(k d_{\text{tot}}) + \frac{d_{\text{rad}}}{d_{\text{tot}}} \sinh(k d_{\text{tot}})\right)
\]
which takes values on the domain  $\mathcal{D}:=\{ (k,d_{\text{rad}},d_{\text{tot}}) \in  (0, \infty) \times [0,\infty) \times (-\infty, \infty) : d_{\text{tot}} \geq |d_{\text{rad}}| \}$. 

\begin{proposition}
Let $M$ satisfy Assumption \ref{assumptionnegativecurvature} with sectional curvature at most $-k^2$ at every point for some $k>0$. Let $X$ be a Markov chain on $M$ satisfying Assumptions
\ref{assumptiondtotmoments} and \ref{assumptionnotconfined}. \\
(i) If there exists $\epsilon>0$ such that $\mathbb{E}_x[d_{\text{rad}}] \geq \epsilon$ whenever $\text{Dist}_{\mathbb{R}^d}(x,O)$ is sufficiently large, then $X$ is $O$-transient. \\
(ii) For any $N>0$ there exists $O$-transient $X$ such that $\mathbb{E}_x[d_{\text{rad}}] \leq -N$ whenever $\text{Dist}_{\mathbb{R}^d}(x,O)$ sufficiently large. 
\end{proposition}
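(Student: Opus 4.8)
The plan is to handle the two parts by quite different means. For (i) I would deduce transience from the criterion in Theorem \ref{thm:mainresultB}(i), whose curvature hypothesis is exactly the one assumed here. The key is the elementary pointwise estimate
\[
\log\!\bigl(\cosh(k\,d_{\text{tot}}) + \phi \sinh(k\,d_{\text{tot}})\bigr) \;\geq\; k\,d_{\text{rad}},
\]
where $\phi$ is as in \eqref{eq:phidef}. Writing $t = k\,d_{\text{tot}}\ge 0$ and using $k\,d_{\text{rad}} = \phi t$, this reduces to showing $h(\phi):=\log(\cosh t + \phi \sinh t) - \phi t \ge 0$ for $\phi\in[-1,1]$. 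Since $h''(\phi) = -\sinh^2 t/(\cosh t + \phi\sinh t)^2 \le 0$, the function $h$ is concave, and $h(1)=\log e^{t}-t=0=h(-1)$; a concave function vanishing at both endpoints of an interval is nonnegative throughout it. Taking $\mathbb{E}_x$ and invoking the hypothesis $\mathbb{E}_x[d_{\text{rad}}]\ge\epsilon$ for $x$ far from $O$ gives $\mathbb{E}_x[\log(\cosh(k d_{\text{tot}})+\phi\sinh(k d_{\text{tot}}))]\ge k\epsilon$ there, so that $r\inf_{x\in S(r)}\mathbb{E}_x[\,\cdot\,]\ge k\epsilon\,r\to\infty$, and Theorem \ref{thm:mainresultB}(i) yields $O$-transience.

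For (ii) the point to exploit is that in negative curvature a long step whose tangential part points almost straight back at $O$ may still move the walker outward, because nearby geodesics separate exponentially. Accordingly I would construct $X$ with deterministic increments: from each $x\ne O$ take $v_n$ with $d_{\text{tot}}=L$ and $d_{\text{rad}}=-N$, choosing the tangential direction uniformly (so the transition kernel is well defined; its precise law will not matter), and from $O$ take an arbitrary step of length $L$. Then Assumption \ref{assumptiondtotmoments} holds since $d_{\text{tot}}\equiv L$, and $\mathbb{E}_x[d_{\text{rad}}]=-N$ for all $x\ne O$. Everything then hinges on choosing $L$ large enough to force transience despite this strongly inward drift.

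To control the radius I would use the comparison in Proposition \ref{prop:rauchconsequence}: as the curvature is at most $-k^2$, the radial increment dominates that in constant curvature $-k^2$, so $R_{n+1}\ge f(R_n)$ almost surely, where by Proposition \ref{prop:incrementexact}
\[
f(r)=\tfrac1k\arccosh\!\Bigl(\cosh kr\,\cosh kL - \tfrac{N}{L}\,\sinh kr\,\sinh kL\Bigr).
\]
Applying $\cosh$ and simplifying with the identity $(\cosh x-1)/\sinh x=\tanh(x/2)$, the inequality $f(r)>r$ becomes $\tanh(kL/2)>\tfrac{N}{L}\tanh(kr)$, which, since $\tanh(kr)<1$, holds for every $r\ge 0$ once $\tanh(kL/2)>N/L$; the latter is true for all large $L$ because its left side tends to $1$ and its right side to $0$. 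Fixing such an $L$, the map $r\mapsto f(r)-r$ is continuous, strictly positive, and tends to $F:=\tfrac1k\log(\cosh kL-\tfrac{N}{L}\sinh kL)>0$ (for instance by Proposition \ref{prop:incrementapproximation} applied in constant curvature $-k^2$), hence is bounded below by some $g_0>0$. Therefore $R_{n+1}\ge R_n+g_0$ almost surely, so $R_n\to\infty$; this establishes both Assumption \ref{assumptionnotconfined} and $O$-transience.

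The concavity bound and the half-angle manipulation are routine; the step I would be most careful about is upgrading the pointwise comparison $R_{n+1}\ge f(R_n)$ to the uniform positive lower bound $R_{n+1}\ge R_n+g_0$ on the radial increment, which is what actually forces $R_n\to\infty$ rather than merely $\limsup R_n=\infty$. This is precisely why I verify $f(r)>r$ for all $r$ and not only asymptotically: the binding case is $r\to\infty$, and it is controlled by the single inequality $\tanh(kL/2)>N/L$, so that $f-\mathrm{id}$ stays bounded away from $0$ and the deterministic lower comparison does the rest.
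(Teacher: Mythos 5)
Both parts of your proposal are correct, and the two parts sit differently relative to the paper. For (i) you are essentially doing what the paper does: the paper gets $\mathbb{E}_x[\Delta R]\geq\mathbb{E}_x[d_{\text{rad}}]\geq\epsilon$ from the Euclidean inequality $\Delta R\geq d_{\text{rad}}$ together with the comparison theorem (Theorem \ref{thm:rauch}) and then invokes the Lamperti-type criterion, whereas you prove the pointwise bound $\tfrac{1}{k}\log(\cosh(kd_{\text{tot}})+\phi\sinh(kd_{\text{tot}}))\geq d_{\text{rad}}$ by concavity in $\phi$ and feed it into Theorem \ref{thm:mainresultB}(i); your inequality is the $J_{\min}>0$ half of Lemma \ref{lemma:fgrowthestimate}, so the content coincides, only the comparison space (hyperbolic rather than Euclidean) differs. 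For (ii), however, your construction is genuinely different from the paper's. The paper ties the step length to $N$ (taking $d_{\text{tot}}=4N$ and $d_{\text{rad}}\in\{-2N,0\}$ with probability $1/2$ each), observes that $\mathbb{E}_x[F(k,d_{\text{rad}},d_{\text{tot}})]\rightarrow\infty$ as $N\rightarrow\infty$, and, after enlarging $N$ if necessary, concludes transience from Theorem \ref{thm:mainresultB}(i); you instead fix $d_{\text{rad}}=-N$ exactly, introduce the step length $L$ as a free second parameter, and prove transience pathwise via $R_{n+1}\geq f(R_n)\geq R_n+g_0$ once $\tanh(kL/2)>N/L$. Your route buys three things: the drift is exactly $-N$ for the prescribed $N$ (no enlargement of the constant), the proof of transience is deterministic and bypasses the Lamperti machinery entirely, and Assumption \ref{assumptionnotconfined} is verified for free, a hypothesis the paper's own example needs for Theorem \ref{thm:mainresultB} but leaves implicit; the paper's version is shorter given its machinery and exhibits directly the blow-up of $F$. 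Two small points you should make explicit: first, the comparison you need is the pointwise claim established inside the proof of Proposition \ref{prop:rauchconsequence} (via Theorem \ref{thm:rauch}), not the stated expectation bound with its $O(R_x^{1-p})$ error, so cite it accordingly; second, the condition $\tanh(kL/2)>N/L$ forces $L>N$, which is precisely what is required for an increment with $|d_{\text{rad}}|=N\leq d_{\text{tot}}=L$ to exist. Your half-angle reduction and the identification of the limit $F=\tfrac{1}{k}\log(\cosh kL-\tfrac{N}{L}\sinh kL)>0$ are correct, and the passage from positivity of the continuous function $f-\mathrm{id}$ with positive limit at infinity to a uniform lower bound $g_0>0$ is sound.
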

\begin{proof}
(i) This follows easily from the inequality $\mathbb{E}_x[\Delta R] \geq \mathbb{E}_x[d_{\text{rad}}]$ in Euclidean space, together with the comparison theorem. \\
(ii) For each $x \in M$ with  $\text{Dist}_{\mathbb{R}^d}(x,O)$, take the distributions of $d_{\text{tot}}$ and $d_{\text{rad}}$, conditional on the walk currently being at $x \in M$, to be
\begin{equation*}
    d_{\text{tot}}=4N \text{ almost surely, and } d_{\text{rad}} = \begin{cases} -2N & \text{ with probability } 1/2 \\
    0 & \text{ with probability } 1/2
    \end{cases}
\end{equation*}
then $\mathbb{E}_x[d_{\text{rad}}] = -N$. One can check that, for all fixed $k$, the function
$\mathbb{E}_x[F(k,d_{\text{rad}},d_{\text{tot}})]=\frac{F(k,0,4N)+F(k,-2N,4N)}{2} \rightarrow \infty$
as $N \rightarrow \infty$ and hence, by increasing the value of $N$ if necessary, we may assume that $\mathbb{E}_x[F] \geq 1$ whenever $\text{Dist}_{\mathbb{R}^d}(x,O)$ is sufficiently large. Theorem \ref{thm:mainresultB} then gives transience.
\end{proof}

We now consider the case $\mathbb{E}_x[d_{\text{rad}}]=0$ whenever  $\text{Dist}_{\mathbb{R}^d}(x,O)$ is large enough. This case is important because it contains all zero drift chains on $M$.  It is not possible, for any fixed $k$, to approximate $F(k,\cdot,\cdot)$ uniformly by a bivariate polynomial in $d_{\text{rad}}, d_{\text{tot}}$, so we do not expect any finite collection of moments to provide complete information about $F$. However, we have the following estimate.

\begin{lemma}
\label{lemma:fgrowthestimate}
For all $(k,d_{\text{rad}},d_{\text{tot}}) \in \mathcal{D}$,
\begin{align*}
  d_{\text{rad}} + J_{min}(k,d_{\text{tot}})(d_{\text{tot}}^2-d_{\text{rad}}^2) &\leq F(k,d_{\text{rad}},d_{\text{tot}}) \\
  &\leq d_{\text{rad}} + J_{max}(k,d_{\text{tot}})(d_{\text{tot}}^2-d_{\text{rad}}^2)  
\end{align*}
where
\begin{align*}
J_{min}(k,d_{\text{tot}}) &=  \frac{1}{2d_{\text{tot}}^2}\left( d_{\text{tot}} - \frac{\sinh(k d_{\text{tot}})}{k(\cosh(k d_{\text{tot}})+\sinh(k d_{\text{tot}}))} \right) \\
J_{max}(k,d_{\text{tot}}) &= \frac{1}{2d_{\text{tot}}^2}\left( -d_{\text{tot}} + \frac{\sinh(k d_{\text{tot}})}{k(\cosh(k d_{\text{tot}})-\sinh(k d_{\text{tot}}))} \right)
\end{align*}
Moreover, $J_{min}$ is positive, increasing in $k$ and decreasing in $d_{\text{tot}}$, whilst $J_{max}$ is nonnegative and increasing in both $k$ and $d_{\text{tot}}$.
\end{lemma}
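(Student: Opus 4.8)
The plan is to fix $k>0$ and $t:=d_{\text{tot}}>0$ (the case $d_{\text{tot}}=0$ forces $d_{\text{rad}}=0$ and reduces both inequalities to $0\le 0$), write $\phi:=d_{\text{rad}}/d_{\text{tot}}\in[-1,1]$, and pass to a one-variable problem. Since $d_{\text{tot}}^2-d_{\text{rad}}^2=t^2(1-\phi^2)$, setting $g(\phi):=F(k,d_{\text{rad}},d_{\text{tot}})-d_{\text{rad}}=\tfrac1k\log(\cosh kt+\phi\sinh kt)-\phi t$, the claim to prove becomes exactly $J_{\min}t^2\,(1-\phi^2)\le g(\phi)\le J_{\max}t^2\,(1-\phi^2)$ for all $\phi\in[-1,1]$.

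First I would record three elementary facts about $g$. Using $\cosh kt\pm\sinh kt=e^{\pm kt}$ one checks directly that $g(1)=g(-1)=0$. Differentiating twice gives $g''(\phi)=-\sinh^2(kt)/\bigl(k(\cosh kt+\phi\sinh kt)^2\bigr)$, which is strictly negative and, crucially, \emph{strictly increasing} in $\phi$ (the denominator increases with $\phi$ since $\sinh kt>0$). Finally, computing $g'(\pm 1)$ shows $-\tfrac12 g'(1)=J_{\min}t^2$ and $\tfrac12 g'(-1)=J_{\max}t^2$; these are precisely the values forced on $g(\phi)/(1-\phi^2)$ at the two endpoints by L'Hôpital, which is what motivates the definitions of $J_{\min}$ and $J_{\max}$.

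The two inequalities then follow from one convexity template. For the lower bound set $h:=g-J_{\min}t^2(1-\phi^2)$; then $h(\pm1)=0$ and, because $2J_{\min}t^2=-g'(1)$, also $h'(1)=0$. Since $h''=g''+2J_{\min}t^2$ is strictly increasing it changes sign at most once, so $h$ is concave-then-convex. On the convex piece abutting $\phi=1$, $h'(1)=0$ forces $h'\le 0$, so $h$ is nonincreasing into $\phi=1$, whence $h\ge h(1)=0$ there and $h\ge 0$ at the inflection point; on the concave piece $h$ lies above the chord joining $(-1,0)$ to the (nonnegative) inflection value, which is itself $\ge 0$. Hence $h\ge 0$ on $[-1,1]$, and the cases where $h''$ keeps a constant sign are handled identically. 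The upper bound is the mirror image: with $\tilde h:=J_{\max}t^2(1-\phi^2)-g$ one has $\tilde h(\pm1)=0$, $\tilde h'(-1)=0$ (using $2J_{\max}t^2=g'(-1)$), and $\tilde h''$ strictly decreasing, giving a convex-then-concave function to which the same case analysis applies. This comparison step is the conceptual heart of the argument; its only subtlety is that $g$ is being compared not to a tangent line but to a parabola, which the concave/convex structure is designed to handle.

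For the ``moreover'' monotonicity claims I would first simplify via $\tfrac{\sinh kt}{k\,e^{kt}}=\tfrac{1-e^{-2kt}}{2k}$ and $\tfrac{\sinh kt}{k\,e^{-kt}}=\tfrac{e^{2kt}-1}{2k}$, which recast the coefficients as $J_{\min}=\tfrac{2kt-1+e^{-2kt}}{4kt^2}$ and $J_{\max}=\tfrac{e^{2kt}-1-2kt}{4kt^2}$. Positivity of $J_{\min}$ and nonnegativity of $J_{\max}$ are then immediate from $e^{y}>1+y$. Writing $x:=kt$ one has $J_{\min}=\tfrac{1}{2t}q(x)$ and $J_{\max}=\tfrac{1}{2t}P(x)$ with $q(x)=\tfrac{2x-1+e^{-2x}}{2x}$ and $P(x)=\tfrac{e^{2x}-1-2x}{2x}$, so monotonicity in $k$ (fixed $t$) reduces to monotonicity of $q$ and $P$, and monotonicity in $t$ (fixed $k$) reduces to monotonicity of $q(x)/x$ and $P(x)/x$. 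Each of these four statements reduces to the sign of an explicit auxiliary function of $x$ that vanishes together with its first derivative at $x=0$ and whose second derivative is a positive multiple of $xe^{\pm 2x}$, so two differentiations settle the sign on $(0,\infty)$. The main obstacle is therefore not conceptual but organisational: it is the bookkeeping of these six positivity/monotonicity verifications, all of which follow the same ``differentiate twice and invoke $e^{y}>1+y$'' template.
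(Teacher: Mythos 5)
Your proof is correct, but it takes a genuinely different route from the paper's. The paper makes the same reduction to one variable $\phi=d_{\text{rad}}/d_{\text{tot}}\in[-1,1]$, but instead of comparing $g(\phi)=\tfrac1k\log(\cosh kt+\phi\sinh kt)-\phi t$ (with $t=d_{\text{tot}}$) against the two parabolas, it studies the ratio $G(\phi)=g(\phi)\,(1-\phi^2)^{-1}$ directly: it asserts, as ``lengthy but elementary'', that $G$ is decreasing on $[-1,1]$, and computes its limits at $\phi=\pm1$ (by L'H\^opital, which produces exactly your endpoint-tangency values $-\tfrac12 g'(1)=J_{\min}t^2$ and $\tfrac12 g'(-1)=J_{\max}t^2$); the two bounds then follow from $\lim_{\phi\to1}G\le G(\phi)\le\lim_{\phi\to-1}G$. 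Your argument replaces the unproved monotonicity of the ratio by a fully explicit mechanism: $g(\pm1)=0$, $g''(\phi)=-\sinh^2(kt)/\bigl(k(\cosh kt+\phi\sinh kt)^2\bigr)$ is negative and increasing in $\phi$, hence $h=g-J_{\min}t^2(1-\phi^2)$ and $\tilde h=J_{\max}t^2(1-\phi^2)-g$ are concave-then-convex (respectively convex-then-concave) with vanishing derivative at the tangent endpoint, and nonnegativity follows from your chord/monotonicity case analysis, which I checked is sound (including the constant-sign cases). What your route buys is verifiability: every step is a short computation, whereas the paper leaves its key claim (monotonicity of $G$, which is in fact a stronger statement than the two endpoint bounds you need) entirely to the reader. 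For the ``moreover'' part the two proofs are comparable in spirit --- the paper says ``direct check'', while you organise the check via $J_{\min}=\tfrac{1}{2t}q(kt)$, $J_{\max}=\tfrac{1}{2t}P(kt)$ and the ``differentiate twice and use $e^y>1+y$'' template; this does close every case, e.g.\ the claim that $J_{\min}$ decreases in $d_{\text{tot}}$ reduces to showing $N(u)=2-u-(u+2)e^{-u}\le0$ for $u>0$, which follows from $N(0)=N'(0)=0$ and $N''(u)=-ue^{-u}<0$.
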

\begin{proof}
As usual, let $\phi = d_{\text{rad}}/d_{\text{tot}}$, so that $\phi \in [-1,1]$. For fixed $k,d_{\text{tot}}$, consider the function
\[ G(\phi) := \left(\frac{1}{k} \log(\cosh(kd_{\text{tot}})+\phi \sinh(k d_{\text{tot}}))-\phi d_{\text{tot}} \right)(1-\phi^2)^{-1} \]
It is lengthy but elementary to check that $G$ is decreasing on $\phi \in [-1,1]$ and that its limits at $\phi=\pm 1$ are
\[ \frac{1}{2}\left( \pm d_{\text{tot}} \mp \frac{\sinh(k d_{\text{tot}})}{k(\cosh(kd_{\text{tot}}) \pm \sinh(kd_{\text{tot}}))}\right)
\]
the first part follows, and the remainder follows from a direct check.
\end{proof}

\begin{theorem}
\label{thm:ellipticimpliestransient}
Fix $k>0$. Let $X$ be a zero drift Markov chain on a manifold whose sectional curvature is at most $-k^2$ at every point. Suppose that Assumptions \ref{assumptionnegativecurvature}, \ref{assumptiondtotmoments} and \ref{assumptionnotconfined} are satisfied. Suppose also that there exist constants $D_{\text{min}}$ and $\epsilon>0$ such that 
\begin{equation}
\label{eq:sufficientforuniformlyelliptic}
    \mathbb{E}_x[d_{\text{tot}}^2 - d_{\text{rad}}^2] \geq \epsilon
\end{equation}
for every $x \in M$ such that $\text{Dist}_M(O,x) \geq D_{min}$. Then the chain is transient.
\end{theorem}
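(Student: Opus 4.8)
The plan is to apply the transience half of Theorem \ref{thm:mainresultB}, namely part (i), whose hypothesis asks that $r \inf_{x \in S(r)}\mathbb{E}_x[\log(\cosh(kd_{\text{tot}}) + \phi \sinh(kd_{\text{tot}}))]$ tend to infinity (note the curvature bound ``at most $-k^2$'' is exactly the magnitude-at-least-$k^2$ hypothesis of that theorem). Writing the integrand as $kF(k,d_{\text{rad}},d_{\text{tot}})$, it suffices to produce a constant $c>0$, independent of $x$, with $\mathbb{E}_x[F] \geq c$ for all $x$ sufficiently far from $O$; the quantity in question is then bounded below by $rkc \to \infty$.

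First I would record the single place where the zero-drift hypothesis enters. Since $d_{\text{rad}} = -\langle v_n, e_{\text{rad}}(X_n)\rangle$ and $e_{\text{rad}}(X_n)$ is $\mathcal{F}_n$-measurable, the condition $\mathbb{E}[v_n \mid \mathcal{F}_n]=0$ gives $\mathbb{E}_x[d_{\text{rad}}] = 0$ for every $x \neq O$. Feeding this into the lower bound of Lemma \ref{lemma:fgrowthestimate} and taking expectations yields
\begin{equation*}
\mathbb{E}_x[F] \geq \mathbb{E}_x[d_{\text{rad}}] + \mathbb{E}_x[J_{min}(k,d_{\text{tot}})(d_{\text{tot}}^2 - d_{\text{rad}}^2)] = \mathbb{E}_x[J_{min}(k,d_{\text{tot}})(d_{\text{tot}}^2 - d_{\text{rad}}^2)].
\end{equation*}
The integrand here is nonnegative because $J_{min}>0$ and $d_{\text{tot}} \geq |d_{\text{rad}}|$.

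The difficulty is that $J_{min}(k,d_{\text{tot}})$ decays to $0$ as $d_{\text{tot}} \to \infty$, so one cannot simply bound it below by a positive constant: a rare large increment could in principle carry almost all of the mass of $d_{\text{tot}}^2 - d_{\text{rad}}^2$ while contributing little to $F$. I would control this by truncation. Fix a threshold $T$ and discard the event $\{d_{\text{tot}} > T\}$; since $J_{min}(k,\cdot)$ is decreasing (Lemma \ref{lemma:fgrowthestimate}) and the integrand is nonnegative,
\begin{equation*}
\mathbb{E}_x[F] \geq J_{min}(k,T)\,\mathbb{E}_x[(d_{\text{tot}}^2 - d_{\text{rad}}^2)1_{\{d_{\text{tot}} \leq T\}}].
\end{equation*}

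It then remains to bound the truncated second moment below, uniformly in $x$. The tail is controlled exactly as in the estimate for $Q_2$ in Proposition \ref{prop:incrementapproximation}: using $d_{\text{tot}}^2 - d_{\text{rad}}^2 \leq d_{\text{tot}}^2$ together with $d_{\text{tot}}^2 1_{\{d_{\text{tot}} > T\}} \leq T^{2-p}d_{\text{tot}}^p$ and Assumption \ref{assumptiondtotmoments}, one gets $\mathbb{E}_x[(d_{\text{tot}}^2 - d_{\text{rad}}^2)1_{\{d_{\text{tot}} > T\}}] \leq BT^{2-p}$. Because $p>2$, I may choose $T$ large enough (depending only on $k,B,p,\epsilon$, never on $x$) that $BT^{2-p} \leq \epsilon/2$; combined with the hypothesis $\mathbb{E}_x[d_{\text{tot}}^2 - d_{\text{rad}}^2] \geq \epsilon$ this gives $\mathbb{E}_x[(d_{\text{tot}}^2 - d_{\text{rad}}^2)1_{\{d_{\text{tot}} \leq T\}}] \geq \epsilon/2$ whenever $\text{Dist}_M(O,x) \geq D_{min}$. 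Hence $\mathbb{E}_x[F] \geq \tfrac{1}{2}\epsilon\, J_{min}(k,T) =: c > 0$ for all such $x$, and Theorem \ref{thm:mainresultB}(i) delivers transience. The only genuinely delicate point is this truncation step: the argument hinges on the super-quadratic moment $p>2$ rendering the decay of $J_{min}$ harmless, and it is precisely this interplay that the hypothesis \eqref{eq:sufficientforuniformlyelliptic} is designed to exploit.
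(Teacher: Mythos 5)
Your proposal is correct and takes essentially the same route as the paper's proof: zero drift kills the first-order term, the lower bound of Lemma \ref{lemma:fgrowthestimate} plus truncation at a fixed threshold (using that $J_{min}$ is positive and decreasing in $d_{\text{tot}}$) yields a uniform positive lower bound on $\mathbb{E}_x[F]$, and Theorem \ref{thm:mainresultB}(i) then gives transience. The only difference is cosmetic: for the tail term $\mathbb{E}_x[d_{\text{tot}}^2 1_{\{d_{\text{tot}}>T\}}]$ you use the direct bound $T^{2-p}\mathbb{E}_x[d_{\text{tot}}^p] \leq B T^{2-p}$, whereas the paper applies H\"older, Markov and Lyapunov to get $B^{1+\frac{1}{p\beta}}A^{-\frac{1}{\beta}}$; both vanish as the threshold grows, so either suffices.
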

\begin{proof}
Let $c=\cosh(k d_{\text{tot}}), s=\sinh(k d_{\text{tot}})$ and $\phi=d_{\text{rad}}/d_{\text{tot}}$. Let $A$ be a constant to be chosen later. Then
\begin{align*}
    \frac{1}{k} \mathbb{E}_x[\log(c+\phi s)] &\geq \mathbb{E}_x[d_{\text{rad}}] +    \frac{1}{2} \mathbb{E}_x\left[\frac{1}{d_{\text{tot}}^2}\left( d_{\text{tot}}-\frac{s}{k(c+s)} \right)(d_{\text{tot}}^2-d_{\text{rad}}^2) \right] \\
    &\geq \frac{1}{2} \mathbb{E}_x\left[\frac{1}{d_{\text{tot}}^2}\left( d_{\text{tot}}-\frac{s}{k(c+s)} \right)(d_{\text{tot}}^2-d_{\text{rad}}^2)1_{d_{\text{tot}}<A} \right] \\
    &= \frac{1}{2} \mathbb{E}_x\left[\left( \frac{1}{d_{\text{tot}}} - \frac{1}{2 k d_{\text{tot}}^2}(1-e^{-2kd_{\text{tot}}}) \right)(d_{\text{tot}}^2-d_{\text{rad}}^2)1_{d_{\text{tot}}<A} \right] \\
    &\geq \frac{1}{2} \mathbb{E}_x\left[\left( \frac{1}{A} - \frac{1}{2 k A^2}(1-e^{-2kA}) \right)(d_{\text{tot}}^2-d_{\text{rad}}^2)1_{d_{\text{tot}}<A} \right],
\end{align*}
where the last line follows from the fact that $\frac{1}{d_{\text{tot}}}-\frac{1}{2 k d_{\text{tot}}^2}(1-e^{-2kd_{\text{tot}}})$ is positive and decreasing in $d_{\text{tot}}$. Therefore there is a constant $A_0$, depending only on $k$, such that if $A>A_0$ and $\text{Dist}_M(O,x) \geq D_{\text{min}}$, then
\begin{align*}
    \frac{1}{k} \mathbb{E}_x[\log(c+\phi s)] &\geq  
    \frac{1}{4A} \mathbb{E}_x\left[(d_{\text{tot}}^2-d_{\text{rad}}^2)1_{d_{\text{tot}}<A} \right] \\
    &= \frac{1}{4A} \left( \mathbb{E}_x[d_{\text{tot}}^2-d_{\text{rad}}^2] - \mathbb{E}_x[(d_{\text{tot}}^2-d_{\text{rad}}^2) 1_{d_{\text{tot}}>A}] \right) \\
     &\geq \frac{1}{4A} \left( \epsilon - \mathbb{E}_x[d_{\text{tot}}^2 1_{d_{\text{tot}}>A}] \right)
\end{align*}

If $\alpha=\frac{p}{2}$ and $\beta=\frac{p}{p-2}$, then $\frac{1}{\alpha}+\frac{1}{\beta}=1$. Applying first H\"{o}lder's inequality, then Assumption \ref{assumptiondtotmoments}, then Markov's inequality and finally Lyapunov's inequality, we obtain
\begin{align*}
    \mathbb{E}_x[d_{\text{tot}}^2 1_{d_{\text{tot}}>A}] &\leq \mathbb{E}_x[d_{\text{tot}}^p]^{\frac{1}{\alpha}} \mathbb{P}_x[d_{\text{tot}} > A]^{\frac{1}{\beta}}  \\
    &\leq B \: \mathbb{P}_x[d_{\text{tot}} > A]^{\frac{1}{\beta}} \\
    &\leq B \left( \frac{\mathbb{E}_x[d_{\text{tot}}]}{A} \right)^{\frac{1}{\beta}} \\
    &\leq B^{1+\frac{1}{p\beta}} A^{-\frac{1}{\beta}}
\end{align*}
Choose $A$ sufficiently large that $B^{1+\frac{1}{p\beta}} A^{-\frac{1}{\beta}} \leq \frac{\epsilon}{2}$. This then gives
\begin{equation*}
    \frac{1}{k} \mathbb{E}_x[\log(c+\phi s)] \geq \frac{\epsilon}{2}
\end{equation*}
which implies transience by Theorem \ref{thm:mainresultB}. 
\end{proof}

\begin{proof}[Proof of Theorem \ref{corol:uniformlyelliptic}]
We need only check that If $X$ is uniformly elliptic, then
\eqref{eq:sufficientforuniformlyelliptic} automatically holds. To see this
choose an orthonormal basis for $T_p M$ of the form $\mathcal{B}_p = \{ \textbf{e}_1 = \textbf{e}_{rad}, \textbf{e}_2, \dots, \textbf{e}_d \}$ for each $p \in M$. If $X_n=x$ and $v = \exp^{-1}_{X_n}(X_{n+1})$, then
$v$ may be written uniquely in the form $v=\sum \lambda_i \textbf{e}_i$. If $X$ is uniformly elliptic then $\mathbb{P}[|\lambda_i| \geq \epsilon] \geq \epsilon$ and hence $\mathbb{E}[\lambda_i^2] \geq \epsilon^3$ for each $i$. But \[ \mathbb{E}_x[d_{\text{tot}}^2-d_{\text{rad}}^2] = \sum_{i=2}^d \lambda_i^2 \geq (d-1)\epsilon^3 \]
This completes the proof.
\end{proof}

\noindent In Euclidean space, under our assumptions, if a zero-drift chain satisfies
\begin{equation*}
   \lim_{r \rightarrow \infty} \: r \sup_{x \in S(r)} \mathbb{E}_x[ d_{\text{tot}}^2-d_{\text{rad}}^2]  = 0
\end{equation*}
then it is recurrent. We now show that this fails in hyperbolic space, for any polynomial growth factor.

\begin{proposition}
\label{prop:stillnotrecurrent}
There is a zero-drift transient chain in the hyperbolic plane such that, for every positive integer $N$,
\begin{equation*}
   \lim_{r \rightarrow \infty} \: r^N \sup_{x \in S(r)} \mathbb{E}_x[ d_{\text{tot}}^2-d_{\text{rad}}^2]  = 0
\end{equation*}
\end{proposition}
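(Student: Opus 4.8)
The plan is to construct an explicit chain by prescribing, for each point $x$, the conditional distribution of the increment in terms of $d_{\text{tot}}$ and $d_{\text{rad}}$, exploiting the fact that in hyperbolic space $F$ grows essentially linearly in $d_{\text{rad}}$ but only quadratically in $(d_{\text{tot}}^2 - d_{\text{rad}}^2)$ with a coefficient $J_{\min}$ that is \emph{bounded below} (Lemma \ref{lemma:fgrowthestimate}). The key geometric fact I would use is that $J_{\min}(k, d_{\text{tot}})$ does not decay to zero as $d_{\text{tot}} \to 0$; rather it tends to a positive constant. This means that even if I force $\mathbb{E}_x[d_{\text{tot}}^2 - d_{\text{rad}}^2]$ to be extremely small, the contribution to $\mathbb{E}_x[F]$ can still be made to dominate via the sheer lever of the $\frac{1}{k}\log$ term, provided I route most of the ``variance budget'' into increments with small $d_{\text{tot}}$ where $J_{\min}$ is largest.

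\medskip\noindent\textbf{Step 1 (Design the increments).} For a point $x$ at radius $r = \text{Dist}_M(O,x)$, I would take $d_{\text{tot}}$ to be a small deterministic value $h = h(r)$ depending on $r$, and let $\phi = d_{\text{rad}}/d_{\text{tot}}$ take a symmetric two-point distribution, say $\phi = \pm 1$ with equal probability, or more carefully a distribution with $\mathbb{E}_x[\phi] = 0$ (to enforce zero drift via $\mathbb{E}_x[d_{\text{rad}}] = 0$) but with $\mathbb{E}_x[\phi^2] = 1 - \rho(r)$ for a tiny $\rho(r) > 0$. Then $\mathbb{E}_x[d_{\text{tot}}^2 - d_{\text{rad}}^2] = h^2\,\mathbb{E}_x[1 - \phi^2] = h^2 \rho(r)$, which I can drive to zero faster than any polynomial in $r$ by choosing $h(r)$ and $\rho(r)$ to decay (e.g. exponentially) in $r$. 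This secures the displayed limit condition for every $N$.

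\medskip\noindent\textbf{Step 2 (Verify transience via Theorem \ref{thm:mainresultB}(i)).} Using the lower bound in Lemma \ref{lemma:fgrowthestimate}, I have
\[
\frac{1}{k}\mathbb{E}_x[\log(c + \phi s)] = \mathbb{E}_x[F] \geq \mathbb{E}_x[d_{\text{rad}}] + J_{\min}(k,h)\,\mathbb{E}_x[d_{\text{tot}}^2 - d_{\text{rad}}^2] = J_{\min}(k,h)\, h^2 \rho(r),
\]
since $\mathbb{E}_x[d_{\text{rad}}] = 0$. The transience criterion requires $r\, \mathbb{E}_x[F] \to \infty$. The tension is that I need $h^2\rho$ \emph{small} to satisfy the proposition's hypothesis, yet $r\, h^2 \rho \to \infty$ for transience; these are compatible because transience only needs the product with $r$ to blow up, while the hypothesis needs the product with $r^N$ to vanish — and no single decay rate can do both. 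The resolution is that $J_{\min}(k,h)$ blows up as $h \to 0$: a short computation shows $J_{\min}(k,h) \sim \tfrac{k}{2} \cdot \tfrac{1}{h}$-type behaviour, or at any rate that $J_{\min}(k,h) \to +\infty$ as $h \to 0^+$ because the bracket $d_{\text{tot}} - \frac{\sinh}{k(\cosh+\sinh)}$ divided by $2d_{\text{tot}}^2$ has a pole. I would therefore choose $h(r) \to 0$ slowly and $\rho(r)$ decaying super-polynomially, balancing so that $r\, J_{\min}(k,h(r))\, h(r)^2 \rho(r) \to \infty$ while $r^N h(r)^2 \rho(r) \to 0$ for all $N$; the extra growth of $J_{\min}$ as $h\to 0$ is exactly the slack that makes both achievable.

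\medskip\noindent\textbf{The main obstacle} is the precise asymptotic analysis of $J_{\min}(k,h)$ as $h \to 0$, since the entire construction hinges on its rate of blow-up relative to $h^2$: I must confirm that $J_{\min}(k,h)\, h^2$ tends to a finite positive limit (so it behaves like $\asymp h$ in fact, tending to $0$) or determine its exact order, because this fixes how I must trade off $h(r)$ against $\rho(r)$. A careful Taylor expansion of $\sinh, \cosh$ about $h=0$ gives $d_{\text{tot}} - \frac{\sinh(kh)}{k(\cosh(kh)+\sinh(kh))} = h - h(1 - kh + O(h^2)) = kh^2 + O(h^3)$, so $J_{\min}(k,h) \to k/2$ as $h \to 0$, meaning $J_{\min}$ is in fact bounded. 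This forces me to reconsider: with $J_{\min}$ bounded, $\mathbb{E}_x[F] \asymp h^2\rho$ and I cannot get $r\,\mathbb{E}_x[F]\to\infty$ while $r^N h^2\rho \to 0$. The correct construction must therefore \emph{not} send $d_{\text{tot}} \to 0$; instead I would hold $d_{\text{tot}}$ bounded below and route the smallness of $d_{\text{tot}}^2 - d_{\text{rad}}^2$ into a distribution of $\phi$ concentrating near $\pm 1$, while placing a tiny probability mass on a \emph{large} excursion of $d_{\text{tot}}$ that contributes to $\mathbb{E}_x[F]$ through the exponential growth $F \approx d_{\text{tot}}$ for large $d_{\text{tot}}$ — a rare large jump contributes $\Theta(1)$ to $\mathbb{E}_x[F]$ while contributing only $\Theta(p\text{-th moment budget})$ to $d_{\text{tot}}^2 - d_{\text{rad}}^2$. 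Balancing the rare-jump probability against Assumption \ref{assumptiondtotmoments} (the $p$-th moment bound) and against the polynomial decay requirement is where the real work lies.
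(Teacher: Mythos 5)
Your self-correction in the ``main obstacle'' paragraph is sound: $J_{\min}(k,h)\to k/2$ as $h\to 0^{+}$, so the small-increment scheme of Steps 1--2 collapses, and your pivot --- keep $d_{\text{tot}}$ bounded below, concentrate $\phi$ near $\pm 1$, use rare large jumps --- is indeed the direction the paper takes (it uses a power-law density $f(y)=(m-1)y^{-m}$ on $[1,\infty)$, $m>3$, for $d_{\text{tot}}$ and a two-point law for $\phi$). But the sketch you end with has a genuine gap: your accounting ``a rare large jump contributes $\Theta(1)$ to $\mathbb{E}_x[F]$ while contributing only $\Theta(p\text{-th moment budget})$ to $d_{\text{tot}}^2-d_{\text{rad}}^2$'' ignores the zero-drift constraint, which is the entire difficulty. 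Zero drift forces $\mathbb{E}_x[\phi\, d_{\text{tot}}]=0$, so outward mass at $\phi=+1$ (where $F=y$) must be balanced by inward mass. If the inward mass sits at $\phi=-1$ exactly, its contribution $F=-y$ cancels the gain \emph{exactly}; if instead it has any fixed spread away from $\pm 1$, a jump of size $\lambda$ with probability $q$ costs $\Theta(q\lambda^2)$ in defect against an $F$-gain of $O(q\lambda)$, and then $r\,q\lambda\to\infty$ forces $r^2q\lambda^2\to\infty$, so your hypothesis already fails at $N=2$. In fact the failure is structural: for a zero-drift chain the upper bound of Lemma \ref{lemma:fgrowthestimate} gives $\mathbb{E}_x[F]\leq \mathbb{E}_x[J_{\max}(k,d_{\text{tot}})(d_{\text{tot}}^2-d_{\text{rad}}^2)]$, and since $J_{\max}(k,y)\sim e^{2ky}/(4ky^2)$, the only way to have $r\,\mathbb{E}_x[F]\to\infty$ while the defect decays super-polynomially is to carry the defect on jumps large enough that the \emph{exponential} blow-up of $J_{\max}$ does the separating. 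The linear growth $F\approx d_{\text{tot}}$ that you invoke cannot do it.

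The missing idea, which is the heart of the paper's proof, is the exponential sensitivity of $F$ in $\phi$ near $-1$ for large jumps. For jumps $y\geq\lambda(r)$ the paper places the inward mass not at $\phi=-1$ but at $\phi=-1+\epsilon(y)$ with $\epsilon(y)=(1-\cosh y+\sinh y)/\sinh y\approx 2e^{-y}$, chosen so that $\cosh y+\phi\sinh y=1$ exactly, i.e.\ $F=0$ on the inward branch: the inward contribution to $\mathbb{E}_x[F]$ jumps from $-y$ to $0$ at a defect cost of only $y^{2}\epsilon(y)\approx 2y^{2}e^{-y}$, exponentially small in $y$. The branch probabilities are then tuned ($\alpha(y)=(1-\epsilon)/(2-\epsilon)$) to keep $\mathbb{E}[\phi\mid d_{\text{tot}}=y]=0$, leaving a net contribution $\alpha(y)\,y\approx y/2$ to $\mathbb{E}_x[F]$. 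Integrating against the power-law density with threshold $\lambda(r)=r^{1/(m-1)}$ yields
\begin{equation*}
\sup_{x\in S(r)}\mathbb{E}_x[d_{\text{tot}}^2-d_{\text{rad}}^2]\leq 2m\,e^{-\lambda(r)},
\qquad
\inf_{x\in S(r)}\mathbb{E}_x[F]\geq \frac{m-1}{4(m-2)}\,\lambda(r)^{-(m-2)},
\end{equation*}
so the defect decays faster than any polynomial while $r\,\mathbb{E}_x[F]\gtrsim r^{1/(m-1)}\to\infty$, giving transience by Theorem \ref{thm:mainresultA}. (One should also check Assumption \ref{assumptiondtotmoments}, which holds since $m>3$, and Assumption \ref{assumptionnotconfined}, which follows from Proposition \ref{prop:radialnondegeneracycriterion} since $\mathbb{E}_x[d_{\text{rad}}^2]$ is bounded below; your proposal addresses neither.) Your closing sentence defers exactly this balancing as ``where the real work lies''; since the balancing is impossible without the near-$(-1)$ neutralisation mechanism, the proposal as written is incomplete.
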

\begin{proof}
We give an example of such a chain. Take the probability density of $d_{\text{tot}}$, conditional on the chain being at $x \in M$, to be the same for every $x$ and given by
\begin{equation*}
     f_{tot}(y \mid x) = \frac{m-1}{y^m}; \hspace{5pt} 1 \leq y < \infty
\end{equation*}
where $m$ is a constant; it is necessary to choose $m>3$ in order for Assumption \ref{assumptiondtotmoments} to hold. For some function $\lambda(r)$ to be chosen later, let
\[ \epsilon(y)= \frac{1-\cosh(y)+\sinh(y)}{\sinh(y)} \cdot 1_{y \geq \lambda(r)}\]
and, conditional on $d_{\text{tot}}=y$ let $\phi:=\frac{d_{\text{rad}}}{d_{\text{tot}}}$ be distributed as
\[
\phi(\cdot \mid d_{\text{tot}}=y)=\begin{cases}
1 & \text{ with probability } \alpha(y) \\
-1+\epsilon(y) & \text{ with probability } 1-\alpha(y) \\
\end{cases}
\]

where
\[ \alpha(y) =  \frac{1-\epsilon(y)}{2-\epsilon(y)} = \begin{cases} \frac{1-\cosh(y)+\sinh(y)}{2} & \text{ if } y \geq \lambda(r) \\
\frac{1}{2} & \text{ otherwise } \end{cases} \]
Notice that $\epsilon$ and $\alpha$ depend upon the point $x \in M$ via its distance from $O$, although for brevity we omit this from our notation. One can check that $0 \leq \alpha(y) \leq 1$ for all $y \geq 1$, so that this definition makes sense. The choice of $\alpha$ ensures that $\mathbb{E}[\phi \mid d_{\text{tot}}=y]=0$ for all $y$, and hence that $\mathbb{E}_x[d_{\text{rad}}]=\mathbb{E}_x[\phi d_{tot}]=0$ for all $x \in M$. The choice of $\epsilon$ is made to simplify some of the forthcoming expectation calculations. Having specified the distributions of $d_{\text{rad}}$ and $d_{\text{tot}}$ it is straightforward to choose the transverse components to give a zero drift chain. We compute
\begin{align*}
     \mathbb{E}_x[d^2_{\text{tot}}(1-\phi^2) \mid d_{\text{tot}}=y] &= y^2 \: \mathbb{E}_x[1-\phi^2 \mid d_{\text{tot}}=y ] \\
     &= y^2\left(0+(1-\alpha)\left(1-(-1+\epsilon)^2\right) \right) \\
     &= y^2 \epsilon 
\end{align*}
From now on assume $\lambda(r) \geq 1$ for all $r$. Then
\begin{align*}
     \mathbb{E}_x[d_{\text{tot}}^2(1-\phi^2)] &= \int_{y=1}^\infty f(y) \: \mathbb{E}_x[d_{\text{tot}}^2(1-\phi^2) \mid d_{\text{tot}}=y] \: dy  \\
     &= \int_{y=\lambda}^\infty \frac{(m-1)}{\sinh(y)y^{m-2}}  (1-\cosh(y)+\sinh(y)) \: dy \\
     &\leq \int_{y=\lambda}^\infty \frac{2m}{e^y y^{m-2}} \: dy \\
     &\leq \int_{y=\lambda}^\infty 2m e^{-y} \: dy \\
     &= 2m e^{-\lambda} \: \: (*)
\end{align*}
On the other hand, letting $c=\cosh(y)$ and $s=\sinh(y)$, we find that, if $y \geq \lambda(r)$, then
\begin{align*}
     \mathbb{E}_x[\log(\cosh(d_{\text{tot}}) &+\phi \sinh(d_{\text{tot}})) \mid d_{\text{tot}}=y] = \mathbb{E}_x[\log(c+\phi s)] \\
     &=\alpha \log(c+s) + (1-\alpha) \times \log(1) \\
     &= y \alpha(y) 
\end{align*}
whilst if $y < \lambda(r)$ then $\mathbb{E}[\log(\cosh(d_{\text{tot}}) +\phi \sinh(d_{\text{tot}})) \mid d_{\text{tot}}=y] = 0$. So 
\begin{align*}
     \mathbb{E}_x[\log(\cosh(d_{\text{tot}})+\phi \sinh(d_{\text{tot}}))] &= \int_{y=\lambda}^\infty  y \: \alpha(y) \: f(y) \: dy \\
     &= \int_{y=\lambda}^\infty \frac{(m-1)(1+\sinh(y)-\cosh(y))}{2y^{m-1}} \: dy \\
     &\geq \int_{y=\lambda}^\infty \frac{m-1}{4y^{m-1}} \: dy \\
     &= \frac{m-1}{4(m-2)} \frac{1}{\lambda^{m-2}} \: \: (**)
\end{align*}
Choose $\lambda(r)=r^{\frac{1}{m-1}}$. Then $(*)$ tells us that $\sup_{x \in S(r)} \mathbb{E}_x[d^2_{\text{tot}}-d^2_{\text{rad}}]$ has the required rate of decay, and $(**)$, together with Theorem \ref{thm:mainresultA}, tells us that the chain is transient.
\end{proof}

\section{Examples}
\label{section:examples}

In this section, we generalise the `Elliptic Random Walk Model' found in Section 3 of \cite{georgiou2016anomalous} to radially symmetric manifolds of negative curvature. Let $V$ be a finite-dimensional inner product space of dimension $d$. Given $v \in V$ and $a,b>0$, define $L_V(a,b,v): V \rightarrow V$ to be the linear transformation that sends $v$ to $av \sqrt{d} $ and any  $w \in \langle v \rangle^\perp$ to $b w \sqrt{d}$. Define an elliptical measure
\[ \xi_V(a,b,v): \text{Borel}(V) \rightarrow \mathbb{R}_{\geq 0}  \]
by $\xi_V = \mu_V \circ L_V^{-1}(a,b,v)$, where $\mu_V$ is the uniform measure on the unit sphere in $V$. Thus $\xi_V$ is supported on an ellipsoid whose principal axes have lengths $a\sqrt{d},b\sqrt{d},b\sqrt{d},\dots,b\sqrt{d}$. Given a $d$-dimensional manifold $M$ with origin $O \in M$, and functions $a, b: M \rightarrow \mathbb{R}_{\geq 0}$, define a measure $\mu_p: \text{Borel}(M) \rightarrow \mathbb{R}$ at each point $p \in M$ by
\[ \mu_p = \xi_{T_p M}\left(a(p),b(p),e_{\text{rad}} \right) \circ exp_p^{-1} \]
where, if $p=O$, we temporarily define $e_{\text{rad}}(O)$ to be some fixed unit-length vector in $T_O M$ (as far as recurrence and transience is concerned, this choice is unimportant). This defines what we shall refer to as the \textit{elliptic Markov chain} with parameters $a$ and $b$. In the case where $a$ and $b$ are constant, and $M$ is Euclidean space, the elliptic Markov chain reduces to the example in Section 3 of \cite{georgiou2016anomalous}.

By choosing coordinates, we could write down multidimensional integrals for what Theorem \ref{thm:mainresultA} called $\underline{\nu}$ and $\overline{\nu}$. However, these integrals are somewhat complicated. Rather than attempt to evaluate them directly, we shall instead estimate them in terms of the second moments of $d_{\text{tot}}$ and $d_{\text{rad}}$. This will better enable comparison with the results in \cite{georgiou2016anomalous}.

We claim that
\begin{align*}
    \mathbb{E}_p[d_{\text{tot}}^2] &=  a(p)^2 + (d-1)b(p)^2, \\
    \mathbb{E}_p[d_{\text{rad}}^2] &= a(p)^2.
\end{align*}
To prove this, note that the computation in \cite[p. 7]{georgiou2016anomalous}, establishes this result when $V$ has the Euclidean inner product. The general result follows from the definition of $\xi_{V}$ together with the fact that any two inner product spaces of dimension $d$ whose inner products are positive definite are isometric.

For simplicity, we assume that both the chain and the underlying manifold are radially symmetric, meaning that the curvature tensor of $M$ at a point $p$ and the functions $a(p)$ and $b(p)$ depend only on the distance $r$ between $p$ and $O$. We further assume that there exists $\epsilon>0$ such that we have $a(r) \geq \epsilon$ for all $r$, and that $a$ and $b$ are bounded above. It then follows from Proposition \ref{prop:radialnondegeneracycriterion} that Assumption \ref{assumptionnotconfined} holds. Also, Assumption \ref{assumptiondtotmoments} holds, because we always have
\[ d_{\text{tot}} \leq d_{\max} := \sqrt{d} \max\left(\sup_{r \geq 0} a, \sup_{r \geq 0} b\right)\]
Further, define
\[ k_{\max}(r) = \sup \: \sqrt{\left|\mathcal{SC}(q, \Pi)\right|}, \: \: \: k_{\min}(r) = \inf \: \sqrt{\left|\mathcal{SC}(q, \Pi)\right|} \]
where $\mathcal{SC}$ is the sectional curvature and the suprema and infima are taken over all points $q \in M$ such that there exists $p \in S(r)$ with $\text{Dist}_M(p,q) \leq d_{\max}$ and all planes $\Pi \in T_q M$. We assume that $k_{\text{min}}$ and $k_{\text{max}}$ exist, are finite, and are everywhere strictly greater than zero. It follows, using Lemma \ref{lemma:fgrowthestimate}, that
\begin{align*}
    \underline{\nu}_1(r) &\leq \frac{1}{k_{\max}} \mathbb{E}_r [\log(\cosh(k_{\max} d_{\text{tot}})+\phi \sinh(k_{\max} d_{\text{tot}}))] \\
    &\leq \mathbb{E}_r[J_{\max}(k_{\max},d_{\text{tot}})(d_{\text{tot}}^2-d_{\text{rad}}^2)] \\
    &\leq J_{\max}(k_{\max},d_{\text{max}}) \mathbb{E}_r[d_{tot}^2-d_{rad}^2] \\
    &=J_{\max}(k_{\max},d_{\text{max}})(d-1)b^2(r)
\end{align*}
and similarly, $\overline{\nu}_1(r) \geq J_{\min}(k_{\min},d_{\max})(d-1)b^2(r)$.

We could estimate $\nu_2$ using Theorem \ref{thm:mainresultB}(ii), but it is simpler to observe that
\begin{equation*}
   \mathbb{E}_p[d_{\text{rad}}^2 \: 1_{d_{\text{rad}}>0}] \leq  \mathbb{E}_p[F(k,d_{\text{rad}},d_{\text{tot}})^2] \leq   \mathbb{E}_p[d_{\text{tot}}^2]
\end{equation*}
and hence, by symmetry,
\begin{equation*}
   \frac{1}{2}a^2(r) \leq \underline{\nu}_2(r) \leq \overline{\nu}_2(r) \leq  a^2(r) + (d-1)b^2(r)
\end{equation*}
It follows from Theorem \ref{thm:mainresultA} and the comparison theorem that
\begin{corollary}
\label{corol:ellipticexample}
The elliptic Markov chain with parameters $a(r)$ and $b(r)$ on a radially symmetric manifold is: 
\begin{enumerate}[(i)]
    \item Transient if \[ \lim_{r \rightarrow \infty}\left( 2r J_{\min}(k_{\min},d_{\max}) (d-1)b^2-a^2-(d-1)b^2 \right) > 0 \]
    \item Recurrent if there exists $\epsilon>0$ such that 
    \[ 2r J_{\max}(k_{\max},d_{\max}) (d-1)b^2 \leq \frac{1}{2}\left(1+\frac{1-\epsilon}{\log r} \right)a^2 \]
    for all sufficiently large $r$.
\end{enumerate}
\end{corollary}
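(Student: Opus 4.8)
The plan is to produce the four Lamperti bounds demanded by Theorem \ref{lyapunov} (the engine behind Theorem \ref{thm:mainresultA}) for the radial process of the elliptic chain, and then to observe that the two displayed hypotheses of the corollary are exactly the transience and recurrence inequalities these bounds require. Throughout I would exploit two facts recorded just before the statement: the elliptic measure $\xi_{T_pM}$ is the pushforward of the uniform measure on a sphere under a linear map, hence symmetric about $0$, so the chain is zero drift ($\mathbb{E}_x[d_{\text{rad}}]=0$ everywhere) and moreover $\mathbb{E}_x[d_{\text{rad}}^2 1_{d_{\text{rad}}>0}] = \tfrac12\mathbb{E}_x[d_{\text{rad}}^2] = \tfrac12 a^2(r)$; and the moment identities $\mathbb{E}_x[d_{\text{tot}}^2]=a^2+(d-1)b^2$, $\mathbb{E}_x[d_{\text{rad}}^2]=a^2$. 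Assumptions \ref{assumptiondtotmoments} and \ref{assumptionnotconfined} were already verified (via $d_{\text{tot}}\leq d_{\max}$ and Proposition \ref{prop:radialnondegeneracycriterion}), so I would take them as given.

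For the second moment I would avoid the cumbersome second half of Proposition \ref{prop:rauchconsequence} in favour of elementary bounds. The triangle inequality gives $(\Delta R)^2 \leq d_{\text{tot}}^2$ exactly, so $\overline{\mu}_2(r) = a^2(r)+(d-1)b^2(r)$, which is bounded because $a,b$ are; hence $\limsup_r \overline{\mu}_2 < \infty$. For the lower bound I would compare $M$ with $\mathbb{R}^d$ through Theorem \ref{thm:rauch} (using that $0$ dominates the curvature of $M$) to get $\Delta R \geq \Delta R^{\mathrm{Euc}} \geq d_{\text{rad}}$, the last step being Proposition \ref{prop:radialsubmartingale}. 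On $\{d_{\text{rad}}>0\}$ this forces $(\Delta R)^2 \geq d_{\text{rad}}^2$, so taking expectations and using the symmetry above yields $\underline{\mu}_2(r) = \tfrac12 a^2(r)$; since $a(r)\geq \epsilon$ we obtain $\liminf_r \underline{\mu}_2 \geq \tfrac12\epsilon^2 > 0$.

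For the first moment I would run the Rauch comparison in both directions on the $d_{\max}$-neighbourhood of $S(r)$, where the curvature lies in $[-k_{\max}(r)^2,-k_{\min}(r)^2]$. Comparison with constant curvature $-k_{\min}(r)^2$ gives $\mathbb{E}_x[\Delta R] \geq \mathbb{E}_x[F(k_{\min},d_{\text{rad}},d_{\text{tot}})] + O(r^{1-p})$; Lemma \ref{lemma:fgrowthestimate}, the vanishing of $\mathbb{E}_x[d_{\text{rad}}]$, and the fact that $J_{\min}$ is decreasing in $d_{\text{tot}}$ (so $J_{\min}(k_{\min},d_{\text{tot}}) \geq J_{\min}(k_{\min},d_{\max})$ on $d_{\text{tot}}\leq d_{\max}$) then bound this below by $J_{\min}(k_{\min},d_{\max})(d-1)b^2(r)$, giving $\underline{\mu}_1$ up to the $O(r^{1-p})$ error. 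Comparison with $-k_{\max}(r)^2$, together with $J_{\max}$ being increasing in $d_{\text{tot}}$, symmetrically gives $\overline{\mu}_1(r) = J_{\max}(k_{\max},d_{\max})(d-1)b^2(r) + O(r^{1-p})$.

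Feeding these into Theorem \ref{lyapunov} finishes the argument. In branch (i), $2r\underline{\mu}_1 - \overline{\mu}_2$ equals $2rJ_{\min}(k_{\min},d_{\max})(d-1)b^2 - a^2 - (d-1)b^2$ plus $2r\cdot O(r^{1-p}) = O(r^{2-p}) \to 0$, so the corollary's first hypothesis yields $\liminf(2r\underline{\mu}_1-\overline{\mu}_2) > 0$; with $\limsup\overline{\mu}_2<\infty$ this is the transience criterion. In branch (ii), the corollary's inequality is exactly $2r\overline{\mu}_1(r) \leq (1+\tfrac{1-\epsilon}{\log r})\underline{\mu}_2(r)$ after absorbing the $O(r^{2-p})$ error, which is negligible precisely because $\liminf \underline{\mu}_2 > 0$ — the same bookkeeping as in the proof of Theorem \ref{thm:mainresultA} — so the recurrence criterion applies. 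The one point I would treat with care is the orientation of the two comparisons: verifying that more negative curvature produces the larger radial displacement, so that $k_{\min}$ supplies the drift lower bound and $k_{\max}$ the upper bound, and that the monotonicities of $J_{\min}$ and $J_{\max}$ in $d_{\text{tot}}$ both point the way that survives replacing $d_{\text{tot}}$ by $d_{\max}$.
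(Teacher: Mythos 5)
Your proposal is correct and takes essentially the same route as the paper: the same moment identities and standing assumptions, the same first-moment bounds obtained from the two-sided curvature comparison over the $d_{\max}$-neighbourhood of $S(r)$ combined with Lemma \ref{lemma:fgrowthestimate}, zero drift, and the monotonicity of $J_{\min}$ and $J_{\max}$ in $d_{\text{tot}}$, the same second-moment bounds $\tfrac{1}{2}a^2 \leq \cdot \leq a^2+(d-1)b^2$ via symmetry of $d_{\text{rad}}$, and the same Lamperti bookkeeping for the $O(r^{2-p})$ errors. Your only deviation is minor (and arguably cleaner): you bound $\mathbb{E}_x[(\Delta R)^2]$ directly, using the triangle inequality for the upper bound and $\Delta R \geq d_{\text{rad}}$ (Euclidean comparison via Theorem \ref{thm:rauch} and Proposition \ref{prop:radialsubmartingale}) restricted to $\{d_{\text{rad}}>0\}$ for the lower bound, whereas the paper bounds the constant-curvature quantity $\mathbb{E}_x[F^2]$ by $\mathbb{E}_x[d_{\text{rad}}^2 1_{d_{\text{rad}}>0}]$ and $\mathbb{E}_x[d_{\text{tot}}^2]$ and then appeals to the comparison theorem.
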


A result of Azencott \cite{lenz2011random} shows that if $k_{min}(r) \geq C r^{2+\epsilon}$ for constants $C,\epsilon>0$ then $M$ is stochastically incomplete, so, by choosing $b(r)$ to decay fast enough that Corollary \ref{corol:ellipticexample}(ii) holds, we have found a recurrent chain on such a manifold, as promised.

Figure 2 gives shows numerical simulations of the hyperbolic elliptic random walk in dimension two for different choices of $a(r)$ and $b(r)$. In the first two examples, $a$ and $b$ are constant, and in the third $b(r) \rightarrow 0$ as $r \rightarrow \infty$. Only the third simulation shows recurrence, whilst if analogues of these chains were constructed in Euclidean space, both the second and the third would be recurrent.

\begin{figure}[]
\label{figure:threewalks}
\begin{center}
\includegraphics[width=10cm]{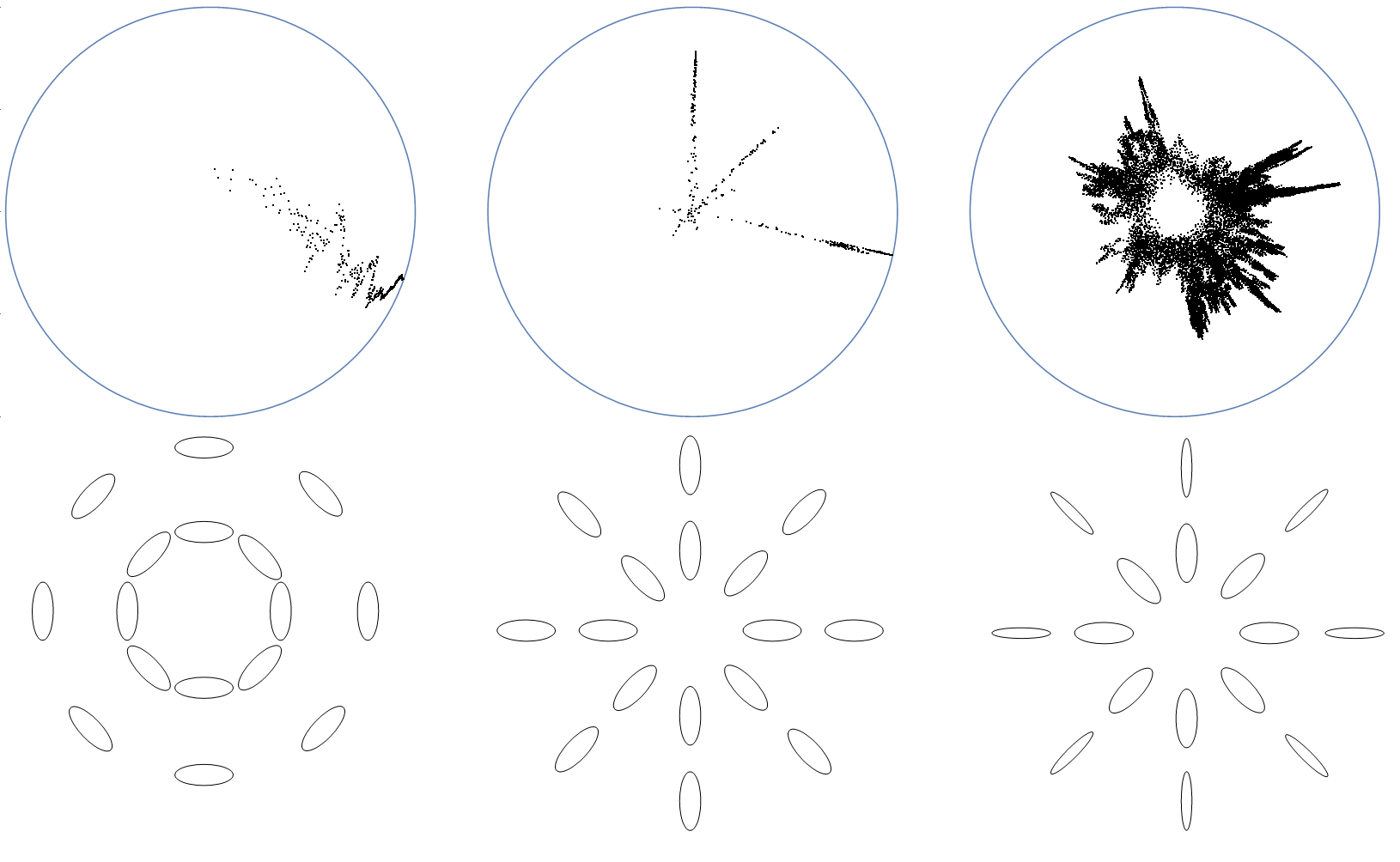}      
\end{center}
\caption{Upper row: Simulations of some elliptic Markov chains in the hyperbolic plane. Lower row: Schematic representations of these chains.}
\end{figure}


\appendix
\section{Computing the Radial Increment}
In the computation that follows, we use the Lorentz model of hyperbolic space, which we now describe. Consider $\mathbb{R}^{d+1}$ with Cartesian coordinates $(x_0,x_1,\dots,x_d)$, and denote by $B$ the Minkowski bilinear form
\[ B(\textbf{x},\textbf{y})=-x_0 y_0 + x_1y_1+x_2y_2+\dots+x_dy_d \]
For $k>0$, let $\mathcal{H}_k$ be the hyperboloid
\[ \mathcal{H}_k=\{ \textbf{x} \in \mathbb{R}^{d+1} : B(\textbf{x},\textbf{x})=-\frac{1}{k^2} \text{ and } x_0>0  \} \]
Given a point $\textbf{x} \in \mathcal{H}_k$,  define the \textit{tangent space} $T_\textbf{x}\mathcal{H}_k$ by
\[ T_\textbf{x}\mathcal{H}_k = \{ \textbf{y} \in \mathbb{R}^{d+1}: B(\textbf{x},\textbf{y})=0 \} \]
This is a $d$-dimensional real vector space. Although $B$ is not positive definite, its restriction to $T_\textbf{x}\mathcal{H}_k$ is, and it can be shown that $B$ makes $\mathcal{H}_k$ into a complete Riemannian manifold of constant curvature $-k^2$. The Riemannian distance between $\textbf{x},\textbf{y} \in  \mathcal{H}_k$ is
\begin{equation} \label{lorentzdistance}
    \text{Dist}_{\mathcal{H}}(\textbf{x},\textbf{y})=\frac{1}{k} \text{arccosh}(-B(\textbf{x},\textbf{y})k^2)
\end{equation}
and the exponential map $\exp_\textbf{x}: T_\textbf{x}\mathcal{H}_k \rightarrow \mathcal{H}_k$ is given by
\begin{equation} \label{lorentzexp}
    \exp_{\textbf{x}}(\textbf{v})=\cosh(k||\textbf{v}||)\textbf{x}+\frac{\sinh(k||\textbf{v}||)}{k||\textbf{v}||}\textbf{v}
\end{equation}
where $||\textbf{v}||=\sqrt{B(\textbf{v},\textbf{v})}$, and \eqref{lorentzexp} is understood in the limiting sense if $\textbf{v}=\textbf{0}$. The facts stated above are well known (although they are usually stated only for $k=1$); see for example \cite{nickel2018learning}. 

\begin{proof}[Proof of Proposition \ref{prop:incrementexact}]
After applying an isometry, we may assume that $O=\left( \frac{1}{k},0,0,\dots,0 \right)$ is a particular point, and that $p_n$ lies on the half-geodesic $t \mapsto \gamma(t)$ given by
\[ \gamma(t) = \exp_O\left( t \: (0,1,0,\dots,0)^T\right) = \left(\frac{1}{k} \cosh(t k), \frac{1}{k} \sinh(t k), 0, \dots,0\right)^T,  \]
emanating from $O$. Note that $\gamma$, as defined above, is a unit-speed geodesic because $(0,1,0,\dots,0)^T$ is a unit-length vector in $T_O M$. Let $\tau>0$ be such that $p_n = \gamma(\tau)$. Then, in $T_{p_n}M$,
\[ \textbf{e}_{\text{rad}} = - \frac{d}{dt}\bigg|_{t=\tau} \gamma(t) = \left( -\sinh(\tau k), -\cosh(\tau k), 0, \dots, 0 \right)^T.  \]
Observe that $\mathcal{B}=\{\textbf{e}_{\text{rad}}, \textbf{e}_{2}, \dots, \textbf{e}_{n} \}$ is an orthonormal basis of $T_{p_n} M$, where $\textbf{e}_i$ is a vector equal to 1 at the $(j+1)^{\text{th}}$ place and 0 elsewhere. Let $\textbf{w}=\exp_{p_n}^{-1}(p_{n+1})$, and suppose that the representation of $\textbf{w}$ with respect to $\mathcal{B}$ is 
\[ \textbf{w} = h_{\text{rad}} \textbf{e}_{\text{rad}} + h_2 \textbf{e}_2 + \dots + h_n \textbf{e}_n \]
by orthonormality of $\mathcal{B}$, $d_\text{rad}=-h_{\text{rad}}$, and $d_{\text{tot}}=\sqrt{h_{\text{rad}}^2+h_2^2+\dots+h_n^2}$. It remains to compute the position of $p_{n+1}$, and hence its distance from $O$.
\begin{align*}
    p_{n+1} &= \exp_{p_n} \textbf{w} \\
    &= \cosh(k d_{\text{tot}}) \begin{bmatrix} k^{-1} \cosh(k\tau ) \\ k^{-1} \sinh(k \tau ) \\ 0 \\ \vdots \\ 0 \end{bmatrix} + \frac{\sinh k d_{\text{tot}}}{kd_{\text{tot}}} \begin{bmatrix} d_{\text{rad}}\sinh(k \tau ) \\ d_{\text{rad}}\cosh(k \tau ) \\ h_2 \\ \vdots \\ h_n \end{bmatrix}
\end{align*}
And therefore
\[ B(p_n, p_{n+1}) = -\frac{1}{k^2} \left( \cosh(kd_{\text{tot}} ) \cosh(k \tau) + \frac{d_{\text{rad}}}{d_{\text{tot}}} \sinh(kd_{\text{tot}} ) \sinh(k \tau) \right), \]
so applying \eqref{lorentzdistance} yields the result.
\end{proof}

\section{The Usual Definition of Recurrence}

We sketch how to modify the example in Section \ref{section:examples} to give a chain which we can prove is recurrent in the usual sense, meaning that if $N$ is an open neighbourhood of $M$ then $X_n \in N$ infinitely often almost surely. Our argument is heavily based on \cite[Example 2.3.20]{menshikov2016non}, which appeals to the following extension of the Borel--Cantelli lemmata, due to L\'{e}vy. 
\begin{theorem}
Let $(\mathcal{F}_n)_{n \in \mathbb{N}}$ be a filtration and $(E_n)_{n \in \mathbb{N}}$ a sequence of events with $E_n \in \mathcal{F}_n$. Then, upto sets of probability zero,
\begin{equation}
\label{eq:levy}
    \{ \omega \in \Omega: E_n \text{ infinitely often } \} 
    = \{ \omega \in \Omega: \sum \mathbb{P}[E_n \mid \mathcal{F}_{n-1}]=\infty \}
\end{equation}
\end{theorem}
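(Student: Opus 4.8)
The plan is to realise both events as the divergence loci of two nondecreasing processes and to separate them using the martingale convergence theorem. Writing $S_n = \sum_{k=1}^n \mathbf{1}_{E_k}$ and $T_n = \sum_{k=1}^n \mathbb{P}[E_k \mid \mathcal{F}_{k-1}]$, both are nondecreasing, so each admits a limit $S_\infty, T_\infty \in [0,\infty]$; the left-hand event in \eqref{eq:levy} is $\{S_\infty = \infty\}$ and the right-hand event is $\{T_\infty = \infty\}$. Since $E_k \in \mathcal{F}_k$ the indicator $\mathbf{1}_{E_k}$ is $\mathcal{F}_k$-measurable while $\mathbb{P}[E_k \mid \mathcal{F}_{k-1}]$ is $\mathcal{F}_{k-1}$-measurable, so $M_n := S_n - T_n$ is an $\mathcal{F}$-martingale with $M_0 = 0$ whose increments satisfy $|M_n - M_{n-1}| = |\mathbf{1}_{E_n} - \mathbb{P}[E_n \mid \mathcal{F}_{n-1}]| \leq 1$.

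The key structural fact I would establish is the dichotomy for martingales with bounded increments: almost surely, either $M_n$ converges to a finite limit, or both $\sup_n M_n = +\infty$ and $\inf_n M_n = -\infty$. To prove this, fix $c > 0$ and set $\tau_c = \inf\{n : M_n \leq -c\}$. The stopped process $M_{n \wedge \tau_c}$ is a martingale bounded below by $-c-1$ (the increments drop by at most $1$), so $M_{n \wedge \tau_c} + c + 1$ is a nonnegative supermartingale and hence converges almost surely. On $\{\inf_n M_n > -c\}$ we have $\tau_c = \infty$, so $M_n$ itself converges there; letting $c \to \infty$ shows $M_n$ converges on $\{\inf_n M_n > -\infty\}$. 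Applying the same argument to $-M$ gives convergence on $\{\sup_n M_n < +\infty\}$, and the complement of the union of these two events is exactly the oscillation set $\{\sup_n M_n = +\infty\} \cap \{\inf_n M_n = -\infty\}$.

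With the dichotomy in hand, the conclusion follows by a short case analysis. On the convergence event, $M_n = S_n - T_n$ is bounded, so $S_\infty = \infty$ if and only if $T_\infty = \infty$. On the oscillation event, $\sup_n(S_n - T_n) = +\infty$ together with $T_n \geq 0$ forces $S_\infty = \infty$, while $\inf_n(S_n - T_n) = -\infty$ together with $S_n \geq 0$ forces $T_\infty = \infty$; hence both events occur simultaneously. In either case the two events of \eqref{eq:levy} coincide up to a null set, which is the claim.

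The genuine content lies entirely in the bounded-increment dichotomy, and I expect the optional-stopping step to be the main obstacle: one must verify that stopping preserves the martingale property and produces a process bounded on one side, so that the nonnegative supermartingale convergence theorem applies. Everything after that is bookkeeping, using only that $S_n$ and $T_n$ are nonnegative and nondecreasing.
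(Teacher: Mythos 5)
Your proof is correct, but there is nothing in the paper to compare it against: the paper states this result in its appendix as a known theorem of L\'{e}vy (the conditional, or ``second'', Borel--Cantelli lemma) and cites it without proof, using it only as a tool in the recurrence argument of \cite[Example 2.3.20]{menshikov2016non}-style. What you have written is a complete, self-contained proof along the standard textbook lines: the martingale $M_n = S_n - T_n$ has increments $\mathbf{1}_{E_n} - \mathbb{P}[E_n \mid \mathcal{F}_{n-1}]$ bounded by $1$ in absolute value (both terms lie in $[0,1]$, and integrability is immediate since $|M_n| \leq n$); the stopped process $M_{n \wedge \tau_c}$ is again a martingale, bounded below by $-(c+1)$ because the single overshooting increment at $\tau_c$ loses at most $1$, so nonnegative supermartingale convergence applies; taking the union over integer $c$ gives convergence a.s.\ on $\{\inf_n M_n > -\infty\}$, and symmetrically on $\{\sup_n M_n < \infty\}$, which is exactly the bounded-increment dichotomy. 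Your closing case analysis is also sound: on the convergence event $S_\infty = \infty$ iff $T_\infty = \infty$ since their difference stays bounded, while on the oscillation event $S_n \geq S_n - T_n$ and $-T_n \leq S_n - T_n$ force both sums to diverge. The one step worth flagging as doing real work is precisely the one you flagged yourself, optional stopping; since $\tau_c$ is a stopping time and $n \wedge \tau_c$ is bounded, the stopped-martingale theorem applies with no integrability caveats. So the proposal would serve as a valid proof of the theorem the paper leaves uncited in detail.
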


The measures used in Section \ref{section:examples} are supported on elliptical shells in $\mathbb{R}^d$; for this section it is convenient to use solid shapes instead, so we proceed as follows. In each tangent space $T_p M$, extend $e_{\text{rad}}(p)$ to an orthonormal basis $\mathcal{B}(p)=\{e_{\text{rad}},e_{2},e_{3},\dots,e_{d} \}$. Let 
\[v_p = h_{1} e_{\text{rad}} + h_2 e_2 + \dots h_d e_d \]
be a random vector in $T_p M$ whose law is given by taking the $h_i$ to be independent and given by
\[ h_i \sim \begin{cases} \sqrt{3} \: \text{Uniform}[-a(r),a(r)] & \text{ if } i=1 \\ 
\sqrt{3} \: \text{Uniform}[-b(r),b(r)] & \text{ if } i=2,3,\dots,n \end{cases} \]
This gives a measure on each $T_p M$ and hence a Markov chain on $M$. Moreover, one can verify that $\mathbb{E}_p[d_{\text{tot}}^2]$ and $\mathbb{E}_p[d_{\text{rad}}]^2$ are the same as for the example in Section \ref{section:examples}. Let $a(r)$ and $b(r)$ be chosen so that \ref{corol:ellipticexample}(ii) holds. Then there is some neighbourhood $N_0$ of $O$ such that the chain visits $N_0$ infinitely often almost surely.

Now let $N$ be an arbitrary open neighbourhood of $M$. Consider the case where $N$ is not contained in $N_0$; the case where $N \subset N_0$ is similar. By shrinking $N$ if necessary, we may assume that $N$ is an open ball disjoint from $N_0$. For each point $x \in M$, the distribution of $X_{n+1}$ conditional on $X_{n}=x$, is supported on a compact set $S \subset M$, and, due to our choice of the distribution of the $h_i$, is dense in $S$. It follows that for each $x \in N_0$, there exist $m_x \in \mathbb{N}, \delta_x >0$ such that 
\[ \mathbb{P}[\tau_N \leq m \mid X_0 = x ] \geq \delta \]
Where $\tau_{N}:=\min\{n \in \mathbb{N}: X_n \in N \} $. Further, the boundedness of $N_0$ implies that $m$ and $\delta$ may be chosen so as not to depend on $x$. We have already shown that $X_n \in N_0$ i.o., and so we may define a sequence of stopping times $(\tau_n)_{n \in \mathbb{N}}$ by $\tau_1 = \tau_{N_0}$ and $\tau_{n+1} = \min \{ k \geq \tau_{n}+m : X_k \in N_0 \}$. In other words, $\tau_n$ is the $n^{th}$ return to $N_0$, except that we do not count returns to $N_0$ that are less than $m$ steps apart. Let $\mathcal{F}_n = \sigma(X_1,\dots,X_n), \mathcal{G}_n = \mathcal{F}_{\tau_n}$ and $E_n = \{ \min \{ n: X_n \in N \} \leq \tau_n \}$. Then $E_1 \subset E_2 \subset \dots$, and
\[ \mathbb{P}[E_{n+1} \mid \mathcal{G}_{n}] \geq \mathbb{P}[E_{n+1}(1_{E_n} + 1_{E_n^c} ) \mid \mathcal{G}_{n}] \geq 1_{E_n} + \delta \: 1_{E_n^c} \geq \min(1,\delta) \]  
Applying \eqref{eq:levy}, we see that $E_n$ i.o. almost surely. This proves that, regardless of where the chain is currently located, it will visit $N$ via $N_0$ in finite time. It will therefore do so infinitely often.

\bibliographystyle{plain}
\bibliography{references}

\begin{thebibliography}{10}

\bibitem{carmo1992riemannian}
Manfredo Perdig{\~a}o~do Carmo.
\newblock {\em Riemannian geometry}.
\newblock Birkh{\"a}user, 1992.

\bibitem{emery1991barycentre}
Michel {\'E}mery and Gabriel Mokobodzki.
\newblock Sur le barycentre d’une probabilit{\'e} dans une vari{\'e}t{\'e}.
\newblock In {\em S{\'e}minaire de probabilit{\'e}s XXV}, pages 220--233.
  Springer, 1991.

\bibitem{georgiou2016anomalous}
Nicholas Georgiou, Mikhail~V Menshikov, Aleksandar Mijatovi{\'c}, and Andrew~R
  Wade.
\newblock Anomalous recurrence properties of many-dimensional zero-drift random
  walks.
\newblock {\em Advances in Applied Probability}, 48(A):99--118, 2016.

\bibitem{grigor1999analytic}
Alexander Grigor’yan.
\newblock Analytic and geometric background of recurrence and non-explosion of
  the {B}rownian motion on {R}iemannian manifolds.
\newblock {\em Bulletin of the American Mathematical Society}, 36(2):135--249,
  1999.

\bibitem{hsu2002stochastic}
Elton~P Hsu.
\newblock {\em Stochastic analysis on manifolds}.
\newblock American Mathematical Society, 2002.

\bibitem{ichihara1982curvature}
Kanji Ichihara.
\newblock Curvature, geodesics and the {B}rownian motion on a {R}iemannian
  manifold ({I})—recurrence properties.
\newblock {\em Nagoya Mathematical Journal}, 87:101--114, 1982.

\bibitem{jorgensen1975central}
Erik J{\o}rgensen.
\newblock The central limit problem for geodesic random walks.
\newblock {\em Zeitschrift f{\"u}r Wahrscheinlichkeitstheorie und Verwandte
  Gebiete}, 32(1-2):1--64, 1975.

\bibitem{jost2012nonpositive}
J{\"u}rgen Jost.
\newblock {\em Nonpositive curvature: geometric and analytic aspects}.
\newblock Birkh{\"a}user, 2012.

\bibitem{kakutani1944131}
Shizuo Kakutani.
\newblock On {B}rownian motions in n-space.
\newblock {\em Proceedings of the Imperial Academy}, 20(9):648--652, 1944.

\bibitem{kraaij2019classical}
Richard~C Kraaij, Frank Redig, and Rik Versendaal.
\newblock Classical large deviation theorems on complete {R}iemannian
  manifolds.
\newblock {\em Stochastic Processes and their Applications},
  129(11):4294--4334, 2019.

\bibitem{lamperti1960criteria}
John Lamperti.
\newblock Criteria for the recurrence or transience of stochastic process
  ({I}).
\newblock {\em Journal of Mathematical Analysis and applications},
  1(3-4):314--330, 1960.

\bibitem{lenz2011random}
Daniel Lenz, Florian Sobieczky, and Wolfgang Woess.
\newblock {\em Random walks, boundaries and spectra}.
\newblock Springer Science \& Business Media, 2011.

\bibitem{menshikov2016non}
Mikhail Menshikov, Serguei Popov, and Andrew Wade.
\newblock {\em Non-homogeneous Random Walks: Lyapunov Function Methods for
  Near-Critical Stochastic Systems}, volume 209.
\newblock Cambridge University Press, 2016.

\bibitem{nickel2018learning}
Maximilian Nickel and Douwe Kiela.
\newblock Learning continuous hierarchies in the {L}orentz model of hyperbolic
  geometry.
\newblock {\em arXiv preprint arXiv:1806.03417}, 2018.

\bibitem{shiozawa2017escape}
Yuichi Shiozawa.
\newblock Escape rate of the {B}rownian motions on hyperbolic spaces.
\newblock {\em Proceedings of the Japan Academy, Series A, Mathematical
  Sciences}, 93(4):27--29, 2017.

\bibitem{sturm2002nonlinear}
Karl-Theodor Sturm.
\newblock Nonlinear martingale theory for processes with values in metric
  spaces of nonpositive curvature.
\newblock {\em The Annals of Probability}, 30(3):1195--1222, 2002.

\bibitem{williams1991probability}
David Williams.
\newblock {\em Probability with martingales}.
\newblock Cambridge university press, 1991.

\end{thebibliography}

\end{document}